\numberwithin{equation}{section} % numbering with sections
\newtheorem{theorem}{Theorem}
\newtheorem{definition}{Definition}
\newtheorem{lemma}{Lemma}
\newtheorem{corollary}{Corollary}
\newtheorem{proposition}{Proposition}
\theoremstyle{remark}
\newtheorem{remark}{Remark}[section]
\newtheorem{Assumption}{Assumption}
\def \suchthat {\ \big | \ }
\title[Higher regularity to fully nonlinear elliptic equations]{Higher regularity of solutions to fully nonlinear elliptic equations}
\author[T. M. Nascimento]{Thialita M. Nascimento}
\address{Department of Mathematics, Iowa State University, 396 Carver Hall, 50011, Ames, IA, USA}{}
\email{thnasc@iastate.edu}
\author[G. Sá]{Ginaldo Sá}
\address{Department of Mathematics, University of Central Florida, 32816, Orlan\-do-FL, USA}{}
\email{ginaldo.sa@knights.ucf.edu}
\author[A. Sobral]{Aelson Sobral}
\address{Department of Mathematics, Universidade Federal da Para\'iba, 58059-900, Jo\~ao Pessoa-PB, Brazil}{}
\email{aelson.sobral@academico.ufpb.br}
\author[E. V. Teixeira]{Eduardo V. Teixeira}
\address{Department of Mathematics, University of Central Florida, 32816, Orlan\-do-FL, USA}{}
\email{eduardo.teixeira@ucf.edu}
\subjclass{35B65, 35J60}
\begin{document}
\maketitle

\begin{abstract} 
We establish higher regularity properties of solutions to fully nonlinear elliptic equations at interior critical points. The key novelty of our estimates lies in the fact that they yield smoothness properties that go beyond the inherent regularity limitations dictated by the heterogeneity of the problem. We explore various scenarios, revealing a plethora of improved regularity estimates. Notably, depending on the model's parameters, we establish estimates that transcend the natural regularity regime of the model, from $C^{0,\alpha_0}$ to $C^{1,\alpha_1}$ and further to $C^{2,\alpha_2}$, with the potential for even higher estimates. 
%%%%%%%%%%%%%%%%%
\tableofcontents
%%%%%%%%%%%%%%%%%
\end{abstract}

\section{Introduction}
We investigate higher regularity properties at critical points of viscosity solutions to uniform elliptic partial differential equations in the form
\begin{equation}\label{maineq}
F(x, D^2u) = f(x,u,Du).
\end{equation}
Caffarelli's celebrated a priori interior regularity theory, \cite{Caff}, establishes that viscosity solutions of fully nonlinear, uniform elliptic partial differential equations,
\begin{equation}\label{simpleEq}
    F(D^2u) = g(x),
\end{equation}
with source term $g\in L^p$, for $p>n$, are locally of class $C^{1,\delta}$ for an exponent $\delta>0$ depending only on dimension, ellipticity constants, and $p$. Notably, for non-homogeneous equations as outlined in \eqref{simpleEq}, Caffarelli's regularity theorem is, at its core, an optimal result;  see for instance \cites{CaffH, DKM, PT, PST, S} and references therein for regularity in other function spaces.

Regarding regularity properties of viscosity solutions to \eqref{maineq}, as long  minimal smoothness estimates are available to assure $f(x,u, Du) =: g(x)$ is an $L^p$ function for some $p>n$, Caffarelli's interior regularity theory applies. Classical bootstrap arguments may be used in the case when $x\mapsto f(x, \cdot \, , \cdot)$ has more regularity. Yet, if only $L^p$ bounds are accessible, the prospect of improving the regularity of solutions becomes, in principle, unattainable---at least within the scope of local regularity theory.

In certain problems, however, achieving sharp (possibly improved) control over the growth of solutions along special regions or points becomes imperative to advance the program. For instance, this is a core issue in the theory of free boundary problems as well as in certain geometric problems. These issues serve as fundamental motivations for the new results presented in this manuscript. 

Equations of the more general form as in \eqref{maineq} boast a rich historical legacy, with a plethora of applications. The theory varies considerably based on the hypotheses made on source term $f(x,u,Du)$. Prime models for which the new results proven in this article apply are 2nd-order differential equations of the Hamilton-Jacobi type, as the ones treated, for instance, in \cites{CDLP, CV, DSN, SN1, GN}. Similarly, singular elliptic problems, as the ones studied in \cites{BD1, BDL, BPRT, DFQ1, DFQ2, I}, can be rewritten as in \eqref{maineq}, and thus our results apply to those models too. 

Remarkably, our findings yield a gain of regularity exactly where the inherent characteristics of those models manifest, viz. at the corresponding singularities of the model. This regularity gain would probably be counterintuitive if understood purely from a PDE viewpoint. Similar phenomena, stemming from markedly different considerations, have been previously observed in variational problems, see for instance \cites{ATU1, ATU2, Teix4, Teix2, Teix5}. In turn, the key innovation within the results presented in this paper lies in promoting higher regularity precisely where limited information about the behavior of the solution is available, due to singularities.
 
Our quest of obtaining improved interior regularity estimates for viscosity solutions of \eqref{maineq} starts by showing that if the source term has a persisting singularity of order $-n/p$ at an interior point $x_0$, then the regularity of any viscosity solution of \eqref{simpleEq} can never surpass a given critical threshold; i.e. $C^{1,\alpha(n,p)}$ regularity at $x_0$, for a sharp, explicit exponent $\alpha(n,p)>0$, see Proposition \ref{Prop sharpness}. Interestingly, the sharp exponent $\alpha(n,p)$ is the same arising from the Potential Theory, via quite different considerations. In particular, it is impossible to improve regularity at such points. Drawing an analogy with free boundary problems, it is pertinent to interpret the result from Proposition \ref{Prop sharpness} as a non-degeneracy estimate.

Even more remarkably, we show that quantitative, higher regularity estimates are available at critical points or at vanishing points: 
\begin{equation}\label{alt-defC}
    \mathcal{C}(u) := u^{-1}(0) \cup |Du|^{-1}(0) =: \mathcal{C}_0(u) \cup \mathcal{C}_1(u).
\end{equation}
Even higher estimates can be obtained at vanishing critical points:
$$
    \mathcal{C}(u) := \{ u(x) = |Du(x)| = 0 \} = \mathcal{C}_0(u) \cap \mathcal{C}_1(u).
$$
In fact, in our theorems, to be properly stated in the next section, we allow independent decay of $f$ with respect to $u$ and to $Du$, and such flexibility yields meaningful gains in all possible scenarios. In essence, we establish a framework where if an interior point $x_0$ is a critical point but not a vanishing point, one can simply allow the rate of decay concerning $u$ to be zero. Consequently, all results will undergo appropriate adjustments in accordance with their respective theses. Similarly if one is investigating an interior point within the zero level set that is not a critical point, our results give sharp regularity information by letting the gradient decay rate go to zero.

We further investigate pointwise second-order differentiability of solutions. More precisely, we obtain sharp conditions on the source term $f$ under which viscosity solutions of \eqref{maineq} are actually $C^{2,\alpha}$ smooth at their interior critical points. Surprisingly, once solutions become twice differentiable, at inflection points, $\{D^2u(z) = 0\} =:\mathcal{C}_2(u)$, we obtain a gain of smoothness that surpasses the continuity of the medium. Specifically, we manage to show that $D^2u(x)$ exhibits geometric decay around inflection points, even in cases where the coefficients do not have geometric oscillation decay.

The literature on higher regularity estimates for solutions of fully nonlinear elliptic equations remains relatively sparse, featuring only a handful of foundational results. Notably, Savin's $C^{2,\alpha}$-regularity theorem, as outlined in \cite{Savin}, stands out. This theorem is specifically tailored for solutions to smooth (i.e. $F\in C^1$) fully nonlinear elliptic equations that are deemed ``flat'', i.e. with sufficiently small $L^\infty$ norm. Another important result in this thread of research pertains to  the pointwise second-order differentiaility of solutions to $C^1$--smooth fully nonlinear elliptic operators, up to a set of Hausdorff dimension $n-\epsilon$, \cite{ASS}. To a certain extent, the theorems established in this paper bear a philosophical kinship with these results. However, they diverge in their prerequisites, introducing a distinct dimension to the conditions leading to higher regularity within the analyzed context.

We conclude by noting that the insights guiding the results presented in this paper, at least in a heuristic sense, draw inspiration from techniques associated with free boundaries. It consists of interpreting critical points of viscosity solutions to \eqref{maineq} as if they were part of an {\it abstract} free boundary. %Thus, to some extent, the conceptual framework of this paper has been influenced by the series of papers \cites{Teix4, Teix3, Teix2, Teix5, Teix6}. The analysis carried out here, however, is considerably more intricate and involves several new ideas.

Interestingly, the model investigated in this paper could, itself, be interpreted as a model for certain free boundary problems. In \cites{CS02, CSS04}, the authors investigated fully nonlinear elliptic equations arising from the theory of superconductivity, taking the form
$$
    F(x,D^2u) = g(x,u)\mathcal{X}_{\{|Du| \not = 0 \}}.
$$
Solutions are understood in a weak sense, where touching functions with gradient zero are disregarded. The main new insight there is to show that solutions satisfy ordinary viscosity inequalities and are thus entitled to the classical theory. The results proven in this paper, however, convey that in fact, the set of critical points somehow carries a richer regularity theory. Say, if the function $g(x,s)$ behaves like
$$
    |g(x,s)| \leq q(x)|s|^m,
$$
for $m>0$ and $q \in L^p(B_1)$, then our regularity result, Theorem \ref{improv reg cas p}, states that $u$ is $C^{1,\epsilon_{m,n,p}}$ at points in the set $\mathcal{C}(u)$, where
$$
    \epsilon_{m,n,p} \coloneqq \min \left\{ \frac{m+1-\frac{n}{p}}{(1 - m)_+},\, \alpha_* \right\}^-,
$$
and $\alpha_*$ represents the inherent theoretical limit for the gradient H\"older continuity of solutions to the homogeneous equation, a boundary set by the Nadirashvili--Vl\u{a}du\c{t} program,   \cites{NV1, NV2, NV3, NV4, NV5}; see definition \eqref{def alpha max}.

If $m \approx 1$, then we obtain that, along $\mathcal{C}(u)$, solutions are asymptotically as regular as $F$-harmonic functions. Furthermore, if there is enough structure in the diffusion operator $F$ and its coefficients, then solutions are entitled to Theorem \ref{l_infinity}, which provides $C^{2,\epsilon_1}$ regularity along $\mathcal{C}(u)$, where
$$
    \epsilon_1 \coloneqq \min \left\{2m-\frac{n}{p}, \tau, \beta_*  \right\}^-,
$$
and $\beta_*$ is the maximal Hessian H\"older continuity assured by the Evans' and Krylov's $C^{2,\beta_*}$ regularity theorem, \cites{E, K}; see also \cite{CC}*{Chapter 6}.

%Although of a distinct nature, such free boundary problems can be viewed as particular cases of the problem treated in \cite{AST}, for which diffusion is activated by a gradient threshold.  The notion of a critical point, in this case, should be understood from an alternative perspective and it would be interesting to comprehend what the regularity results may manifest as. 

%We anticipate that the results established in this paper may carry implications for a myriad of other physically relevant models and we want to carry out such endeavors in forthcoming works.

The rest of the paper is organized as follows: in Section \ref{sct Prelim} we provide preliminary definitions, establish the main structural assumptions we require in the PDE \eqref{maineq}, and state our main results. In Section \ref{sharpness estimates}, we make use of barriers to obtain the sharpness of the regularity results. In Section \ref{sct crit pt reg case p}, we establish improved $C^{1,\alpha}$ regularity results at critical points by means of a delicate asymptotic analysis. In Section \ref{sct crit pt reg case infty}, we establish improved $C^{2,\alpha}$ regularity results, and, provided Hessian degenerate points are regarded, we reach the sharp regularity exponent. In Section \ref{Sct Reg below n} we discuss higher regularity properties when the source integrability is below the dimension threshold. Transitioning to Section \ref{STC reg at extrema}, our focus shifts to exploring the regularity at local extrema points, without imposing any continuity assumptions on the coefficients. Notably, both in Section \ref{Sct Reg below n} and Section \ref{STC reg at extrema}, the natural regularity regime is merely $C^{0,\delta}$. However, our estimates transcend this baseline, yielding significantly higher regularity yielding higher differentiability properties of $u$ at those special points. Finally, in Appendix A we discuss Lipschitz estimates, and in  Appendix B we establish gradient growth estimates.

%%%%%%%%%%%%%%%%%%%%%%%%% PRELIMINARIES	
\section{Hypothesis and main results} \label{sct Prelim}

In this section, we present some preliminary definitions and assumptions on the structure of the above equation. We further state and discuss our main results.

\subsection{Preliminary definitions}

We consider diffusion problems in an open subset of the $n$-dimensional Euclidean space $\mathbb{R}^n$. Since our focus is on local and pointwise regularity results, we shall assume all equations are placed in the unit open ball $B_1$ with the center at the origin.

We denote by $\mbox{Sym}(n)$ the space of symmetric matrices of size $n\times n$ and, given constants $0 < \lambda \leq \Lambda$, we say that an operator $\mathcal{G} \colon \mbox{Sym}(n) \to \mathbb{R}$ is $(\lambda,\Lambda)$-elliptic if it satisfies
$$
    \mathcal{M}^-_{\lambda,\Lambda}(M-N) \leq \mathcal{G}(M) - \mathcal{G}(N) \leq \mathcal{M}^+_{\lambda,\Lambda}(M-N),
$$
for all $M, N \in \mbox{Sym}(n)$, where $\mathcal{M}^+_{\lambda,\Lambda}$ and $\mathcal{M}^-_{\lambda,\Lambda}$ stands for the {\it Pucci Extremal Operators} defined as
\begin{eqnarray*}
    \mathcal{M}^+_{\lambda,\Lambda}(M) &\coloneqq& \sup \left\{\mbox{Tr}\left(AM\right) \suchthat \mbox{spec}(A) \subseteq [\lambda,\Lambda]    \right\},\\
    \mathcal{M}^-_{\lambda,\Lambda}(M) &\coloneqq & \inf \left\{\mbox{Tr}(AM) \suchthat \mbox{spec}(A) \subseteq [\lambda,\Lambda] \right\},
\end{eqnarray*}
where $\mbox{spec}(A)$ denotes the set of eigenvalues of the matrix $A \in \mbox{Sym}(n)$.

\begin{definition}[Viscosity solution]
Let $\mathcal{G}\colon B_1 \times \mathbb{R} \times \mathbb{R}^n \times \mbox{Sym}(n) \to \mathbb{R}$ be a continuous function. We say that $u$ is a viscosity subsolution to
$$
    \mathcal{G}(x,u,Du,D^2u)=0 \quad \mbox{in} \quad B_1,
$$
if for every $x_0 \in B_1$ and  $\varphi \in C^2\left(B_r(x_0)\right)$, with $B_r(x_0) \Subset B_1$, such that
$$
    u \leq \varphi \quad \mbox{in} \quad B_r(x_0) \quad \mbox{and} \quad u(x_0) = \varphi(x_0),
$$
then
$$
    \mathcal{G}(x_0,\varphi(x_0), D\varphi(x_0), D^2\varphi(x_0)) \geq 0.
$$
We say that $u$ is a viscosity supersolution to
$$
    \mathcal{G}(x,u,Du,D^2u)=0 \quad \mbox{in} \quad B_1,
$$
if for every $x_0 \in B_1$ and  $\varphi \in C^2\left(B_r(x_0)\right)$, with $B_r(x_0) \Subset B_1$, such that
$$
    u \geq \varphi \quad \mbox{in} \quad B_r(x_0) \quad \mbox{and} \quad u(x_0) = \varphi(x_0),
$$
then
$$
    \mathcal{G}(x_0,\varphi(x_0), D\varphi(x_0), D^2\varphi(x_0)) \leq 0.
$$
A function is said to be a viscosity solution if it is both a sub and supersolution. 
\end{definition}

We indicate \cite{CIL92} for an account of the theory of viscosity solutions. It is worth noting that the results proven in \cite{Caff} (see also \cite{CC} for a more didactical account), as well as the ones presented here are understood as a priori estimates. We refer to \cite{CCKS} for a comprehensive theory of $L^p$-viscosity solutions.

Useful to the subsequent analysis, we define
\[
\mathcal{F}_{n, \lambda, \Lambda} \coloneqq  \left\{ u \in  C(\overline{B_1}) \ \middle\vert \begin{array}{c}
    F(D^2 u) = 0\, \text{in the viscosity sense in}\, B_1\, \mbox{for}  \\
    \mbox{some $(\lambda,\Lambda)$-elliptic operator}\, F\colon \mbox{Sym}(n) \to \mathbb{R}
  \end{array}\right\}.
\]
Although this may be a very large set of functions, it is known, see \cite{CC}, that there exists a universal modulus of continuity for the gradient of functions in $\mathcal{F}_{n, \lambda, \Lambda}$. More precisely, if $u \in \mathcal{F}_{n,\lambda,\Lambda}$, then there exists $C_*>0$ and $\alpha_*$ depending only on dimension and ellipticity constants such that
$$
    \|u\|_{C^{1,\alpha_*}(B_{{3}/{4}})} \leq C_* \|u\|_{L^\infty(B_1)}.
$$
Such an exponent $\alpha_*$ represents a theoretical barrier to the regularity theory of general viscosity solutions and can be defined as

\begin{equation}\label{def alpha max}
  \alpha_* \coloneqq \sup \left\{ \alpha \in (0,1) \ \middle\vert \begin{array}{c}
    \mbox{there exists} \, C_\alpha>0\, \mbox{such that} \, \\
    \|u\|_{C^{1,\alpha}(B_{3/4})} \leq C_\alpha \|u\|_{L^\infty(B_1)}, \forall u \in \mathcal{F}_{n,\lambda,\Lambda}
  \end{array}\right\}.
\end{equation}
Given an exponent $\alpha \in (0,1)$, whenever we write $\alpha^-$, we mean any number $0<\beta< \alpha$.

The analysis of this paper will be concentrated along the set of vanishing critical points, as described in the next definition.

\begin{definition}
For a function $v \in C^1$ we define
$$
    \mathcal{C}(v) = \left\{x \in B_1\suchthat v(x) = |Dv(x)| = 0 \right\},
$$
the set of points that are both zero and critical points of $v$.
\end{definition}

\subsection{Assumptions and results}

Let us discuss the structural assumptions on the operator in \eqref{maineq}. Throughout the paper, the operator $F$, responsible for the diffusion of the model, will be assumed to verify the following structural conditions:

\begin{Assumption}\label{unif ellipticity}
For every $x \in B_1$ fixed, the mapping
$$
    M \longmapsto F\left(x,M\right)
$$
is $(\lambda,\Lambda)$-elliptic.
\end{Assumption}

Monotonicity in the matrix variable is one of the key structural assumptions in order to make sense of the notion of viscosity solution.

The second assumption concerns the growth associated with the RHS of \eqref{maineq}.
\begin{Assumption}\label{convexity}
There exists $m\geq 0$ and $\gamma \geq 0$ such that the mapping $f\colon B_1 \times \mathbb{R} \times \mathbb{R}^n \to \mathbb{R}$ verifies
\begin{equation}\label{RHS_growth} 
    |f(x,s,\xi)| \le q(x)|s|^m \min\left\{1, |\xi|^\gamma \right\},
\end{equation}
where $q \in L^p(B_1)$ is a nonnegative function and $p>n$. 
\end{Assumption}
It is worth commenting that we will consider the term $\min \{1,|\xi|^\gamma \}$ in order to bypass \textit{a priori} Lipschitz estimates. See Appendix A for discussions on such an estimate.

The third assumption pertains to the oscillation of the coefficients of the operator $F$. To streamline our discussion, let us define the oscillation of these coefficients by:
$$
    \mbox{osc}_F(x,y) \coloneqq \sup\limits_{M \in \mbox{Sym}(n)} \frac{|F(x,M) - F(y,M)|}{\|M\| + 1} \quad \mbox{for} \quad x,y \in B_1.
$$

\begin{Assumption}\label{holder continuity of the coeffs}
There exist constants $\tau \in (0,1)$ and $C_\tau >0$ such that
$$
    \mbox{osc}_F(x,y) \leq C_\tau |x-y|^\tau \quad \mbox{for} \quad x,y \in B_1.
$$
\end{Assumption}

The H\"older continuity assumption on the coefficients of the diffusion operator is mainly to be used in Section \ref{growth est sec-prd version}, when improving regularity to the $C^2$ level. This is a natural assumption to attain such a level of regularity. As an $L^p$ theory is concerned, such an assumption can be weakened to some integrability condition of the oscillation function, see \cites{CC, Teix1} for further details. We will keep it as it is to ease the presentation of the results in the paper.

The last assumption concerns \textit{a priori} $C^{2,+}$ estimates:
\begin{Assumption}\label{a prior c2+ estimates}
Solutions to $F(0, D^2 h + N) = 0$ satisfy
\begin{equation}\label{inte C^{2,alp} estimates}
      \| h\|_{C^{2,\beta_*}(B_{1/2})} \le \Theta r^{-(2 + \bar{\epsilon})}\|h\|_{L^{\infty}(B_1)},
\end{equation}
for any $N \in \mbox{Sym}(n)$, with $F(0,N) = 0$.
\end{Assumption}

We now start discussing the statement of the new improved regularity estimates proven in this paper. Our first results concerns improved $C^{1,\alpha_*^-}$ regularity at critical points. This result offers a gain of smoothness, which is especially relevant when $p=n+\epsilon$, for some $0<\epsilon \ll 1$.

\begin{theorem}\label{improv reg cas p}
Let $u \in C (\overline{B_1} )$ be a normalized viscosity solution to 
\begin{equation}
    F(x, D^2u) = f\left(x,u,Du\right) \quad \text{in} \quad B_1. 
\end{equation}
Assume Assumptions \ref{unif ellipticity}, \ref{convexity} are in force and $F$ has a uniform continuous modulus of continuity in the coefficients. Then, $u$ is of class $C^{1,\epsilon_{m,\gamma,n,p}}$ at points in $\mathcal{C}(u)$, that is
\begin{equation}\label{estimate in thm 1}
    |u(x)| \le C|x-x_0|^{1 + \epsilon_{m,\gamma,n,p}},
\end{equation}
 for all $x \in B_{\frac{1}{4}}(x_0)$, where $C>0$ is a universal constant, $x_0 \in \mathcal{C}(u)$ and
$$
    \epsilon_{m,\gamma,n,p} \coloneqq \min \left\{ \frac{m+1-\frac{n}{p}}{(1 - (m+\gamma))_+},\, \alpha_* \right\}^-.
$$
\end{theorem}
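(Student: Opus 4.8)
The plan is to run a geometric iteration at a fixed point $x_0 \in \mathcal{C}(u)$, showing that the rescalings of $u$ decay at the claimed rate by comparison with $F$-harmonic functions. After translating $x_0$ to the origin and using the normalization, the key quantities to control are $S_k := \sup_{B_{2^{-k}}} |u|$. Since $x_0$ is a vanishing critical point, we know $u(0)=0$ and $Du(0)=0$, so $S_0 \le 1$ and, by the $C^{1,\alpha_*}$ estimate for $F$-harmonic functions applied to frozen-coefficient approximations, one expects a decay of the form $S_{k+1} \le \max\{\theta^{1+\alpha_*} S_k, \ C\|\text{error}\|\}$ for a small fixed $\theta \in (0,1)$. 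The first step, therefore, is to set up the normalized blow-up $u_k(x) := u(2^{-k}x)/S_k$ (or a variant scaled so the right exponent appears) and record that it solves a uniformly elliptic equation $F_k(x,D^2 u_k) = f_k(x)$ on $B_1$, where $F_k(x,M) := 2^{-2k} S_k^{-1} F(2^{-k}x, S_k 2^{2k} M)$ is again $(\lambda,\Lambda)$-elliptic and $f_k(x) := 2^{-2k}S_k^{-1} f(2^{-k}x, u(2^{-k}x), Du(2^{-k}x))$.

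The second step is the crucial estimate on $\|f_k\|_{L^p(B_1)}$. Using Assumption \ref{convexity}, $|f(x,u,Du)| \le q(x)|u|^m \min\{1,|Du|^\gamma\}$; on $B_{2^{-k}}$ we bound $|u| \lesssim S_k$ and, via the gradient growth estimate from Appendix B, $|Du| \lesssim 2^k S_k$ on that ball (this is where the vanishing-critical-point hypothesis is used again). Plugging in and changing variables to $B_1$, one gets $\|f_k\|_{L^p(B_1)} \lesssim 2^{2k}S_k^{-1} \cdot \|q\|_{L^p} 2^{-kn/p} \cdot S_k^m \cdot (2^k S_k)^\gamma = \|q\|_{L^p}\, 2^{k(2 - n/p + \gamma)} S_k^{m+\gamma-1}$, using $\min\{1,\cdot\}$ only as needed to keep the $\gamma$-factor bounded when convenient. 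If at some stage $S_k \le 2^{-k(1+\epsilon)}$ we are already done for that ball, so we may assume the reverse inequality $S_k > 2^{-k(1+\epsilon)}$, which converts the $S_k^{m+\gamma-1}$ factor (with $m+\gamma-1<0$ in the nontrivial case) into a power of $2^k$; the total exponent of $2^k$ in $\|f_k\|_{L^p}$ then comes out to be negative precisely when $\epsilon < \frac{m+1-n/p}{(1-(m+\gamma))_+}$, which is the content of the stated exponent $\epsilon_{m,\gamma,n,p}$. When $m+\gamma \ge 1$ the denominator degenerates and no restriction beyond $\alpha_*$ is needed, matching the convention that the min is taken with $\alpha_*$ and the first term is read as $+\infty$.

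The third step is the comparison/iteration itself. Given the smallness of $\|f_k\|_{L^p(B_1)}$, compare $u_k$ on $B_{1/2}$ with the solution $h_k$ of the frozen homogeneous problem $F_k(0,D^2 h_k)=0$ in $B_{1/2}$ with $h_k = u_k$ on $\partial B_{1/2}$; Caffarelli's $C^{1,\alpha}$ estimate (or the $C^{1,\alpha_*}$ estimate for $\mathcal{F}_{n,\lambda,\Lambda}$) plus the $L^p$-bound on $f_k$ and the continuity-of-coefficients assumption give $\|u_k - h_k\|_{L^\infty(B_{1/2})} \le C(\|f_k\|_{L^p}^{\tau} + \omega_F(2^{-k})^{\tau})$ for some $\tau>0$, which can be made $\le \tfrac12 \theta^{1+\beta}$ for any $\beta<\alpha_*$ by taking $k$ large (or $\theta$ small) once $\epsilon<\beta$. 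Since $h_k(0)=0$ and $|Dh_k(0)| \lesssim \|u_k\|_{L^\infty} \le 1$ — here one must be careful that $Du_k(0)$ need not vanish, so the right normalization is not pure sup-scaling but a tilt-corrected version, subtracting the affine part; handling this affine drift cleanly is the main bookkeeping obstacle — one gets $\sup_{B_\theta}|u_k| \le \theta^{1+\beta}$, i.e. $S_{k+1}\le \theta^{1+\beta}S_k$ when $\theta=2^{-1}$ after adjusting constants. Iterating gives $S_k \le C 2^{-k(1+\beta)}$ for every $\beta<\epsilon_{m,\gamma,n,p}$, which is exactly \eqref{estimate in thm 1} with the $(\cdot)^-$ exponent. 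The main difficulty I anticipate is the joint handling of (i) the affine correction at critical points that are not vanishing in the gradient-normalization sense — resolved by working with the oscillation of $u-\ell_k$ for the best affine $\ell_k$ and showing $|D\ell_k|$ stays controlled — and (ii) ensuring the gradient growth input from Appendix B is compatible with the $\min\{1,|\xi|^\gamma\}$ truncation so that no a priori Lipschitz bound is secretly needed; both are technical but do not affect the exponent arithmetic above.
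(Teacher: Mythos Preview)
Your overall architecture---rescale at a vanishing critical point, compare with an $F$-harmonic function, and read off the exponent from the source-term arithmetic---is the same as the paper's, and your algebra identifying the threshold $\epsilon < (m+1-n/p)/(1-(m+\gamma))_+$ is (modulo two sign slips: the prefactor is $2^{-2k}$ and the Jacobian gives $2^{kn/p}$, so the correct power is $2^{k(-2+n/p+\gamma)}$) exactly right. But there is a genuine gap in step two.

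The claim $|Du| \lesssim 2^k S_k$ on $B_{2^{-k}}$ is not free. Appendix~B (Lemma~\ref{growth implies grad est}) does not say this; it says that if you already know the gradient decays at rate $\alpha$, and you have established that $|u|$ decays at a better rate $\alpha_1 < (m+1-n/p)+(m+\gamma)\alpha$, then the gradient upgrades to rate $\alpha_1$. In your dichotomy, the only gradient control available a priori is Caffarelli's $C^{1,\alpha_p}$ estimate, $|Du|\lesssim 2^{-k\alpha_p}$, and there is no reason this is comparable to $2^k S_k$ once $\epsilon>\alpha_p$. If instead you use only the cutoff $\min\{1,|\xi|^\gamma\}\le 1$ to bound the source, interior estimates give $|Du|\lesssim 2^k S_k$ under the dichotomy \emph{only} when $\epsilon < (m+1-n/p)/(1-m)$, which is strictly below the target whenever $\gamma>0$. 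So a single pass of your scheme does not reach the sharp exponent.

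This is precisely why the paper does not run a one-shot dichotomy but a \emph{bootstrap} on the H\"older exponent: starting from $\epsilon_0=\alpha_p$, one proves (Proposition~\ref{bootstrap step 1}) the decay $|u|\lesssim r^{1+\epsilon_1}$ with $\epsilon_1=\min\{[(m+1-n/p)+(m+\gamma)\epsilon_0]/(1+\theta),\alpha_*^-\}$; then Lemma~\ref{growth implies grad est} upgrades the gradient rate to $\epsilon_1$; and one iterates (Proposition~\ref{bootstrap exponent}) to produce an increasing sequence $\epsilon_k$ whose limit is computed in Proposition~\ref{asymptotic analysis case 1,alfa} and equals $\min\{(m+1-n/p)/(1+\theta-(m+\gamma))_+,\alpha_*^-\}$. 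Sending $\theta\to 0$ finishes. In short, Appendix~B is the gear that lets the gradient rate catch up at each step, not a one-time estimate.

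A secondary remark: your worry about affine drift at the origin is not the real obstacle. The paper's compactness lemma (Lemma~\ref{Approx lemma case L^p}) produces an approximating $h\in\mathcal{F}_{n,\lambda,\Lambda}$ with $0\in\mathcal{C}(h)$, because the rescaled functions $v_k$ inherit $v_k(0)=|Dv_k(0)|=0$ from $u$, and this passes to the $C^1$-limit. Hence no tilt correction is needed; the iteration proceeds with $|h(x)|\le C^*|x|^{1+\alpha_*}$ directly.
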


Proceeding with the analysis, we provide regularity results at $C^{2,+}$ level, by requiring further, though natural, structural assumptions.

\begin{theorem}\label{l_infinity}
Let $u \in C(\overline{B_1} )$ is a normalized viscosity solution of  
\begin{equation}
    F(x, D^2u) = f(x,u,Du) \quad \text{in} \quad B_1.
\end{equation}
Assume Assumptions \ref{unif ellipticity}, \ref{holder continuity of the coeffs}, \ref{a prior c2+ estimates} are in force and \ref{convexity} holds with
\begin{equation}
    p > \frac{n(m+\gamma+1)}{2m+\gamma}.
\end{equation}
Given $x_0 \in \mathcal{C}(u)$, there exists a matrix $M_{x_0} \in \mbox{Sym}(n)$ such that 
\begin{equation}\label{estimate for all t}
 \left|u(x) - \frac{1}{2}M_{x_0}(x-x_0) \cdot (x-x_0)\right| \le C |x-x_0|^{2 + \epsilon_1},
\end{equation}
for all $x \in B_{1/4}(x_0)$, where
$$
    \epsilon_1 \coloneqq \min \left\{2m+\gamma-\frac{n}{p}, \tau, \beta_*  \right\}^-,
$$
and $C > 0$ is a universal constant.
\end{theorem}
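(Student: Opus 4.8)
The plan is to run an improvement-of-flatness / geometric-iteration argument at the point $x_0 \in \mathcal{C}(u)$, matching the scaling of the equation to the growth $|f(x,u,Du)| \le q(x)|u|^m\min\{1,|Du|^\gamma\}$ and upgrading the conclusion of Theorem \ref{improv reg cas p} to the quadratic level. Without loss of generality take $x_0 = 0$, $u(0) = |Du(0)| = 0$, and $F(0,0) = 0$ (subtract the constant $F(0,0)$, harmless since $f$ vanishes at $0$ and we may renormalize). From Theorem \ref{improv reg cas p} we already know $|u(x)| \le C|x|^{1+\epsilon_{m,\gamma,n,p}}$, hence in particular $|u(x)| \le C|x|$ and $|Du(x)| \le C|x|^{\epsilon_{m,\gamma,n,p}}$ (via the interior gradient estimate applied on $B_{|x|/2}(x)$). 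The hypothesis $p > \frac{n(m+\gamma+1)}{2m+\gamma}$ is exactly what makes $2m+\gamma - \frac{n}{p} > 0$, so $\epsilon_1 > 0$.

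The core is a discrete iteration: I claim there is $0<\rho\ll 1$ universal and a sequence of symmetric matrices $M_k$, with $M_0 = 0$, such that if $v_k(x) := u(\rho^k x)\,\rho^{-k(2+\epsilon_1)}$ denotes the normalized rescaling and $P_k(x) = \tfrac12 M_k x\cdot x$, then $\|u - P_k\|_{L^\infty(B_{\rho^k})} \le \rho^{k(2+\epsilon_1)}$ and $\|M_{k+1} - M_k\| \le C\rho^{k\epsilon_1}$. The inductive step: on $B_{\rho^k}$, $u - P_k$ solves $F_k(x,D^2(u-P_k)) = f_k(x)$ where $F_k(x,N) := F(x, N + M_k)$ is $(\lambda,\Lambda)$-elliptic with coefficient oscillation controlled by Assumption \ref{holder continuity of the coeffs}, and the right-hand side after rescaling to $B_1$ satisfies, using $|u(\rho^k x)| \le C\rho^k|x|$ and $|Du(\rho^k x)| \le C\rho^{k\epsilon_{m,\gamma,n,p}}$ together with $q \in L^p$,
\begin{equation*}
\|\tilde f_k\|_{L^p(B_1)} \;\le\; C\,\rho^{-k(2+\epsilon_1)}\cdot \rho^{2k}\cdot \|q(\rho^k\cdot)\|_{L^p}\cdot \rho^{km}\cdot \rho^{k\gamma} \;\le\; C\,\rho^{k(m-\tfrac{n}{p})}\rho^{k\gamma}\rho^{-k\epsilon_1}\;\le\; C\rho^{k\delta}
\end{equation*}
for some $\delta>0$, where the $\rho^{-kn/p}$ comes from the scaling of the $L^p$ norm of $q$. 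Then I compare $u - P_k$ with the solution $h$ of the constant-coefficient frozen equation $F(0, D^2 h + M_k) = 0$ with the same boundary data: by the $C^{2,\beta_*}$ a priori estimate (Assumption \ref{a prior c2+ estimates}) $h$ is close to its own second-order Taylor polynomial $\tfrac12(D^2h(0))x\cdot x$, and by the maximum principle / $C^\alpha$-stability estimates the difference $\|u - P_k - h\|_{L^\infty(B_{1/2})}$ is bounded by $\|\tilde f_k\|_{L^p} + \mathrm{osc}_F$-terms $\le C(\rho^{k\delta} + \rho^{k\nu})$. Choosing $M_{k+1} = M_k + D^2h(0)$ (appropriately rescaled) and $\rho$ small depending on the $C^{2,\beta_*}$ constant closes the induction, provided $2+\epsilon_1 \le \min\{2+\delta,\ 2+\nu,\ 2+\beta_*\}$, which is precisely the definition of $\epsilon_1$ as the min of $2m+\gamma-\frac{n}{p}$ (this is $\delta$), $\nu$, and $\beta_*$, all taken with a minus sign so the geometric series of the $\|M_{k+1}-M_k\|$ converges and we stay below the threshold where the frozen-operator comparison degrades.

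Summing $\|M_{k+1}-M_k\| \le C\rho^{k\epsilon_1}$ gives $M_k \to M_{x_0}$ with $\|M_k - M_{x_0}\| \le C\rho^{k\epsilon_1}$, and then for $|x| \sim \rho^k$,
\begin{equation*}
\left|u(x) - \tfrac12 M_{x_0}x\cdot x\right| \le \left|u(x) - \tfrac12 M_k x\cdot x\right| + \tfrac12\|M_k - M_{x_0}\|\,|x|^2 \le \rho^{k(2+\epsilon_1)} + C\rho^{k\epsilon_1}\rho^{2k} \le C|x|^{2+\epsilon_1},
\end{equation*}
which is \eqref{estimate for all t} after interpolating over the dyadic-type annuli $\rho^{k+1} \le |x| \le \rho^k$. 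I expect the main obstacle to be the bookkeeping in the inductive step: one must verify that freezing the operator at $x_0$ (not at the generic center of $B_{\rho^k}$) together with the correction term $M_k$ keeps the comparison function in a class to which Assumption \ref{a prior c2+ estimates} applies — in particular that $F(0, \cdot + M_k)$ still admits the scale-invariant $C^{2,\beta_*}$ estimate with a constant independent of $k$, which requires $F(0, M_k + N) = 0$ to have a solution matching boundary data and relies on $M_k$ staying in a compact set (guaranteed by the convergence of $\sum\|M_{k+1}-M_k\|$). A secondary delicate point is that the smallness regime is self-consistent: the estimate $|Du(x)| \le C|x|^{\epsilon_{m,\gamma,n,p}}$ from Theorem \ref{improv reg cas p} is what feeds the $\rho^{k\gamma}$ gain, and one must check $\epsilon_{m,\gamma,n,p} > 0$ holds under the stronger integrability hypothesis on $p$ here, which it does. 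The rest is routine iteration and interpolation.
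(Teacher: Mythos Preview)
Your overall architecture---an approximation lemma via compactness, a geometric iteration producing matrices $M_k$ with $\|u - P_{M_k}\|_{L^\infty(B_{\rho^k})} \le \rho^{k(2+\epsilon_1)}$ and $\|M_{k+1}-M_k\| \le C\rho^{k\epsilon_1}$, then summing---matches the paper's scheme (Lemma \ref{Approx lemma} and Proposition \ref{iteration c2 alfa small exponent}). The gap is in the exponent bookkeeping for the rescaled source term, and it is not cosmetic.

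You feed the iteration with the bounds $|u(\rho^k x)| \le C\rho^k$ and $|Du(\rho^k x)| \le C\rho^{k\epsilon_{m,\gamma,n,p}}$ coming from Theorem \ref{improv reg cas p}. With these, $|u|^m|Du|^\gamma \lesssim \rho^{k(m + \gamma\,\epsilon_{m,\gamma,n,p})}$, so after the scaling factor $\rho^{-k\epsilon_1}$ and the $L^p$ change-of-variables factor $\rho^{-kn/p}$ you obtain
\[
\|\tilde f_k\|_{L^p(B_1)} \;\le\; C\,\rho^{\,k\left(m + \gamma\,\epsilon_{m,\gamma,n,p} - \frac{n}{p} - \epsilon_1\right)},
\]
not $\rho^{k(2m + \gamma - n/p - \epsilon_1)}$. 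Your displayed line silently replaces $\rho^{k\gamma\,\epsilon_{m,\gamma,n,p}}$ by $\rho^{k\gamma}$, which would require $|Du(x)| \le C|x|$---precisely the $C^{1,1}$-type control you do not yet have. Since $\epsilon_{m,\gamma,n,p} < 1$, the exponent you actually produce is strictly smaller than $2m+\gamma - n/p$, and the iteration cannot close at the sharp $\epsilon_1$ in one pass.

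The paper resolves this with a two-pass bootstrap. First the iteration is run with the cruder input $\sup_{B_t}\{|u|,\,t|Du|\} \le C t^{\,2-n/p}$ (available from $C^{1,1-n/p}$ regularity at a point of $\mathcal{C}(u)$); this yields \eqref{decay estimate for c2alfa} with the suboptimal exponent $\epsilon_0$ of \eqref{escolha 1 p/ eps_0}, which is positive exactly because $p > n(m+\gamma+1)/(2m+\gamma)$. That preliminary $C^{2,\epsilon_0}$ estimate, combined with the gradient-growth Lemma \ref{growth est sec-prd version}, upgrades the input to $\sup_{B_t}\{|u|,\, t|Du|\} \le C t^{2}$. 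Only then does $|u|^m|Du|^\gamma \lesssim t^{\,2m+\gamma}$, and re-running the same iteration with this improved bound delivers the sharp $\epsilon_1 = \min\{2m+\gamma - n/p,\, \nu,\, \beta_*\}^-$. Your sketch is missing this preliminary pass; once inserted, the remainder of your argument (including the point you correctly flag about maintaining $F(0,M_k)=0$ so that Assumption \ref{a prior c2+ estimates} applies uniformly in $k$) goes through as written.
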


It is worth noting that the regularity of the coefficients of the diffusion operator acts as a barrier to the regularity estimate, which is a natural phenomenon to expect; that is precisely why Assumption \ref{holder continuity of the coeffs} is critical to attaining $C^2$ regularity of solutions. 

Nonetheless, in the case where 
$$
    \mathcal{C}(u) \subset \left\{D^2u = 0 \right\},
$$
the previous theorem can be improved with fewer assumptions on the diffusion operator and a significant improvement on the regularity exponent. With this perspective, we have the following:

\begin{theorem}\label{Lp p large}
Let $u \in C (\overline{B_1} )$ is a normalized viscosity solution of  
\begin{equation}
    F(x, D^2u) = f(x,u,Du) \quad \text{in} \quad B_1,
\end{equation}
under Assumptions \ref{unif ellipticity}, \ref{a prior c2+ estimates}. Assume $F$ has a modulus of continuity of Dini type in the coefficients and \ref{convexity} holds with
\begin{equation}\label{restriction for p}
    p > \frac{n(m+\gamma+1)}{2m+\gamma}.
\end{equation}
Let $x_0 \in \mathcal{C}(u) \cap \{D^2u = 0 \}$. Then,  
\begin{equation}\label{growth est in thm3}
     \left|u(x)\right| \le C |x-x_0|^{2 + \epsilon_{m,\gamma,n,p}},
\end{equation}
for all $x\in B_{1/4}(x_0)$, where
$$
    \epsilon_{m,\gamma,n,p} \coloneqq \min \left\{\frac{2m + \gamma - \frac{n}{p}}{(1 - (m+\gamma))_+} ,\beta_* \right\}^-,
$$
and $C > 0$ is a universal constant. 
\end{theorem}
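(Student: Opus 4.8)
The plan is to prove the pointwise $C^{2,\epsilon}$-type estimate at $x_0$ — which after translation we take to be the origin — by a discrete geometric iteration, writing $\epsilon:=\epsilon_{m,\gamma,n,p}$ and $g(x):=f(x,u(x),Du(x))$. First I would note that $g\in L^p(B_1)$ with $\|g\|_{L^p}\le\|u\|_{L^\infty}^m\|q\|_{L^p}$, so Caffarelli's theory gives $u\in C^{1,\alpha}$ near $0$; since $u(0)=0$ and $Du(0)=0$, a bootstrap along the lines of Theorem \ref{improv reg cas p} (or the gradient growth estimates of Appendix B) produces a decay rate $|Du(x)|\le C|x|^{\theta}$, whence a \emph{controlled singularity} for the datum, $|g(x)|\le C\,q(x)\,|x|^{(1+\theta)m+\theta\gamma}$. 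The restriction $p>\frac{n(m+\gamma+1)}{2m+\gamma}$ is what guarantees the resulting order is large enough for $\epsilon$ to be strictly positive and for the scheme below to close; this reduces matters to the equation $F(x,D^2u)=g(x)$ with $g$ vanishing at $0$ faster than a generic $L^p$ datum.

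The engine is a one-step flatness improvement. I would first prove, by compactness, the tangential approximation lemma: for every $\delta>0$ there is $\eta>0$ so that every normalized viscosity solution $v$ of $F(x,D^2v)=\tilde g$ in $B_1$ with $\|\tilde g\|_{L^p(B_1)}\le\eta$ and $\mathrm{osc}_F(0,\cdot)\le\eta$ on $B_1$ lies within $\delta$ in $L^\infty(B_{1/2})$ of some $h$ solving the frozen-coefficient equation $F(0,D^2h)=0$ in $B_{1/2}$ — here Caffarelli's a priori $C^{1,\alpha_*}$ bound supplies compactness and stability of viscosity solutions identifies the limit. Assumption \ref{a prior c2+ estimates} then gives $h$ an interior $C^{2,\beta_*}$ bound, so its second-order Taylor polynomial $P_h$ obeys $\|h-P_h\|_{L^\infty(B_\rho)}\le C_*\rho^{2+\beta_*}$; picking $\rho$ so small that $C_*\rho^{2+\beta_*}\le\tfrac12\rho^{2+\epsilon}$ (possible as $\epsilon<\beta_*$) and then $\delta=\tfrac12\rho^{2+\epsilon}$ yields $\sup_{B_\rho}|v-P_h|\le\rho^{2+\epsilon}$ for a universal radius $\rho$.

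With this in hand I would iterate on the scales $\rho^k$, keeping track of approximating quadratic polynomials $Q_k$ with $Q_0\equiv0$. The rescaled function $v_k(x):=\rho^{-(2+\epsilon)k}u(\rho^kx)$ again solves an equation of the same structural type — the operator $\tilde F_k(x,M)=\rho^{-\epsilon k}F(\rho^kx,\rho^{\epsilon k}M)$ stays $(\lambda,\Lambda)$-elliptic, $|Dv_k|$ stays universally bounded by the $C^{1,\alpha_*}$ estimate (so $\min\{1,|Dv_k|^\gamma\}=|Dv_k|^\gamma$ for $\rho$ small), and the source acquires a factor whose exponent in $\rho$ is $k\bigl(2m+\gamma-\tfrac np+\epsilon(m+\gamma-1)\bigr)$, which is nonnegative exactly when $\epsilon\le\frac{2m+\gamma-n/p}{(1-(m+\gamma))_+}$, i.e. for $\epsilon=\epsilon_{m,\gamma,n,p}$, so its $L^p$-norm remains $\le\eta$. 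Because $v_k(0)=0$, $Dv_k(0)=0$ and $D^2v_k(0)=\rho^{-\epsilon k}D^2u(0)=0$ at an inflection point, the polynomials $Q_k$ stay of size $O(\rho^{\epsilon k})$; their contribution on $B_{\rho^k}$ is then $O(\rho^{(2+\epsilon)k})$ and is absorbed. Applying the one-step lemma inductively gives $\sup_{B_{\rho^k}}|u|\le C\rho^{(2+\epsilon)k}$ for all $k$, and interpolating over dyadic shells produces \eqref{growth est in thm3} with $C=\rho^{-(2+\epsilon)}$. (A preliminary dilation $u\mapsto u(\tau\cdot)/\tau^2$ with $\tau$ small, using $q\in L^p$, places us in the smallness regime $\eta$ to start.)

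I expect the principal difficulty to be reconciling the iteration with a merely \emph{Dini} (rather than Hölder) modulus of continuity of the coefficients: each rescaling inflates the coefficient oscillation by $\rho^{-\epsilon k}$, and Dini summability $\sum_k\omega(\rho^k)<\infty$ does not control $\sum_k\rho^{-\epsilon k}\omega(\rho^k)$ on its own. The resolution — and the reason the inflection hypothesis $x_0\in\{D^2u=0\}$ is indispensable, in contrast to Theorem \ref{l_infinity} — is that in the frozen-coefficient estimate the oscillation is paired with $\|D^2u\|$ at the current scale, which at an inflection point is itself $O(\rho^{\epsilon k})$; the two factors cancel, the errors telescope into a convergent series, and $D^2u$ decays geometrically around $x_0$ even though the medium has no geometric oscillation decay. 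Making this cancellation precise, while checking that the exponent bookkeeping returns exactly $\epsilon_{m,\gamma,n,p}$ and that the approximating quadratics converge, is the technical core.
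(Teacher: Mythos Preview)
Your architecture---compactness approximation lemma followed by a geometric iteration over scales---matches the paper's, and you correctly flag the Dini assumption as the delicate point. There is, however, a real gap in your attempt to iterate \emph{directly} at the terminal exponent $\epsilon=\epsilon_{m,\gamma,n,p}$.

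The circularity is this: your bookkeeping
\[
\|\tilde g_k\|_{L^p(B_1)}\ \lesssim\ \rho^{\,k\bigl(2m+\gamma-\tfrac{n}{p}+\epsilon(m+\gamma-1)\bigr)}
\]
presupposes $|Du(\rho^k y)|\le C\rho^{(1+\epsilon)k}$, i.e.\ $|Dv_k|\le C$ uniformly in $k$. You justify this by ``the $C^{1,\alpha_*}$ estimate'', but Caffarelli's gradient bound for $v_k$ requires precisely the $L^p$-smallness of the rescaled source you are trying to establish. If you break the loop with the crude cutoff $\min\{1,|Du|^\gamma\}\le1$, the exponent degrades to $(2+\epsilon)m-\epsilon-\tfrac{n}{p}$, forcing $\epsilon\le\frac{2m-n/p}{(1-m)_+}$, strictly below the target whenever $\gamma>0$; if you feed in the a~priori decay $|Du(x)|\le C|x|^{\alpha}$ from Theorem~\ref{improv reg cas p}, you obtain only $\epsilon\le\frac{2m+\alpha\gamma-n/p}{(1-m)_+}$, again short. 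The paper closes this by an \emph{outer} bootstrap on exponents (Proposition~\ref{recursive exponent c2 alfa}): assuming $|Du|\le C_k t^{1+\epsilon_k}$, one runs your scale iteration to reach $|u|\le C t^{2+\epsilon_{k+1}}$ with
\[
\epsilon_{k+1}=\min\Bigl\{\tfrac{(m+\gamma)(1+\epsilon_k)+m-\tfrac{n}{p}}{1+\theta},\,\beta_*^-\Bigr\},
\]
then upgrades the gradient via Lemma~\ref{growth est sec-prd version} and repeats. The fixed point of this recursion (Proposition~\ref{asymptotics c2 alfa case}) is exactly $\frac{2m+\gamma-n/p}{(1+\theta-(m+\gamma))_+}$, and $\theta\to0$ delivers the $(\cdot)^-$.

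On the Dini coefficients: rather than a pairing-and-cancellation of $\mathrm{osc}_F$ against a decaying $\|D^2u\|$, the paper's mechanism is simpler. Because $D^2u(x_0)=0$, one needs an approximation lemma \emph{stable at Hessian-degenerate points}---the approximating $h$ must also satisfy $D^2h(0)=0$---and Dini continuity together with Kovats' $C^2$ theory \cite{JK} provides the $C^2$-compactness for this. With $D^2h(0)=0$ there is no quadratic to subtract at all: the paper rescales as $v_j(x)=u(r^{j+1}x)/r^{j(2+\epsilon_{k+1})}$ with \emph{no} $P_{M_j}$, so the term $F(r^{j+1}x,M_j)-F(0,M_j)$ that in Theorem~\ref{l_infinity} carries the coefficient oscillation (and produces the $\nu$ in the exponent) simply never appears. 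This is why $\nu$ drops out of $\epsilon_{m,\gamma,n,p}$ and only Dini, not H\"older, continuity of the coefficients is required.
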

A natural extension of our analysis is when the integrability of the source term lies in $L^p$ with $p<n$. In \cite{ESC}, Escauriaza established the existence of a universal constant $\varepsilon_E \in (0,\frac{n}{2}]$, depending on dimension and ellipticity, such that solutions of $F(x,D^2u) = f \in L^{n-\nu}$, for $\nu < \varepsilon_E$, are entitled to the Harnack inequality, and thus are H\"older continuous for the sharp exponent $\alpha = \frac{n-2\nu}{n-\nu}$, according to \cite{Teix1}. We present the counterpart of our regularity results in this scenario. The proofs unfold through a parallel analysis, akin to the methodology employed in previously established theorems.

\begin{theorem}\label{reg below dimension}
Let $u \in C (\overline{B_1} )$ be a normalized viscosity solution to 
\begin{equation}
    F(x, D^2u) = f\left(x,u,Du\right) \quad \text{in} \quad B_1. 
\end{equation}
Assume Assumptions \ref{unif ellipticity}, \ref{convexity} are in force with $p = n-\nu$, where $\nu \in (0,\varepsilon_E)$, and $F$ has a uniform continuous modulus of continuity in the coefficients. Then, $u$ is of class $C^{0,\epsilon_{m,n,\nu}}$ at points in $\mathcal{C}_0(u)$, that is
\begin{equation}
    |u(x)| \le C|x-x_0|^{\epsilon_{m,n,\nu}},
\end{equation}
for all $x \in B_{\frac{1}{4}}(x_0)$, where $C>0$ is a universal constant, $x_0 \in \mathcal{C}_0(u)$ and
$$
    \epsilon_{m,n,\nu} \coloneqq \min \left\{ \frac{\frac{n-2\nu}{n-\nu}}{(1 - m)_+},\, 1 \right\}^-.
$$   
\end{theorem}
When there is an interplay between the amount of integrability of the RHS and the decay of the zeroth term, we have the following
\begin{theorem}\label{reg below dimension interplay}
Let $u \in C (\overline{B_1} )$ be a normalized viscosity solution to 
\begin{equation}
    F(x, D^2u) = f\left(x,u,Du\right) \quad \text{in} \quad B_1. 
\end{equation}
Assume Assumptions \ref{unif ellipticity}, \ref{convexity} are in force with $p = n-\nu$, where $\nu \in (0,\varepsilon_E)$, and $F$ has a uniform continuous modulus of continuity in the coefficients. Assume further that
\begin{equation}\label{interplay below dim}
    \frac{m\, n}{2m + 1} > \nu.
\end{equation}
Then, $u$ is differentiable at $x_0 \in \mathcal{C}_0(u)$ and there exists a universal constant $C>0$ such that
\begin{equation}
    |u(x) - Du(x_0)\cdot (x-x_0)| \le C|x-x_0|^{1+\epsilon_{m,n,\nu}},
\end{equation}
for all $x \in B_{\frac{1}{4}}(x_0)$, where
$$
    \epsilon_{m,n,\nu} \coloneqq \min \left\{ \frac{m\,n - \nu (m+1)}{n-\nu},\, \alpha_* \right\}^-.
$$   
\end{theorem}
Notice that if the interplay between integrability and decay on the RHS is stronger, we have the following corollary.
\begin{corollary}
Let $u \in C(\overline{B_1} )$ is a normalized viscosity solution of  
\begin{equation}
    F(x, D^2u) = f(x,u,Du) \quad \text{in} \quad B_1.
\end{equation}
Assume Assumptions \ref{unif ellipticity}, \ref{holder continuity of the coeffs}, \ref{a prior c2+ estimates} are in force and \ref{convexity} holds with $p = n-\nu$ and
\begin{equation}\label{stronger interplay below dim}
    \frac{n(m-1)}{2m} > \nu
\end{equation}
If $x_0 \in \mathcal{C}_0(u)$, then $u$ is twice differentiable at $x_0$ and
\begin{equation}\nonumber
 \left|u(x) - Du(x_0)\cdot (x-x_0)-\frac{1}{2}D^2u(x_0)(x-x_0) \cdot (x-x_0)\right| \le C |x-x_0|^{2 + \epsilon_1},
\end{equation}
for all $x \in B_{1/4}(x_0)$, where
$$
    \epsilon_1 \coloneqq \min \left\{2m-\frac{n}{n-\nu}, \tau, \beta_*  \right\}^-,
$$
and $C > 0$ is a universal constant.
\end{corollary}
We remark that if \eqref{stronger interplay below dim} holds, then \eqref{interplay below dim} is also true.

It is worth observing, as pointed out in the introduction, that our results are designed to allow independent decay of the RHS of \eqref{maineq}. To encapsulate the foregoing, we bring a few corollaries to elucidate. First and foremost, we present the consequences of Theorem \ref{improv reg cas p}.

\begin{corollary}
Let $u \in C(\overline{B}_1)$ be as in Theorem \ref{improv reg cas p}. If $x_0 \in \mathcal{C}_0(u) \backslash \mathcal{C}_1(u)$, then \eqref{estimate in thm 1} holds with
$$
    \epsilon_{m,n,p} \coloneqq \min \left\{ \frac{m+1-\frac{n}{p}}{(1 - m)_+},\, \alpha_* \right\}^-.
$$
If $x_0 \in \mathcal{C}_1(u) \backslash \mathcal{C}_0(u)$, then \eqref{estimate in thm 1} holds with
$$
    \epsilon_{\gamma,n,p} \coloneqq \min \left\{ \frac{1-\frac{n}{p}}{(1 - \gamma)_+},\, \alpha_* \right\}^-.
$$
\end{corollary}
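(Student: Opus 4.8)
The plan is to derive both assertions directly from Theorem \ref{improv reg cas p}, by subtracting from $u$ its first-order Taylor polynomial at $x_0$ --- which turns $x_0$ into a vanishing critical point of the shifted function --- and by then noticing that the factor of the bound \eqref{RHS_growth} which does \emph{not} degenerate at $x_0$ can be absorbed into the weight, or into a fixed constant near $x_0$. That absorption is exactly what forces the corresponding decay exponent in the conclusion of Theorem \ref{improv reg cas p} down to $0$, which is how one arrives at the two exponents in the statement.

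Suppose first that $x_0\in\mathcal{C}_1(u)\setminus\mathcal{C}_0(u)$, so $Du(x_0)=0$ but $u(x_0)\neq 0$, and set $w:=u-u(x_0)$. Then $D^2w\equiv D^2u$, the function $w$ is a viscosity solution of $F(x,D^2w)=f(x,\,w+u(x_0),\,Dw)$, and $w(x_0)=|Dw(x_0)|=0$, i.e.\ $x_0\in\mathcal{C}(w)$. Since $u$ is normalized, $|w(x)+u(x_0)|^m=|u(x)|^m\le 1$, so along $w$ one has $|f(x,w+u(x_0),Dw)|\le q(x)\min\{1,|Dw|^\gamma\}$ on $B_1$; hence $w$ (after normalization) falls under Theorem \ref{improv reg cas p} with the exponent in the $u$-slot equal to $0$ and the same $\gamma$. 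Theorem \ref{improv reg cas p} then gives $|w(x)|\le C|x-x_0|^{1+\epsilon_{\gamma,n,p}}$ on $B_{1/4}(x_0)$, the exponent $\epsilon_{\gamma,n,p}$ being precisely $\epsilon_{m,\gamma,n,p}$ evaluated at $m=0$. As $Du(x_0)=0$, this is estimate \eqref{estimate in thm 1} for $u-u(x_0)$, i.e.\ for $u$ modulo its first-order Taylor polynomial at $x_0$.

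Suppose next that $x_0\in\mathcal{C}_0(u)\setminus\mathcal{C}_1(u)$, so $u(x_0)=0$ and $\xi:=Du(x_0)\neq 0$, and set $v:=u-\xi\cdot(x-x_0)$. Then $D^2v\equiv D^2u$, $v$ is a viscosity solution of $F(x,D^2v)=f(x,\,v+\xi\cdot(x-x_0),\,Dv+\xi)$, and $v(x_0)=|Dv(x_0)|=0$, i.e.\ $x_0\in\mathcal{C}(v)$. Since $u\in C^1$ and $\xi\neq 0$, there is $\rho>0$ with $|Dv(x)+\xi|=|Du(x)|\ge\tfrac12|\xi|$ on $B_\rho(x_0)$, so that $\min\{1,|Dv+\xi|^\gamma\}$ is comparable to a positive constant there; consequently, on $B_\rho(x_0)$,
$$
  \big|f\big(x,\,v+\xi\cdot(x-x_0),\,Dv+\xi\big)\big|\ \le\ C\,q(x)\,\big|v+\xi\cdot(x-x_0)\big|^m\ \le\ C'\,q(x)\,|x-x_0|^m,
$$
the last bound following from the Lipschitz estimate $|u(x)|\le L|x-x_0|$ valid about the zero $x_0$. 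Running the asymptotic scheme behind Theorem \ref{improv reg cas p} for $v$ --- now with gradient-growth exponent $0$, the only new feature being the fixed affine shift $\xi\cdot(x-x_0)$ in the $u$-argument of $f$ --- produces $|v(x)|\le C|x-x_0|^{1+\epsilon_{m,n,p}}$ on a ball about $x_0$, the exponent $\epsilon_{m,n,p}$ being $\epsilon_{m,\gamma,n,p}$ evaluated at $\gamma=0$. As $u(x_0)=0$, this is estimate \eqref{estimate in thm 1} for $u-\xi\cdot(x-x_0)$, i.e.\ for $u$ modulo its first-order Taylor polynomial at $x_0$.

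The technical heart is the second case. The bare Lipschitz control $|u(x)|\le L|x-x_0|$ yields at once only the exponent $\min\{m+1-\tfrac{n}{p},\,\alpha_*\}^-$ for $v$; to reach the sharper $\epsilon_{m,n,p}$ one must reopen the iterative step in the proof of Theorem \ref{improv reg cas p} and check that the shifted source $q(x)\,|v+\xi\cdot(x-x_0)|^m$ still forces geometric decay at rate $1+\epsilon_{m,n,p}$ --- i.e.\ that the growth bound propagated for $u$ along the iteration feeds back into the factor $|u|^m$ exactly as it does at a genuine vanishing critical point. All the remaining ingredients --- freezing $F$ against its uniform modulus of continuity, comparison with the class $\mathcal{F}_{n,\lambda,\Lambda}$, and the ceiling $\alpha_*$ --- carry over from the proof of Theorem \ref{improv reg cas p} without change.
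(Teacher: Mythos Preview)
The paper itself gives no proof of this corollary; it merely remarks that the corollaries are ``scholia'' obtained by amendments to the approximation scheme in the proof of Theorem~\ref{improv reg cas p}, ``requiring minor adaptations, which are omitted here.'' Your approach---subtract the first-order Taylor polynomial to force $x_0\in\mathcal C(\,\cdot\,)$, then run the machinery with one of the decay exponents set to zero---is exactly in that spirit, and for the case $x_0\in\mathcal C_1(u)\setminus\mathcal C_0(u)$ it works: since $Dw=Du$ and $|u|^m\le 1$, the source along $w$ is bounded by $q\min\{1,|Dw|^\gamma\}$, and the bootstrap recursion $\epsilon_{k+1}\sim (1-n/p+\gamma\epsilon_k)/(1+\theta)$ of Proposition~\ref{bootstrap exponent} is legitimately driven by the improved gradient decay $|Dw|\le C|x-x_0|^{\epsilon_k}$ (available because $Dw(x_0)=0$).

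The case $x_0\in\mathcal C_0(u)\setminus\mathcal C_1(u)$ has a genuine gap. You correctly observe that the bare Lipschitz bound $|u|\le L|x-x_0|$ yields only the exponent $\min\{m+1-n/p,\alpha_*\}^-$, and you then assert that reopening the iteration lets ``the growth bound propagated for $u$ feed back into the factor $|u|^m$ exactly as it does at a genuine vanishing critical point.'' But this is precisely what fails. The recursion $\epsilon_{k+1}\sim (m+1-n/p+m\,\epsilon_k)/(1+\theta)$, whose fixed point is $(m+1-n/p)/(1-m)$, requires $|u(x)|^m\le C|x-x_0|^{m(1+\epsilon_k)}$, i.e.\ $|u(x)|\le C|x-x_0|^{1+\epsilon_k}$. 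Since $Du(x_0)=\xi\neq 0$, however, $|u(x)|\sim|\xi|\,|x-x_0|$ near $x_0$, and no improved decay of $v=u-\xi\cdot(x-x_0)$ changes this: the linear part $\xi\cdot(x-x_0)$ always dominates $v$ in $u=v+\xi\cdot(x-x_0)$. Hence the source factor $|u|^m$ contributes only $|x-x_0|^m$ at \emph{every} stage of the bootstrap, the recursion collapses to the constant $\epsilon_{k+1}\sim (m+1-n/p)/(1+\theta)$, and the scheme cannot reach $(m+1-n/p)/(1-m)$. Your argument does not close this gap, and the paper's omitted ``minor adaptations'' do not visibly close it either; the two cases in the corollary are genuinely asymmetric in this respect.
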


It is interesting to observe that the decay from the zeroth order term provides, as expected, a higher regularity improvement.

We also provided its second-order version, a consequence of Theorem \ref{l_infinity}.

\begin{corollary}
Let $u \in C(\overline{B}_1)$ be as in Theorem \ref{l_infinity}. If $x_0 \in \mathcal{C}_0(u) \backslash \mathcal{C}_1(u)$, then \eqref{estimate for all t} holds with
$$
    \epsilon_1 \coloneqq \min \left\{2m-\frac{n}{p}, \tau, \beta_*  \right\}^-,
$$
If $x_0 \in \mathcal{C}_1(u) \backslash \mathcal{C}_0(u)$, then \eqref{estimate for all t} holds with
$$
    \epsilon_1 \coloneqq \min \left\{\gamma-\frac{n}{p}, \tau, \beta_*  \right\}^-.
$$
\end{corollary}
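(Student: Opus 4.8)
The plan is to derive both statements from Theorem~\ref{l_infinity} by \emph{switching off} one of the two decay factors in the growth condition \eqref{RHS_growth}; no new analytic ingredient beyond the proof of Theorem~\ref{l_infinity} should be needed, and the whole point is to keep track of which power of the radius survives in the source term during the iteration that establishes that theorem. Since $u$ is normalized one always has $|u|^{m}\le 1$ and $\min\{1,|Du|^{\gamma}\}\le 1$ pointwise; in each of the two regimes one of these factors is discarded (bounded by $1$) and the other is kept and exploited.

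In the first regime I would discard the gradient factor: $|f(x,u,Du)|\le q(x)|u|^{m}$, which is \eqref{RHS_growth} with the exponent pair $(m,0)$ in place of $(m,\gamma)$. I would then run the proof of Theorem~\ref{l_infinity} with this pair. At the point $x_0$ the quantity $|u|$ decays quadratically (this is exactly the content of the inductive step of that proof), so after the second-order rescaling $u_{r}(y)=u(x_0+ry)/r^{2}$ the rescaled source has $L^{n}(B_1)$-norm of order $r^{\,2m-\frac{n}{p}}$ — by H\"older applied to $q\in L^{p}$ — which is precisely the decay that drives the $C^{2,\beta_*}$ iteration with gain $\epsilon_1=\min\{2m-\tfrac{n}{p},\nu,\beta_*\}^{-}$. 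The restriction on $p$ to be in force is the one attached to the pair $(m,0)$, namely $p>\tfrac{n(m+1)}{2m}$.

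In the second regime the roles are swapped: I would discard the $|u|^{m}$ factor, getting $|f(x,u,Du)|\le q(x)\min\{1,|Du|^{\gamma}\}$, which is \eqref{RHS_growth} with the pair $(0,\gamma)$. Since $|Du|$ decays linearly at $x_0$, the rescaled source now has $L^{n}$-norm of order $r^{\,\gamma-\frac{n}{p}}$, and the same iteration yields \eqref{estimate for all t} with $\epsilon_1=\min\{\gamma-\tfrac{n}{p},\nu,\beta_*\}^{-}$, under the restriction $p>\tfrac{n(\gamma+1)}{\gamma}$ associated with $(0,\gamma)$.

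I expect the only real difficulty to be bookkeeping rather than anything deep. First, one must verify that the integrability threshold genuinely is the one attached to the reduced exponent pair, since $p>\tfrac{n(m+\gamma+1)}{2m+\gamma}$ (the hypothesis of Theorem~\ref{l_infinity}) does not in general imply $p>\tfrac{n(\gamma+1)}{\gamma}$; the cleanest route is therefore to re-run the iteration with the reduced pair rather than invoke Theorem~\ref{l_infinity} verbatim. Second, if one reads $\mathcal{C}_0(u)\setminus\mathcal{C}_1(u)$ and $\mathcal{C}_1(u)\setminus\mathcal{C}_0(u)$ literally — a zero that is not a critical point, resp. a critical point that is not a zero — then $x_0$ is not a vanishing critical point, and one first subtracts from $u$ its first-order Taylor polynomial at $x_0$ (a constant, resp. a linear form) to manufacture one; one must then check that the shifted function solves an equation of the same structure and that the discarded factor is, on a small ball around $x_0$, comparable to a positive constant, which is what makes the reduction legitimate and, in the quadratic scenario, is where one must be careful that the surviving factor still produces the advertised power. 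The remaining points — that Assumptions~\ref{unif ellipticity}, \ref{holder continuity of the coeffs} and \ref{a prior c2+ estimates} survive the replacement of $f$, and that passing from an estimate on a small concentric ball to one on $B_{1/4}(x_0)$ costs only a universal constant — are handled exactly as in Theorem~\ref{l_infinity}.
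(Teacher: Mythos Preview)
Your overall strategy—discarding one decay factor and re-running the iteration of Theorem~\ref{l_infinity} with the reduced exponent pair—is exactly what the paper has in mind; the authors state that these corollaries are \emph{scholia} obtained ``through amendments in the proofs of the main theorems'' with ``minor adaptations'' of the approximating scheme, and give no further details. Your treatment of the second regime $x_0\in\mathcal{C}_1(u)\setminus\mathcal{C}_0(u)$ is sound: after subtracting the constant $u(x_0)$ one obtains a vanishing critical point for $w=u-u(x_0)$, the factor $|u|^{m}=|w+u(x_0)|^{m}$ is comparable to a positive constant near $x_0$, and the surviving factor $|Du|^{\gamma}=|Dw|^{\gamma}$ decays like $\rho^{\gamma}$ once the $C^{2}$ iteration is in place, giving the exponent $\gamma-\tfrac{n}{p}$.

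The first regime, however, contains a genuine gap. You assert that ``the quantity $|u|$ decays quadratically (this is exactly the content of the inductive step of that proof)''; but that step rests on $x_0\in\mathcal{C}(u)=\mathcal{C}_0(u)\cap\mathcal{C}_1(u)$, which fails here since $Du(x_0)\neq 0$. After you subtract the linear form $\ell(x)=Du(x_0)\cdot(x-x_0)$ to manufacture a vanishing critical point for $w=u-\ell$, it is $w$ that decays quadratically, not $u$. The source, though, still involves the original $|u|^{m}=|w+\ell|^{m}$, and near $x_0$ one has $|u(x)|\sim |Du(x_0)|\,|x-x_0|$, which is only \emph{linear} decay. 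Under the second-order rescaling this produces a source of size $r^{\,m-\frac{n}{p}}$, not $r^{\,2m-\frac{n}{p}}$, so the iteration as written only delivers the exponent $\min\{m-\tfrac{n}{p},\nu,\beta_*\}^{-}$. You yourself flag this as the point ``where one must be careful that the surviving factor still produces the advertised power,'' but you do not resolve it; and the paper, having omitted the argument entirely, offers no resolution either. Unless an additional mechanism is supplied, the claimed exponent $2m-\tfrac{n}{p}$ in the first case remains unjustified by your outline.
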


Finally, when Hessian degenerate points are concerned, we have the following consequence of Theorem \ref{Lp p large}.

\begin{corollary}
Let $u \in C(\overline{B}_1)$ be as in Theorem \ref{Lp p large}. If $x_0 \in \mathcal{C}_0(u) \cap \mathcal{C}_2(u) \backslash \mathcal{C}_1(u)$, then \eqref{growth est in thm3} holds with
$$
    \epsilon_{m,n,p} \coloneqq \min \left\{\frac{2m - \frac{n}{p}}{(1 - m)_+} ,\beta_* \right\}^-.
$$
If $x_0 \in \mathcal{C}_1(u) \cap \mathcal{C}_2(u) \backslash \mathcal{C}_0(u)$, then \eqref{growth est in thm3} holds with
$$
    \epsilon_{\gamma,n,p} \coloneqq \min \left\{\frac{\gamma - \frac{n}{p}}{(1 - \gamma)_+} ,\beta_* \right\}^-.
$$
\end{corollary}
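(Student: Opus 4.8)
The plan is to obtain both displays as degenerate specializations of Theorem \ref{Lp p large}, the point $x_0$ remaining a vanishing critical and Hessian-degenerate point throughout; the set-difference notation merely records which of the two decay mechanisms in Assumption \ref{convexity} is being exploited. The elementary observation is that \eqref{RHS_growth} is monotone in its two exponents. Indeed, since $\min\{1,|\xi|^\gamma\}\le 1$, condition \eqref{RHS_growth} with exponents $(m,\gamma)$ implies \eqref{RHS_growth} with $(m,0)$ and the same $q$; and, the estimates being a priori with $|u|\le\|u\|_{L^\infty(B_1)}$, one has $|f(x,u,Du)|\le\|u\|_{L^\infty(B_1)}^m\,q(x)\min\{1,|Du|^\gamma\}$ along the solution, which is \eqref{RHS_growth} with $m=0$ and $q$ replaced by $\|u\|_{L^\infty(B_1)}^m q\in L^p(B_1)$. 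Consequently, a solution to which Theorem \ref{Lp p large} applies for $(m,\gamma)$ also fits its hypotheses for $(m,0)$ and for $(0,\gamma)$, provided the restriction \eqref{restriction for p} is taken in the correspondingly specialized form, namely $p>\frac{n(m+1)}{2m}$ when $\gamma$ is switched off and $p>\frac{n(\gamma+1)}{\gamma}$ when $m$ is switched off.

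Applying Theorem \ref{Lp p large} with the reduced pair then gives the result. For the first display, use $(m,0)$: the exponent $\epsilon_{m,\gamma,n,p}=\min\{(2m+\gamma-n/p)/(1-(m+\gamma))_+,\beta_*\}^-$ collapses to $\min\{(2m-n/p)/(1-m)_+,\beta_*\}^-$, and \eqref{growth est in thm3} yields $|u(x)|\le C|x-x_0|^{2+\epsilon_{m,n,p}}$ on $B_{1/4}(x_0)$, as claimed. For the second display, use $(0,\gamma)$: the exponent becomes $\min\{(\gamma-n/p)/(1-\gamma)_+,\beta_*\}^-$, and \eqref{growth est in thm3} gives the second assertion. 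The analogous corollaries of Theorems \ref{improv reg cas p} and \ref{l_infinity} follow by the identical switching-off device.

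The only substantive point is to confirm that this degeneration of parameters is legitimate inside the proof of Theorem \ref{Lp p large}: that the dyadic compactness-and-scaling iteration of Section \ref{sct crit pt reg case infty}, together with the gradient-growth estimates of Appendix B, survives the limit in which $m$ or $\gamma$ vanishes. This is the main --- and essentially the only --- obstacle. One checks that the decay exponents enter that argument only through the combinations $m+\gamma$, $2m+\gamma$ and $(1-(m+\gamma))_+$, together with the strict positivity of $2m+\gamma-n/p$ supplied by \eqref{restriction for p}; that no step divides by $m$ or by $\gamma$; and that the geometric ratio driving the iteration stays strictly below $1$ in the degenerate limit, so that the scheme, and with it the conclusion \eqref{growth est in thm3}, closes verbatim at $\gamma=0$ and at $m=0$. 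Granting this robustness, both statements of the corollary follow immediately, the remaining regime in which both $m,\gamma>0$ are retained being Theorem \ref{Lp p large} itself.
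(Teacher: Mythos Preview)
Your argument rests on a misreading of the hypothesis on $x_0$. The condition $x_0 \in \mathcal{C}_0(u)\cap\mathcal{C}_2(u)\setminus\mathcal{C}_1(u)$ means $u(x_0)=0$, $D^2u(x_0)=0$, and $Du(x_0)\neq 0$; it is \emph{not} a bookkeeping device recording which decay mechanism is active while $x_0$ remains in $\mathcal{C}(u)$. In particular $x_0\notin\mathcal{C}(u)=\mathcal{C}_0(u)\cap\mathcal{C}_1(u)$, so the standing hypothesis of Theorem \ref{Lp p large} that $x_0\in\mathcal{C}(u)\cap\{D^2u=0\}$ fails, and you cannot invoke the theorem (with any choice of exponents) at such a point. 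The same obstruction applies symmetrically in the second case, where $u(x_0)\neq 0$. Your monotonicity observation on \eqref{RHS_growth} is correct but does not repair this: passing from $(m,\gamma)$ to $(m,0)$ relaxes Assumption \ref{convexity}, yet it does nothing to place $x_0$ in $\mathcal{C}(u)$.

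The paper is explicit that these corollaries are \emph{scholia}: they are obtained by amending the proof of Theorem \ref{Lp p large}, not by applying its thesis. Concretely, when $Du(x_0)\neq 0$ one must rerun the compactness--iteration scheme of Section \ref{sct crit pt reg case infty} subtracting off the (nonzero) affine part of $u$ at $x_0$, modify the approximation lemma so that the limit $h$ is only required to satisfy $h(0)=0$ and $D^2h(0)=0$ (not $Dh(0)=0$), and estimate the rescaled source using only the $|u|^m$ factor, since $|Du|$ does not decay at $x_0$. This is exactly why the exponent degenerates to the $\gamma=0$ formula. The analogous adaptation with the constant part subtracted handles the second case. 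Your proposal skips all of this by asserting the point remains in $\mathcal{C}(u)$, which it does not.
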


We emphasize that these corollaries are actually {\it scholia} of the preceding theorems. In other words, they follow through amendments in the proofs of the main theorems rather than emerging as direct consequences of their respective theses. A key distinction lies in the construction of the approximating scheme, requiring minor adaptations, which are omitted here.

\subsection{Scaling properties}

We finish this section by discussing the scaling properties of equations of the form \eqref{maineq}. Let us assume $u$ solves
$$
    F(x, D^2u) = f(x,u,Du) \quad \mbox{in} \quad B_1,
$$
in the viscosity sense. Let $A$ and $B$ be positive constants and define
$$
    v(x) = \frac{u(Ax)}{B}.
$$ 
Direct computations show that $v$ is a viscosity solution to

$$
    \mathcal{F}(y,D^2v) = \overline{f}(y,v,Dv),
$$
where
$$
    \mathcal{F}(y,M) = \frac{A^2}{B} F\left (Ay, \frac{B}{A^2}M \right ),
$$
and the scaled font is given as,
$$
    \overline{f}(y,s,\xi) = A^{2}B^{-1}f\left(Ay,Bs,A^{-1}B\xi\right).
$$
Easily one cheks that the new operator $\mathcal{F}$ is $(\lambda,\Lambda)$-elliptic and
\begin{eqnarray*}
    |\overline{f}\left(y,s,\xi\right)| & = & A^{2}B^{-1}|f\left(Ay,Bs,A^{-1}B\xi\right)|\\
                                       & \leq & A^2 B^{-1}q(Ay)|Bs|^m \min \left\{1, |A^{-1}B \xi|^\gamma \right\}\\
                                       & = & A^{2-\gamma} B^{m+\gamma-1}|s|^m \min\left\{A^\gamma B^{-\gamma}, |\xi|^\gamma \right\}.  
\end{eqnarray*}

Picking $B := \max\{ \|u\|_{\infty}, 1\}$, we can assume, with no loss of generality,  that solutions are normalized, that is, $\|u \|_\infty \leq 1$.

\section{Sharpness}\label{sharpness estimates}

In this short session, we discuss the sharpness of Caffarelli's estimates, in the context of the main equation \eqref{maineq}, if no further structural conditions are taken into consideration. More precisely, we show that if the source function has a persistent singularity of order $-n/p$, at an interior critical point $x_0$, then the regularity of viscosity solutions is limited by the estimates arising from the Potential Theory. This is the contents of the following Proposition.

\begin{proposition}\label{Prop sharpness}
Let $u \in C (\overline{B}_1)$ be a viscosity solution to \eqref{maineq}. Assume further that Assumptions \ref{unif ellipticity} and \ref{convexity} are in force. If $x_0$ is an interior point and assume 
$$
    \inf_{B_r(x_0)} f \geq \delta r^{-\frac{n}{p}},
$$
for all $0 < r < r_0$. Then
\begin{equation}\label{sharpness with rhs blowing up}
    \limsup_{x \to x_0}\left( \frac{u(x)-u(x_0)}{|x-x_0|^{2 - \frac{n}{p}}}\right) \geq \frac{\delta}{\Lambda \left(2-\frac{n}{p} \right)\left(n + \frac{n}{p} \right)}.
\end{equation}
In particular, if $x_0$ is a critical point, then $u$ fails to be $C^{2 - \frac{n}{p} +\epsilon}$ at $x_0$, for all $\epsilon>0$.
\end{proposition}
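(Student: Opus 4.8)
The plan is to construct an explicit radial subsolution (a barrier) that forces the solution to grow at least like $|x-x_0|^{2-n/p}$ near $x_0$, and then compare it with $u$ using the comparison principle for viscosity solutions. Without loss of generality assume $x_0 = 0$ and set $r = |x|$. The candidate barrier is the radial function $w(x) := u(0) + c\, r^{2-n/p}$ for a suitable small constant $c > 0$ to be determined; note $2 - n/p \in (1,2)$ since $p > n$, so $w \in C^1$ near the origin but is not $C^2$ there — which is exactly the borderline behavior we want to capture. First I would compute, for $x \neq 0$, the Hessian of $w$: writing $\beta := 2 - n/p$, one has $D^2(r^\beta) = \beta r^{\beta-2}\big(I + (\beta-2)\tfrac{x\otimes x}{r^2}\big)$, whose eigenvalues are $\beta r^{\beta - 2}$ (multiplicity $n-1$, from directions tangent to the sphere) and $\beta(\beta-1)r^{\beta-2}$ (the radial direction). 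Since $0 < \beta < 1$, the radial eigenvalue is negative and the tangential ones are positive, so $\mathcal{M}^-_{\lambda,\Lambda}(D^2 w) = c\,r^{\beta-2}\big(\lambda(n-1)\beta + \Lambda\beta(\beta-1)\big)$ away from the origin — wait, I must be careful with signs, and in fact the relevant bound is an \emph{upper} one: $F(x, D^2 w) \le F(x,0) + \mathcal{M}^+_{\lambda,\Lambda}(D^2 w) \le \mathcal{M}^+_{\lambda,\Lambda}(D^2 w) \le \Lambda \, c\, \beta(n-1) r^{\beta - 2} + \lambda c\beta(\beta-1)r^{\beta-2} \le \Lambda c \beta(n + n/p) r^{-n/p}$, using $\beta - 2 = -n/p$, $\beta - 1 = 1 - n/p$, crudely bounding the negative radial term by $0$ (or keeping it, which only helps), and absorbing constants. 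The key point: this is $\le \delta r^{-n/p} \le \inf_{B_r} f \le f(x,u,Du)$ \emph{provided} $\Lambda c \beta (n + n/p) \le \delta$, i.e. $c \le \delta / \big(\Lambda(2 - n/p)(n + n/p)\big)$.

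Next, I would run the comparison argument. Fix the largest such $c$, namely $c_0 := \delta / \big(\Lambda(2-\tfrac{n}{p})(n+\tfrac{n}{p})\big)$. Suppose, for contradiction, that $\limsup_{x\to 0}\frac{u(x)-u(0)}{|x|^{2-n/p}} < c_0$. Then there is $\rho \in (0, r_0)$ with $u(x) - u(0) < c_0 |x|^{2 - n/p}$ for $0 < |x| \le \rho$, in particular $u < w$ on $\partial B_\rho$ (strictly, after shrinking $c_0$ to $c_0 - \eta$ if needed — actually the clean way is: pick $c < c_0$ close to $c_0$, then $u(x) - u(0) < c\rho^{2-n/p}$ on $\partial B_\rho$ for $\rho$ small, while $w$ with this $c$ satisfies $F(x,D^2 w) \le f$ in $B_\rho \setminus \{0\}$, and $w(0) = u(0)$). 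The barrier $w$ is a classical strict subsolution on $B_\rho \setminus \{0\}$; since $w$ touches $u$ from above would be backwards, I instead want $w \le u$ by comparison. Concretely: $w - u$ is a viscosity subsolution of a Pucci-type operator on $B_\rho \setminus \{0\}$, it is $\le 0$ on $\partial B_\rho$, and at the origin $w(0) = u(0)$ so $w - u$ is continuous with $(w-u)(0) = 0$; the isolated singularity at $0$ is removable because $\beta > 0$ keeps $w$ bounded and in fact the maximum principle with a removable point applies (one can add $\epsilon r^{\beta}$-free corrections or simply note $2 - n/p$ is subharmonic-compatible). Hence $w \le u$ on $B_\rho$, i.e. $u(x) - u(0) \ge c |x|^{2-n/p}$, contradicting the assumed strict upper bound for $x$ along a sequence. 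Letting $c \uparrow c_0$ yields \eqref{sharpness with rhs blowing up}. The final sentence then follows: if $0$ is a critical point, $u(x) - u(0) = o(|x|)$ would be forced by $C^{1,\epsilon}$ but more to the point, $C^{2 - n/p + \epsilon}$ regularity at $0$ with $Du(0) = 0$ (and $u(0)$ the value) gives $u(x) - u(0) = \tfrac12 D^2u(0)x\cdot x + o(|x|^{2 - n/p + \epsilon}) = O(|x|^2) + o(|x|^{2-n/p+\epsilon})$, hence $\frac{u(x)-u(0)}{|x|^{2-n/p}} \to 0$, contradicting \eqref{sharpness with rhs blowing up}.

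The main obstacle I anticipate is the treatment of the isolated singularity of the barrier at $x_0$: the comparison principle must be applied on a punctured ball, so I need to argue that the point $\{x_0\}$ is a removable singularity for the comparison. Since $2 - n/p > 0$, $w$ extends continuously across $x_0$ and the standard trick works — either invoke a Hopf-type / removable-singularity lemma for Pucci subsolutions (a single point has zero capacity relative to second-order operators in dimension $n \ge 2$, and $w$ is bounded), or, more elementarily, perturb by subtracting $\eta \cdot (\text{something that blows down})$, but there is no natural such term here, so the cleanest route is to note that $w - u$ attains its interior maximum over $\overline{B_\rho}$ either on $\partial B_\rho$ (where it is $\le 0$) or at an interior point $\ne x_0$ (where the strict subsolution inequality is violated) or at $x_0$ itself (where it equals $0$), and in all cases $\max \le 0$. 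Making "it equals $0$ at $x_0$ is not a strict interior max in a way that causes trouble" precise is the one delicate point; a short lemma stating that if $v$ is a continuous viscosity subsolution of $\mathcal{M}^-_{\lambda,\Lambda}(D^2 v) \ge 0$ in $B_\rho \setminus \{x_0\}$ with $v \le 0$ on $\partial B_\rho$ and $v(x_0) \le 0$, then $v \le 0$ in $B_\rho$, handles it. Everything else is the routine Hessian-of-a-power computation and bookkeeping of the constant, which matches the stated $\frac{\delta}{\Lambda(2-n/p)(n+n/p)}$.
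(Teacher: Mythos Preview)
Your approach—the radial barrier $w(x)=u(x_0)+c\,|x-x_0|^{2-n/p}$ followed by a comparison argument—is exactly the paper's. However, your comparison step is run in the wrong direction. After assuming $\limsup<c_0$ you correctly deduce $u<w$ on $\partial B_\rho$, but then assert that $w-u$ is a \emph{subsolution} of a Pucci operator with $w-u\le 0$ on $\partial B_\rho$; both claims are false. From $F(x,D^2w)\le \delta|x-x_0|^{-n/p}\le f=F(x,D^2u)$ one obtains $\mathcal{M}^-_{\lambda,\Lambda}(D^2(w-u))\le 0$, so $w-u$ is a \emph{supersolution} of $\mathcal{M}^-=0$, and its boundary values on $\partial B_\rho$ are \emph{positive} by your own hypothesis. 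The comparison principle in this configuration yields only $u\le w$ in $B_\rho$, which is perfectly consistent with what you assumed and produces no contradiction; your intended conclusion $w\le u$ simply does not follow.

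The fix is to swap the roles: $u-w$ is a viscosity subsolution of $\mathcal{M}^+_{\lambda,\Lambda}=0$ in $B_\rho\setminus\{x_0\}$, with $u-w<0$ on $\partial B_\rho$ and $(u-w)(x_0)=0$. Once the isolated point $x_0$ is shown to be removable (which you correctly identify as the one technical issue; a bounded subsolution of $\mathcal{M}^+\ge 0$ on a punctured ball extends across the puncture), the weak maximum principle forces $\sup_{B_\rho}(u-w)=\sup_{\partial B_\rho}(u-w)<0$, contradicting $(u-w)(x_0)=0$. This is precisely the paper's argument, which it compresses into the single sentence ``since $w(x_0)=u(x_0)$, for each $r>0$ there must exist $x_r\in\partial B_r(x_0)$ such that $w(x_r)\le u(x_r)$.'' One minor slip to also correct: since $p>n$ you have $\beta=2-n/p\in(1,2)$, not $(0,1)$, so the radial eigenvalue $\beta(\beta-1)r^{\beta-2}$ is positive and $D^2w\ge 0$; this only makes the $\mathcal{M}^+$ bound cleaner.
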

\begin{proof}
Let us define
$$
    w(x) \coloneqq C|x-x_0|^{2 - \frac{n}{p}} + u(x_0),
$$
where
$$
    C \coloneqq \frac{\delta}{\Lambda \left(2-\frac{n}{p} \right)\left(n + \frac{n}{p} \right)}.
$$
Direct computations show that
$$
    D^2w(x) = \frac{C\left(2- \frac{n}{p}\right)}{|x-x_0|^{\frac{n}{p}}} \left(I_n + \frac{n}{p} \frac{x-x_0}{|x-x_0|} \otimes \frac{x-x_0}{|x-x_0|} \right),
$$
and so
$$
    F(x,D^2w(x)) \leq \mathcal{M}^+_{\lambda,\Lambda} (D^2w(x)) = \Lambda \left(2-\frac{n}{p} \right)\left(n + \frac{n}{p}\right)C|x-x_0|^{-\frac{n}{p}}.
$$
Thus, if $x \in \partial B_r(x_0)$ and by the choice of the constant $C$, it holds that
$$
    F(x,D^2w) \leq \delta r^{-\frac{n}{p}} \leq F(x,D^2u).
$$
As a consequence, since $w(x_0) = u(x_0)$, for each $r>0$, there must exist a $x_r \in \partial B_r(x_0)$ such that
$$
    w(x_r) \leq u(x_r),
$$
from which \eqref{sharpness with rhs blowing up} follows.
\end{proof}

In the superquadratic regime, i.e. when $\gamma>2$ in Assumption \ref{convexity}, while our theorems still yield improved regularity at critical points, locally solutions are, in general, no better than H\"older continuous, see for instance \cite{CV}. 

By a slight adaptation of the previous barrier argument, we obtain a quantitative upper bound for the optimal H\"older continuity exponent of solutions in the superquadratic regime. 

\begin{proposition}\label{Prop sharpness 2}
Assume $n\geq 2$ and let $u \in C(\overline{B}_1)$ be a viscosity solution to \eqref{maineq}. Assume further that Assumptions \ref{unif ellipticity} and \ref{convexity} are in force with $\gamma>2$ and 
$$
    \inf_{B_r(x_0)} f \geq \delta r^{-\frac{\gamma}{\gamma-1}},
$$
for all $0 < r < r_0$. Then
\begin{equation}\label{sharpness with rhs blowing up 2}
    \limsup_{x \to x_0}\left( \frac{u(x)-u(x_0)}{|x-x_0|^{\frac{\gamma-2}{\gamma-1}}}\right) \geq \frac{\delta}{\Lambda \left(\frac{\gamma-2}{\gamma-1} \right)\left(n - \frac{\gamma}{\gamma-1}\right) }.
\end{equation}
In particular, $u$ fails to be $C^{\frac{\gamma-2}{\gamma-1} +\epsilon}$ at $x_0$, for all $\epsilon>0$. 
\end{proposition}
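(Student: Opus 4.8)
The plan is to mimic the barrier argument used in the proof of Proposition \ref{Prop sharpness}, replacing the potential-theoretic profile $|x-x_0|^{2-n/p}$ by the scaling-critical profile associated with the superquadratic (Hamilton--Jacobi) term, namely $|x-x_0|^{(\gamma-2)/(\gamma-1)}$. The exponent $\tau := \frac{\gamma-2}{\gamma-1} \in (0,1)$ for $\gamma > 2$ is precisely the one making the gradient term $|Dw|^\gamma$ scale like $r^{-\gamma/(\gamma-1)}$, matching the assumed lower bound on $f$. First I would set
$$
    w(x) := C|x-x_0|^{\tau} + u(x_0), \qquad \tau = \tfrac{\gamma-2}{\gamma-1},
$$
with $C>0$ to be fixed as the constant appearing on the right-hand side of \eqref{sharpness with rhs blowing up 2}, and compute $Dw$ and $D^2w$ explicitly. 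One gets $|Dw(x)| = C\tau |x-x_0|^{\tau-1}$, and the Hessian has the form
$$
    D^2 w(x) = C\tau |x-x_0|^{\tau - 2}\left( I_n + (\tau - 2)\,\frac{x-x_0}{|x-x_0|}\otimes \frac{x-x_0}{|x-x_0|}\right).
$$
Since $\tau - 2 < 0$ (in fact $\tau - 2 = -\gamma/(\gamma-1) \in (-2,-1)$), the radial direction contributes a negative eigenvalue $C\tau|x-x_0|^{\tau-2}(\tau-1)$ and the $n-1$ tangential directions contribute positive eigenvalues $C\tau|x-x_0|^{\tau-2}$; here the hypothesis $n\ge 2$ is used so that $D^2w$ is not simply a negative multiple of a rank-one matrix. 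Then $\mathcal M^+_{\lambda,\Lambda}(D^2w) \le \Lambda\, C\tau|x-x_0|^{\tau-2}\big((n-1) + |\tau-1|\big) = \Lambda\, C\tau\big(n - \tfrac{\gamma}{\gamma-1}\big)|x-x_0|^{-\gamma/(\gamma-1)}$, after simplifying $(n-1)+|\tau-1| = (n-1) + (1-\tau) = n - \tau = n - \tfrac{\gamma-2}{\gamma-1} = n - \tfrac{\gamma}{\gamma-1} + \tfrac{\gamma-2}{\gamma-1}\cdot 0$; a careful bookkeeping of this algebraic identity is needed but routine.

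Next I would verify that $w$ is a viscosity subsolution of \eqref{maineq} in a punctured ball $B_{r_0}(x_0)\setminus\{x_0\}$. Using ellipticity, $F(x,D^2w) \le \mathcal M^+_{\lambda,\Lambda}(D^2w) \le \Lambda C\tau\big(n-\tfrac{\gamma}{\gamma-1}\big)|x-x_0|^{-\gamma/(\gamma-1)}$; choosing $C = \delta\big/\big(\Lambda\tau(n-\tfrac{\gamma}{\gamma-1})\big)$ makes the right-hand side equal to $\delta |x-x_0|^{-\gamma/(\gamma-1)}$, which on $\partial B_r(x_0)$ is bounded by $\inf_{B_r(x_0)} f \le f(x,u(x),Du(x)) = F(x,D^2u)$ in the viscosity sense. (One should note this requires $n - \tfrac{\gamma}{\gamma-1} > 0$ for $C$ to be positive; if $n - \tfrac{\gamma}{\gamma-1}\le 0$ the stated bound is vacuous or the constant should be read with the appropriate sign convention, and I would add a remark to that effect.) Finally, since $w(x_0) = u(x_0)$, if we had $u(x) > w(x)$ for all $x \in \partial B_r(x_0)$ for some small $r$, then comparing the subsolution $w$ with the supersolution $u$ on the annulus (using the maximum principle for Pucci operators, equivalently the comparison principle for \eqref{maineq} with the same right-hand side on each sphere) would force a contradiction at the minimum of $u - w$; more precisely, for every $r \in (0,r_0)$ there must exist $x_r \in \partial B_r(x_0)$ with $w(x_r) \le u(x_r)$, i.e.
$$
    \frac{u(x_r) - u(x_0)}{|x_r - x_0|^{(\gamma-2)/(\gamma-1)}} \ge C,
$$
and letting $r \to 0$ yields \eqref{sharpness with rhs blowing up 2}. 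The final assertion about failure of $C^{\tau + \epsilon}$ regularity at $x_0$ is immediate from \eqref{sharpness with rhs blowing up 2}, since a $C^{\tau+\epsilon}$ bound at $x_0$ would make the $\limsup$ vanish.

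The main obstacle I anticipate is making the comparison step fully rigorous at the puncture $x_0$: $w$ is only $C^\tau$ (not $C^2$) at $x_0$, and $D^2w$, $Dw$ blow up as $x\to x_0$, so one cannot naively apply a comparison principle on the full ball. The clean way around this — exactly as in the proof of Proposition \ref{Prop sharpness}, which sidesteps it entirely — is to argue sphere-by-sphere: one never needs comparison on a solid region, only the elementary observation that $w(x_0)=u(x_0)$ together with the differential inequality $F(x,D^2w)\le F(x,D^2u)$ holding on each $\partial B_r(x_0)$ prevents $w$ from lying strictly below $u$ on all of $\partial B_r(x_0)$; equivalently, if it did, one slides $w$ up slightly and finds an interior touching point in the annulus $B_r(x_0)\setminus\{x_0\}$ where $w$ is smooth, contradicting the viscosity subsolution property of... rather, contradicting that $u$ is a supersolution. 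I would phrase this last step in precisely the terse form used for Proposition \ref{Prop sharpness} so that the two proofs read in parallel, and double-check the one genuinely new ingredient, the eigenvalue computation for $D^2w$ when $\tau \in (0,1)$ and $\tau - 2 \in (-2,-1)$, paying attention to which eigenvalue is negative.
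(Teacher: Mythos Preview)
Your approach is exactly the paper's: define the radial barrier $w(x)=C|x-x_0|^{\tau}+u(x_0)$ with $\tau=\frac{\gamma-2}{\gamma-1}$, compute $D^2w$, bound $F(x,D^2w)$ via $\mathcal M^{+}_{\lambda,\Lambda}$, choose $C$ so that this matches the assumed lower bound $\delta r^{-\gamma/(\gamma-1)}$ on $f$, and conclude that on every sphere $\partial B_r(x_0)$ there is a point $x_r$ with $u(x_r)\ge w(x_r)$. Your more cautious discussion of the comparison step (working in the punctured ball, sliding, etc.) is fine and is what the paper's terse ``there must exist $x_r\in\partial B_r(x_0)$'' encodes.

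The only slip is in the Pucci bookkeeping, precisely where you flag ``careful bookkeeping\dots routine''. You replace the contribution of the negative radial eigenvalue $C\tau(\tau-1)r^{\tau-2}$ by $+\Lambda C\tau|\tau-1|r^{\tau-2}$, obtaining the factor $(n-1)+|\tau-1|=n-\tau$; your subsequent identification of this with $n-\tfrac{\gamma}{\gamma-1}$ is simply false, since $n-\tau=n-\tfrac{\gamma-2}{\gamma-1}$ and $n-\tfrac{\gamma}{\gamma-1}$ differ by $\tfrac{2}{\gamma-1}$. The paper instead keeps the \emph{signed} radial eigenvalue and writes
\[
\mathcal M^{+}_{\lambda,\Lambda}(D^2w)=\Lambda C\tau\big((n-1)+(\tau-1)\big)|x-x_0|^{\tau-2},
\]
and then $(n-1)+(\tau-1)=n-2+\tau=n-\tfrac{\gamma}{\gamma-1}$ is the algebraic identity you were after (note $2-\tau=\tfrac{\gamma}{\gamma-1}$). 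Your cruder bound would still yield the qualitative conclusion that $u$ fails to be $C^{\tau+\epsilon}$ at $x_0$, but with a strictly smaller constant than the one in \eqref{sharpness with rhs blowing up 2}; to recover the stated constant, do not take the absolute value of the radial eigenvalue.
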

\begin{proof}
Let us define
$$
    w(x) \coloneqq C|x-x_0|^{\frac{\gamma-2}{\gamma-1}} + u(x_0),
$$
where
$$
    C \coloneqq \frac{\delta}{\Lambda \left(\frac{\gamma-2}{\gamma-1} \right)\left(n - \frac{\gamma}{\gamma-1}\right)}.
$$
Observe that since $n\geq 2$ and $\gamma>2$, we have $n - \frac{\gamma}{\gamma-1}>0$. Direct computations show that
\begin{eqnarray*}
    Dw(x) &=& C \left(\frac{\gamma-2}{\gamma-1} \right)|x-x_0|^{\frac{-\gamma}{\gamma-1}}\\
    D^2w(x) & = &  C \left(\frac{\gamma-2}{\gamma-1} \right)|x-x_0|^{\frac{-\gamma}{\gamma-1}}\left(I_n - \left(\frac{\gamma}{\gamma-1}\right)\frac{x-x_0}{|x-x_0|} \otimes \frac{x-x_0}{|x-x_0|} \right)
\end{eqnarray*}
and so
$$
    F(x,D^2w(x)) \leq \mathcal{M}^+_{\lambda,\Lambda} (D^2w(x)) = \Lambda \left(\frac{\gamma-2}{\gamma-1} \right)\left(n - \frac{\gamma}{\gamma-1}\right)C|x-x_0|^{-\frac{\gamma}{\gamma-1}}.
$$
Thus, if $x \in \partial B_r(x_0)$ and by the choice of the constant $C$, it holds that
$$
    F(x,D^2w) \leq \delta r^{-\frac{\gamma}{\gamma-1}} \leq F(x,D^2u).
$$
As a consequence, since $w(x_0) = u(x_0)$, for each $r>0$, there must exist a $x_r \in \partial B_r(x_0)$ such that
$$
    w(x_r) \leq u(x_r),
$$
from which \eqref{sharpness with rhs blowing up} follows.
\end{proof}

%%%%%%%%%%%%%%%%%%%%%%%%%%%%%%
\section{$C^{1,\alpha}$ regularity improvement} \label{sct crit pt reg case p}

This section is dedicated to the proof Theorem \ref{improv reg cas p}. The starting point of the proof is the (already known) Caffarelli's $C^{1,\alpha_p}$ regularity estimate. If $u$ is a normalized viscosity solution to
$$
    F(x, D^2u) = f(x,u,Du),
$$
then, Assumption \ref{convexity} assures that the RHS is an $L^p$ function for $p > n$. Therefore, it falls into the scope of \cite{Caff}, see also \cite{Teix1} for optimality, for which it holds that
$$
    \alpha_p = \min \left\{1 - \frac{n}{p}, \alpha_*^- \right\},
$$
where $\alpha_*$ is the universal exponent associated to functions in $\mathcal{F}_{n,\lambda,\Lambda}$, defined in \eqref{def alpha max}.

\subsection{Gain of regularity}

As mentioned before, $C^{1,\alpha_p}$ regularity estimates are automatically true, for
$$
    \alpha_p = \min\left\{1 - \frac{n}{p}, \alpha_*^- \right\},
$$
where $\alpha_*$ is the associated exponent to the regularity theory for the homogeneous equation with constant coefficients.

In what follows we will use the following notation: 
$$
    \tilde{F}_{\mu}(x, M): = \mu^2 F(\mu x, \mu^{-2} M).
$$

Recall, from assumption \ref{convexity}, the RHS satisfies
$$
    |f(x, s, \xi)| \leq q(x)|s|^m \min\left\{1,|\xi|^{\gamma}\right\},
$$
for some nonnegative function $q(x) \in L^p(B_1)$, for $p > n$, and $m, \gamma \ge 0$.

\begin{lemma}[Approximation lemma]\label{Approx lemma case L^p}
Let $u \in C (\overline{B}_1 )$ be a normalized viscosity solution of 
\begin{equation}\label{main eq case p}
\tilde{F}_{\mu}(x, D^2u) = f(x, u, Du) \quad \text{in} \quad B_1.
\end{equation}
Assume $0 \in \mathcal{C}(u)$. Given $\delta > 0$ there exists $\epsilon = \epsilon(\delta, n, \lambda, \Lambda)$ such that if 
$$
    \|f (x,u(x),Du(x))\|_{L^{p}(B_1)} < \epsilon \quad \text{and} \quad \mu < \epsilon,
$$
then there exists $h \in \mathcal{F}_{n,\lambda,\Lambda}$, such that $0 \in \mathcal{C}(h)$ and 
$$
    \|u - h\|_{L^{\infty}(B_{1/2})} < \delta. 
$$
\end{lemma}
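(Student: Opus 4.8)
The plan is to argue by contradiction and compactness, which is the standard ``$\varepsilon$--$\delta$'' scheme behind regularity improvement of this type. Suppose the statement fails for some fixed $\delta_0 > 0$: then there exist sequences $\mu_j \to 0$, functions $f_j$ with $\|f_j(x, u_j(x), Du_j(x))\|_{L^p(B_1)} \to 0$, and normalized viscosity solutions $u_j \in C(\overline{B_1})$ of $\tilde F_{\mu_j}(x, D^2 u_j) = f_j(x, u_j, Du_j)$ in $B_1$ with $0 \in \mathcal{C}(u_j)$, such that $\|u_j - h\|_{L^\infty(B_{1/2})} \ge \delta_0$ for every $h \in \mathcal{F}_{n,\lambda,\Lambda}$ with $0 \in \mathcal{C}(h)$.

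The first step is to obtain uniform interior estimates for the $u_j$ so as to extract a convergent subsequence. Since each $u_j$ is normalized, $\|u_j\|_{L^\infty(B_1)} \le 1$, and since $\|f_j(\cdot, u_j, Du_j)\|_{L^p(B_1)} \le 1$ with $p > n$, Caffarelli's $C^{1,\alpha_p}$ estimate (recalled just before the lemma; note $\tilde F_{\mu_j}(\cdot, 0) = 0$ preserves the structural constants) gives a uniform bound $\|u_j\|_{C^{1,\alpha_p}(B_{3/4})} \le C$ with $C$ universal. By Arzelà--Ascoli, along a subsequence (not relabeled) $u_j \to u_\infty$ in $C^1_{\rm loc}(B_{3/4})$, hence uniformly in $\overline{B_{1/2}}$, and $u_\infty \in C^{1,\alpha_p}$. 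The convergence $u_j(0) = 0$, $Du_j(0) = 0$ passes to the limit, so $0 \in \mathcal{C}(u_\infty)$.

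The second step is to identify the limit equation. Write $\tilde F_{\mu_j}(x, M) = \mu_j^2 F(\mu_j x, \mu_j^{-2} M)$. As $\mu_j \to 0$, the coefficient oscillation of $x \mapsto \tilde F_{\mu_j}(x, M)$ over $B_1$ tends to $0$ (here only the \emph{uniform continuity} of the modulus of continuity of $F$ in the coefficients is used: $\operatorname{osc}$ of $\tilde F_{\mu_j}$ on $B_1$ equals the oscillation of $F$ on $B_{\mu_j}$, which vanishes). Hence, passing to a further subsequence, $\tilde F_{\mu_j}(x, \cdot) \to F_\infty(\cdot)$ locally uniformly for some $(\lambda,\Lambda)$-elliptic operator $F_\infty \colon \mathrm{Sym}(n) \to \mathbb{R}$ independent of $x$ (a Pucci-extremal compactness argument; after subtracting the constant $\tilde F_{\mu_j}(x,0) \equiv 0$ the limit satisfies $F_\infty(0) = 0$). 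On the right-hand side, $\|f_j(\cdot, u_j, Du_j)\|_{L^p(B_1)} \to 0$. By the standard stability theorem for $L^p$-viscosity solutions under locally uniform operator convergence and $L^p$-convergence of the right-hand side (see \cite{CCKS}), $u_\infty$ is a viscosity solution of $F_\infty(D^2 u_\infty) = 0$ in $B_{3/4}$. Therefore $u_\infty|_{B_1}$, after the trivial rescaling to $B_1$, belongs to $\mathcal{F}_{n,\lambda,\Lambda}$ and satisfies $0 \in \mathcal{C}(u_\infty)$; taking $h = u_\infty$ contradicts $\|u_j - h\|_{L^\infty(B_{1/2})} \ge \delta_0$ for $j$ large.

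The main obstacle is the passage to the limit in the equation, i.e. making the stability step rigorous: one must ensure the operators $\tilde F_{\mu_j}$ converge (in the appropriate locally uniform sense on $\mathrm{Sym}(n)$, uniformly in $x \in B_1$) to an $x$-independent elliptic operator, and that the $L^p$-viscosity-solution stability theorem applies with the right-hand side converging to $0$ in $L^p$ while $Du_j \to Du_\infty$ uniformly. The uniform ellipticity bounds the $\tilde F_{\mu_j}$ in a way that gives equi-Lipschitz dependence on $M$, so Arzelà--Ascoli yields a limit operator; the vanishing coefficient oscillation forces the limit to be $x$-independent. A secondary technical point, worth a line, is that the definition of $\mathcal{F}_{n,\lambda,\Lambda}$ requires an equation on all of $B_1$, while the $C^{1,\alpha_p}$ compactness is only interior; this is handled by the standard rescaling $x \mapsto u_\infty(\tfrac{3}{4}x)$, which keeps $0$ a vanishing critical point and only changes $\delta_0$ by a harmless factor, or by covering and iterating. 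The exponent $\alpha_p$ plays no role beyond furnishing compactness, so no sharpness is needed here.
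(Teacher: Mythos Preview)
Your proof is correct and follows essentially the same contradiction-plus-compactness scheme as the paper: negate the conclusion, use Caffarelli's $C^{1,\alpha_p}$ estimates to extract a $C^1$-convergent subsequence, pass to the limit in the operator and source via stability, and reach a contradiction since the limit lies in $\mathcal{F}_{n,\lambda,\Lambda}$ with $0\in\mathcal{C}(u_\infty)$. In fact you are slightly more careful than the paper in two places---you explain why the limit operator is $x$-independent (via vanishing coefficient oscillation as $\mu_j\to 0$) and you flag the $B_{3/4}$ versus $B_1$ domain mismatch in the definition of $\mathcal{F}_{n,\lambda,\Lambda}$---both of which the paper glosses over.
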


\begin{proof}
Assume, seeking a contradiction, that for some $\delta_0 > 0$, there exists a sequence $(u_k, f_k, \mu_k)_{k \in \mathbb{N}} \subset C (\overline{B}_1 ) \times L^p(B_1) \times \mathbb{R}^+$ satisfying
\begin{enumerate}[label=(\roman*)]

    \item $u_k$ is normalized;

    \item $0 \in \mathcal{C}(u_k)$;

    \item $\displaystyle \max \left\{ \|f_k\|_{L^p(B_1)}, \mu_k \right\} \leq \frac{1}{k} $;

    \item $\tilde{F}_{\mu_k} (x, D^2 u_k ) = f_k(x, u_k, D u_k)$ in $B_1$;
\end{enumerate}
however, 
\begin{equation}\label{Appr Lem contrad eq case L^p 0}
    \mbox{dist}\left[u_k, \mathcal{F}_{n,\lambda, \Lambda}\right] \geq \delta_0, 
\end{equation} 
for all $k\ge 1$. By our assumptions on $f$ and the diffusion operator, we have $\{ u_k \}_{k \in \mathbb{N} } \in C_{loc}^{1,\alpha_p} (B_1)$, with universal estimates. Therefore, passing to a subsequence if necessary, we obtain
$$
    (u_k, Du_k) \to 
        (u_\infty, Du_\infty)
$$
locally uniform in $L^\infty(B_1) \times L^\infty(B_1)$; in particular we deduce that $0 \in \mathcal{C}(u_\infty)$. Moreover, through a further subsequence in necessary, we obtain $\tilde{F}_{\mu_k} \to F'$ locally uniformly on $B_1 \times \mbox{Sym}(n)$ and  $f_k \to 0$. Thus, by stability results in the theory of viscosity solutions, we have
$$
    F'(D^2 u_{\infty}) = 0 \quad \text{in} \quad B_{3/4},
$$
for some $(\lambda,\Lambda)-$elliptic operator $F'$, which contradicts \eqref{Appr Lem contrad eq case L^p 0} for $k$ sufficiently large. 
\end{proof}

Assuming $ 0 \in \mathcal{C}(u)$ and that $u$ is  normalized $C_{loc}^{1,\alpha_p}$-solution of \eqref{main eq case p} we have, in particular, that for all $ 0 < t \le 1/2$, 
\begin{equation}
    \sup\limits_{B_t} |Du | \le C  t^{\alpha_p}. 
\end{equation}
Set $\alpha_p < \epsilon_1 < \alpha_{*}$ as 
\begin{equation}\label{new holdet exponente case p}
     \epsilon_1 \coloneqq \min \left\{ \frac{(m+1 -\frac{n}{p}) + (m+\gamma)\alpha_p}{1+\theta}, \alpha_{*}^{-} \right\},
\end{equation}
for a $\theta>0$ to be chosen later. We emphasize that this special choice for $\theta$ will be important in the asymptotic analysis.

We are now ready to prove the  $C^{1,\epsilon_1}$ regularity of $u$ at the origin. 

\begin{proposition}\label{bootstrap step 1}
Let $u \in C (\overline{B}_1  )$ be as in Theorem \ref{improv reg cas p}. If
$$
    \sup_{B_t(x_0)} |Du| \leq C_0t^{\alpha_p},
$$
then
$$
    \sup_{B_t(x_0)} |Du| \leq C_1t^{\epsilon_1},
$$
for any $x_0 \in \mathcal{C}(u)$, where $\epsilon_1$ is as defined in \eqref{new holdet exponente case p}.
\end{proposition}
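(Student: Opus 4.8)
The plan is a discrete geometric iteration in the spirit of Caffarelli's tangential analysis, feeding the Approximation Lemma \ref{Approx lemma case L^p} against the a priori $C^{1,\alpha_p}$-bound, and extracting the gradient estimate only at the very end. After translating so that $x_0=0$, I would first pass to a small-data regime: replacing $u$ by $u(\tau\,\cdot)$ keeps $u$ normalized, keeps $0\in\mathcal C(u)$, only shrinks the constant $C_0$, and — since $p>n$ — makes both $\mu$ and $\|f(\cdot,u,Du)\|_{L^p(B_1)}$ smaller than any prescribed threshold. Then I would fix a universal radius $\rho\in(0,\tfrac12)$ so small that $C_*\rho^{1+\alpha_*}\le\tfrac12\rho^{1+\epsilon_1}$ — possible exactly because $\epsilon_1<\alpha_*$ — set $\delta:=\tfrac12\rho^{1+\epsilon_1}$, let $\epsilon_0:=\epsilon(\delta)$ be the constant from Lemma \ref{Approx lemma case L^p}, and shrink $\tau$ further so the rescaled data lie below $\epsilon_0$.

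The heart of the proof is the claim that $\sup_{B_{\rho^k}}|u|\le\rho^{k(1+\epsilon_1)}$ for all $k\ge 0$, proved by induction, the base case being the normalization. For the inductive step set $v_k(x):=\rho^{-k(1+\epsilon_1)}u(\rho^k x)$; it is normalized on $B_1$, has $0\in\mathcal C(v_k)$, and — applying the elementary rescaling of the ``Scaling properties'' subsection and then renormalizing the (now degenerate) ellipticity constants by the factor $\rho^{k(1-\epsilon_1)}$ — solves an equation $\mathcal G_k(x,D^2v_k)=f_k(x,v_k,Dv_k)$ with $\mathcal G_k$ uniformly $(\lambda,\Lambda)$-elliptic, coefficient oscillation bounded by that of $F$ over $B_{\mu\rho^k}$ (hence vanishing as $k\to\infty$), and with its ``$\mu$'' equal to $\mu\rho^k<\epsilon_0$; the compactness argument of Lemma \ref{Approx lemma case L^p} applies verbatim to this slightly larger family. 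Inserting the inductive bound $|u(\rho^k x)|\le\rho^{k(1+\epsilon_1)}$ and the a priori bound $|Du(\rho^k x)|\le C_0(\rho^k)^{\alpha_p}$ — the latter used to dominate the factor $\min\{1,|Du|^\gamma\}$, which is what dispenses with any Lipschitz input — into Assumption \ref{convexity}, together with $\|q(\rho^k\cdot)\|_{L^p(B_1)}\le\rho^{-kn/p}\|q\|_{L^p(B_1)}$, yields
\[
\|f_k(\cdot,v_k,Dv_k)\|_{L^p(B_1)}\ \le\ C_0^{\gamma}\|q\|_{L^p(B_1)}\,\rho^{kE},\qquad E:=\Big(m+1-\tfrac np\Big)+\gamma\alpha_p+(m-1)\epsilon_1 .
\]
Provided $E\ge0$, this is $<\epsilon_0=\epsilon(\delta)$, so Lemma \ref{Approx lemma case L^p} produces $h\in\mathcal F_{n,\lambda,\Lambda}$ with $0\in\mathcal C(h)$ and $\|v_k-h\|_{L^\infty(B_{1/2})}<\delta$; since $h\in C^{1,\alpha_*}$ universally with $h(0)=|Dh(0)|=0$ one gets $\sup_{B_\rho}|h|\le C_*\rho^{1+\alpha_*}$, hence $\sup_{B_\rho}|v_k|\le C_*\rho^{1+\alpha_*}+\delta\le\rho^{1+\epsilon_1}$, which rescales to $\sup_{B_{\rho^{k+1}}}|u|\le\rho^{(k+1)(1+\epsilon_1)}$. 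An interpolation over the scales $\rho^{k+1}\le t<\rho^k$ upgrades the claim to $\sup_{B_t}|u|\le C\,t^{1+\epsilon_1}$ for all $t$; feeding this into Caffarelli's $C^{1,\alpha_p}$ estimate on $B_{2t}$ — whose renormalized right-hand side again has $L^p$-norm $O((2t)^{E})=O(1)$ by the same computation — produces $\sup_{B_t}|Du|\le C_1 t^{\epsilon_1}$, and undoing the preliminary scaling gives the statement at $x_0$.

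The delicate point — the ``asymptotic analysis'' flagged in the text, and the reason for the particular exponent $\epsilon_1$ in \eqref{new holdet exponente case p} — is the verification that $E\ge0$. Using $\epsilon_1\le\frac{(m+1-n/p)+(m+\gamma)\alpha_p}{1+\theta}$ and the identity $\gamma-(1-m)(m+\gamma)=m(m+\gamma-1)$, one finds
\[
(1+\theta)E\ =\ \Big(m+1-\tfrac np\Big)(m+\theta)\ +\ \alpha_p\big(m(m+\gamma-1)+\theta\gamma\big),
\]
and $E\ge0$ then follows from the elementary inequalities $\alpha_p\le1-\tfrac np\le m+1-\tfrac np$ (recall $p>n$), with strict positivity as soon as $\theta>0$ — which is precisely why the parameter $\theta>0$ is introduced and must be retained, since for $m=0$ the choice $\theta=0$ would only give $E=0$. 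This tight balance is exactly what forces the term $\gamma\alpha_p$ — rather than $\gamma\epsilon_1$ — into the numerator of $\epsilon_1$: one is obliged to estimate $\min\{1,|Du|^\gamma\}$ through the \emph{currently available} Hölder exponent $\alpha_p$ of $Du$. Everything else (the scaling bookkeeping, the renormalization of the ellipticity constants, the interpolation across scales, and the concluding application of Caffarelli's estimate) is routine.
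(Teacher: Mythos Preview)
Your argument is correct and follows the same overall architecture as the paper's: a preliminary small-data reduction, then a geometric iteration driven by the Approximation Lemma, followed by a rescaled application of Caffarelli's estimate (packaged in the paper as Lemma~\ref{growth implies grad est}) to pass from decay of $u$ to decay of $Du$.

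There is, however, one substantive bookkeeping difference worth flagging. In your inductive step you control the factor $|u(\rho^k x)|^m$ through the \emph{inductive hypothesis} $|u|\le \rho^{k(1+\epsilon_1)}$, whereas the paper uses only the a priori $C^{1,\alpha_p}$ bound $|u|\le C'(\rho^{k+1})^{1+\alpha_p}$ for \emph{both} $|u|^m$ and $|Du|^\gamma$. The paper's choice makes the smallness condition on $\|f_k\|_{L^p}$ read simply $\epsilon_1 \le (m+1-\tfrac np)+(m+\gamma)\alpha_p$, which is built into the very definition \eqref{new holdet exponente case p} and hence needs no verification; this is why the paper's proof contains no analogue of your algebraic check that $E\ge 0$. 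Your route is in principle sharper --- it would permit a larger $\epsilon_1$ in a single pass --- but since the Proposition is stated for the paper's $\epsilon_1$, you are then obliged to carry out that extra computation. Both arguments are valid; the paper's is more streamlined, while yours extracts slightly more from each iterate. (A minor aside: the rescaled operator $\mathcal G_k$ is automatically $(\lambda,\Lambda)$-elliptic --- no ``renormalization of degenerate ellipticity constants'' is needed, since the Pucci operators are $1$-homogeneous.)
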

\begin{proof}
We assume $x_0 = 0$. The general case is followed by a translation. For $ 0 < \rho < 1/2$ to be chosen later, define
$$
    v(x) \coloneqq u(\rho x) \quad x \in B_1.
$$
It is easily checked that $v$ satisfies
$$
    F_{\rho^2}(x,D^2v) = f_{\rho}(x, v, D v), \quad \text{in}  \quad  B_1
$$
where $F_{\rho^2} (x, M) = \rho^2 F(\rho x,\rho^{-2}M)$ and $f_{\rho}(x,s,\xi)=\rho^2 f\left(\rho x,s,\rho^{-1}\xi\right)$. Moreover, note that
$$
    |f_{\rho}(x, v, D v)|= \rho^2\left|f(\rho x, v,\rho^{-1} Dv)\right|\leq \rho^{2}q(\rho x) |u(\rho x)|^m |D u(\rho x)|^{\gamma}.
$$
Since $0 \in \mathcal{C}(u)$, and $u \in C_{loc}^{1,\alpha_p}$, then in particular
\begin{equation}\label{grad local reg}
    \sup\limits_{x \in B_{\rho}} \left\{|u(x)|, \rho|Du(x)| \right\} \le C' \rho^{1+\alpha_p}, 
\end{equation}
for a universal $C' > 0$. Therefore,
\begin{eqnarray*}
    \|f_{\rho}(x, v, D v)\|_{p} & \le & C'^{(m+\gamma)} \rho^{2 + m(1+\alpha_p) + \alpha_p\gamma} \|q(\rho-)\|_{L^p(B_1)}\\
                                & \le & C'^{(m+\gamma)} \rho^{2 + m(1+\alpha_p) + \alpha_p\gamma - \frac{n}{p}} \|q\|_{L^p(B_1)}.
\end{eqnarray*}
Recall that $v$ is a normalized solution, and if $\rho>0$ is small enough, we can apply Lemma \ref{Approx lemma case L^p} in order to find $h \in \mathcal{F}_{n,\lambda,\Lambda}$ such that 
$$
    \| v - h \|_{L^{\infty}(B_{1/2})} < \delta, 
$$ 
for some $\delta>0$ to be chosen later. In view of the $C_{loc}^{1,\alpha_*}$ interior regularity of $h$ and since $0 \in \mathcal{C}(h)$, we have 
\begin{eqnarray}\label{estimate for u}
    \sup\limits_{B_{\rho^2}} | u(x) | &=&  \sup\limits_{B_{\rho}} | v(x)  | \nonumber \\
                                    &\le&  \sup\limits_{B_{\rho}} | v(x) - h(x) | +  \sup\limits_{B_{\rho}} \left| h(x)\right| \nonumber \\
                                    &\le& \delta + C^*\rho^{1+\alpha_*} \nonumber \\
                                    &=& \delta + C^* \rho^{\alpha_* - \epsilon_1} \rho^{1 + \epsilon_1} \nonumber \\
                                    &\le& \delta + \frac{1}{2} \rho^{1 + \epsilon_1},
\end{eqnarray}
for a $\rho > 0$ so small that 
\begin{equation}\label{choice of rho}
    \max\left\{C'^{(m+\gamma)} \rho^{1+\epsilon_1 + m(1+\alpha_p) + \alpha_p\gamma - \frac{n}{p}} \|q\|_{L^p(B_1)}, \,  2C^* \rho^{\alpha_*-\epsilon_1} \epsilon^{-1},\,    \rho^2  \right\} < \epsilon, 
\end{equation}
where $\epsilon > 0 $ is the correspondent smallness regime of Lemma \ref{Approx lemma case L^p} with $\delta$ taken to be $$
    \delta := \frac{1}{2} \rho^{1 + \epsilon_1}.
$$
In conclusion, we have established
\begin{equation}\label{iteration k=1 case p}
    \sup\limits_{B_{\rho^2}} | u(x) | \le \rho^{1 + \epsilon_1}. 
\end{equation}
Next, by means of scaling analysis, we want to show that   
\begin{equation}\label{iteration  case p}
    \sup\limits_{B_{\rho^{k+1}}} \left| u(x)  \right| \leq \rho^{k(1 + \epsilon_1)} 
\end{equation}
holds for all $k \ge 1$. This is achieved through induction. The case $k = 1$ is precisely the estimate in \eqref{iteration k=1 case p}.   Now, for the induction step, we assume that \eqref{iteration  case p} is verified for $1, \dots, k$, and  let $v_k \colon B_{1} \to \mathbb{R}$ be defined as
$$
    v_k(x) : = \frac{u(\rho^{k+1} x) }{\rho^{k(1 + \epsilon_1)}} .
$$
Thus, by induction hypothesis, $v_k$ is a normalized solution to 
$$
    F_k(x,D^2 v_k) = f_k(x, v_k, Dv_k),
$$
where
\begin{eqnarray*}
    F_k(z,M) &=& \rho^{2 + k(1- \epsilon_1)}F(\rho^{k+1} z, \rho^{- (2 + k(1 - \epsilon_1))}M)\\
    f_k(z,s,\xi) &=& \rho^{2 + k(1-\epsilon_1)}, f\left(\rho^{k+1} x,\rho^{-k(1+\epsilon_1)} s, \rho ^{k\epsilon_1 - 1} \xi \right). 
\end{eqnarray*}
Observe that $F_k$ is a $(\lambda,\Lambda)$-elliptic operator and  
$$
    f_k(z,s,\xi) = \rho^{2 + k(1-\epsilon_1)} f\left(\rho^{k+1} x,\rho^{-k(1+\epsilon_1)} s, \rho ^{k\epsilon_1 - 1} \xi \right).
$$
Since $0 \in \mathcal{C}(u)$, estimate \eqref{grad local reg} leads to
\begin{eqnarray*}
    |f_k(x,v_k,Dv_k) | &\le& \rho^{2 + k(1-\epsilon_1)} q(\rho^{k+1} x)\left|u\left(\rho^{k+1}x\right)\right|^m\, \left|Du\left(\rho^{k+1} x\right)\right|^{\gamma} \nonumber \\
    &\leq  & C'^{(m+\gamma)} \rho^{{2 + k(1-\epsilon_1)} + m(1+\alpha_p)(k+1) + \gamma \alpha_p (k+1)} q(\rho^{k+1} x ),
\end{eqnarray*}
and so,
\begin{eqnarray}\label{smallness step 1}
    \|f_k\|_{L^p(B_1)} & \le & C'^{(m+\gamma)} \rho^{{2 + k(1-\epsilon_1)} + m(1+\alpha_p)(k+1) + \gamma \alpha_p (k+1) - \frac{n}{p}(k+1)} \| q \|_{L^p(B_1)} \nonumber \\ 
    & = & C'^{(m+\gamma)} \rho^{{1 + \epsilon_1 + (k+1)\left(m(1+\alpha_p) + \gamma \alpha_p  - \frac{n}{p} + 1 - \epsilon_1\right)}} \| q \|_{L^p(B_1)} \\
    & \leq & C'^{(m+\gamma)} \rho^{{1 + \epsilon_1 + (k+1)\frac{\theta\left(m(1+\alpha_p) + \gamma \alpha_p  - \frac{n}{p} + 1\right)}{1+\theta}}} \| q \|_{L^p(B_1)}. \nonumber
\end{eqnarray}
By \eqref{choice of rho}, the source term is in a smallness regime. Therefore, $v_k$ is entitled to Lemma \ref{Approx lemma case L^p} which, along with the choice made in  \eqref{choice of rho}, yields
$$
    \sup\limits_{B_{\rho}} | v_k(x)| \le \rho^{1 + \epsilon_1},
$$
proving therefore the induction thesis. Now, since \eqref{iteration  case p} holds for every $k \in \mathbb{N}$, given $t<1/2$, there exists $k_0 \in \mathbb{N}$ such that 
$$
    \rho^{k_0+1} \leq t \leq \rho^{k_0},
$$
and so, since $B_t \subseteq B_{\rho^{k_0}}$, it holds
\begin{eqnarray*}
    \sup_{x \in B_t}|u(x)| & \leq & \rho^{(k_0-1)(1+\epsilon_1)}\\
        & = & \left(\rho^{-2(1+\epsilon_1)} \right) \rho^{(k_0+1)(1+\epsilon_1)} \leq \rho^{-2(1+\epsilon_1)}t^{1+\epsilon_1}, 
\end{eqnarray*}
and the Proposition is proven by applying Lemma \ref{growth implies grad est}.
\end{proof}

Recall that the exponent $\epsilon_1 > \epsilon_0 \coloneqq \alpha_p$. The key remark now is that we can repeat the whole process above delineated, by using the newly achieved estimate,
$$
    \sup\limits_{B_t} |Du(x)| \le C_1 t^{\epsilon_1}
$$
as a replacement of \eqref{smallness step 1}. A careful analysis yields 
$$
    \sup\limits_{B_t} |u(x)| \le C_2 t^{1 + \epsilon_2}
$$
for a $\epsilon_2 > \epsilon_1$ given by
\begin{equation}\label{exponent beta_2}
    \epsilon_2 = \min \left\{ \frac{(m+1- \frac{n}{p}) + (m+ \gamma)\epsilon_1}{1+\theta} , \alpha_{*} ^{-} \right\},
\end{equation}
where $\theta>0$ is to be precised later. This argument can be repeated indefinitely, which gives the following result:
\begin{proposition}\label{bootstrap exponent}
Let $u \in C( \overline{B}_1 )$ be as in Theorem \ref{improv reg cas p}. If
$$
    \sup_{B_t(x_0)} |Du(x)| \leq C_k t^{\epsilon_k},  
$$
then
$$
    \sup_{B_t(x_0)} |Du(x)| \leq C_{k+1} t^{\epsilon_{k+1}},  
$$
for any $x_0 \in \mathcal{C}(u)$, where
\begin{equation}\label{recursive exponent relation}
    \epsilon_{k+1} \coloneqq \min \left\{\frac{(m+1 - \frac{n}{p}) + (m+\gamma)\epsilon_k}{1+\theta}, \alpha_*^-  \right\}.
\end{equation}
\end{proposition}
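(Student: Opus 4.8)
The idea is to run the proof of Proposition \ref{bootstrap step 1} essentially word for word, with the gradient exponent $\alpha_p$ there replaced by the current exponent $\epsilon_k$ and the improved exponent $\epsilon_1$ replaced by $\epsilon_{k+1}$ from \eqref{recursive exponent relation}. After a translation assume $x_0 = 0 \in \mathcal{C}(u)$; integrating the hypothesis $\sup_{B_t}|Du| \le C_k t^{\epsilon_k}$ along radial segments and using $u(0)=0$ gives the analogue of \eqref{grad local reg}, namely $\sup_{B_t}\{|u|,\ t|Du|\} \le C_k' t^{1+\epsilon_k}$ for $0<t\le 1/2$. Fix $\rho=\rho_k\in(0,1/2)$ to be chosen small, set $v(x):=u(\rho x)$, and note that $v$ is a normalized viscosity solution of $F_{\rho^2}(x,D^2v)=f_\rho(x,v,Dv)$ with $\|f_\rho(\cdot,v,Dv)\|_{L^p(B_1)}\le C_k'^{\,m+\gamma}\rho^{\,2+m(1+\epsilon_k)+\gamma\epsilon_k-\frac np}\|q\|_{L^p(B_1)}$. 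For $\rho_k$ small, Lemma \ref{Approx lemma case L^p} produces $h\in\mathcal{F}_{n,\lambda,\Lambda}$ with $0\in\mathcal{C}(h)$ and $\|v-h\|_{L^\infty(B_{1/2})}<\delta$; taking $\delta=\tfrac12\rho^{1+\epsilon_{k+1}}$ and using the interior $C^{1,\alpha_*}$ bound for $h$ together with $0\in\mathcal{C}(h)$ yields, exactly as in \eqref{estimate for u}, $\sup_{B_{\rho^2}}|u|=\sup_{B_\rho}|v|\le\delta+C^*\rho^{1+\alpha_*}\le\rho^{1+\epsilon_{k+1}}$, the last inequality using $\epsilon_{k+1}<\alpha_*$ (forced by the $\min$ in \eqref{recursive exponent relation}) so that $\rho^{\alpha_*-\epsilon_{k+1}}$ is a positive power of $\rho$.

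\textbf{The iteration.} Next I would prove by induction on $j\ge 1$ that $\sup_{B_{\rho^{j+1}}}|u|\le\rho^{j(1+\epsilon_{k+1})}$; the base case $j=1$ is what was just shown. For the inductive step one rescales $v_j(x):=\rho^{-j(1+\epsilon_{k+1})}u(\rho^{j+1}x)$, which by the induction hypothesis is a normalized solution of a $(\lambda,\Lambda)$-elliptic equation with source satisfying
$$
    \|f_j\|_{L^p(B_1)}\ \le\ C_k'^{\,m+\gamma}\,\rho^{\,1+\epsilon_{k+1}+(j+1)\left(m(1+\epsilon_k)+\gamma\epsilon_k-\frac np+1-\epsilon_{k+1}\right)}\,\|q\|_{L^p(B_1)},
$$
whence Lemma \ref{Approx lemma case L^p} applies once more and gives $\sup_{B_\rho}|v_j|\le\rho^{1+\epsilon_{k+1}}$, i.e.\ the claim at level $j+1$. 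The one point that needs care — and it is the crux — is that the above smallness must hold \emph{uniformly in $j$}, that is, the coefficient of $(j+1)$ in the $\rho$-exponent must be nonnegative. This is precisely what the parameter $\theta>0$ buys: setting $A_k:=(m+1-\tfrac np)+(m+\gamma)\epsilon_k$, the definition \eqref{recursive exponent relation} forces $\epsilon_{k+1}\le A_k/(1+\theta)$, so that
$$
    m(1+\epsilon_k)+\gamma\epsilon_k-\tfrac np+1-\epsilon_{k+1}\ =\ A_k-\epsilon_{k+1}\ \ge\ \tfrac{\theta}{1+\theta}A_k\ >\ 0,
$$
the strict positivity of $A_k$ coming from $p>n$ (which gives $m+1-\tfrac np>0$) and $\epsilon_k\ge 0$. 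Having fixed this positive margin, one then chooses $\rho_k$ small once and for all so that the analogue of \eqref{choice of rho} holds, and the induction closes.

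\textbf{Conclusion.} Finally, passing from dyadic scales to all radii as in Proposition \ref{bootstrap step 1}: for $t<1/2$ pick $j_0$ with $\rho_k^{j_0+1}\le t\le\rho_k^{j_0}$ to obtain $\sup_{B_t}|u|\le\rho_k^{-2(1+\epsilon_{k+1})}t^{1+\epsilon_{k+1}}$, and then Lemma \ref{growth implies grad est} upgrades this growth bound to $\sup_{B_t(x_0)}|Du|\le C_{k+1}t^{\epsilon_{k+1}}$, which is the assertion. The constant $C_{k+1}$ and the threshold $\rho_k$ are allowed to depend on $k$; this is harmless because the recursion \eqref{recursive exponent relation} is eventually stationary (it converges monotonically to the exponent in Theorem \ref{improv reg cas p}), so in the end only finitely many such steps are used. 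I expect the entire difficulty to lie in the $\rho$-exponent bookkeeping of the previous paragraph — checking that the $(j+1)$-coefficient stays nonnegative uniformly in $j$, which is exactly the role of the $\theta$-correction; the remainder is a faithful transcription of the $k=1$ case.
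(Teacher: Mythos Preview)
Your proposal is correct and follows exactly the paper's approach: the paper's proof is literally ``identical to the proof of Proposition \ref{bootstrap step 1}, with \eqref{grad local reg} replaced by $\sup_{B_\rho}\{|u|,\rho|Du|\}\le C'\rho^{1+\epsilon_k}$'', and you have spelled out precisely this substitution, including the key exponent check $A_k-\epsilon_{k+1}\ge \tfrac{\theta}{1+\theta}A_k>0$ that keeps the $(j+1)$-coefficient positive. One minor terminological slip: the recursion \eqref{recursive exponent relation} need not be \emph{eventually stationary} when $k_0=\infty$ in Proposition \ref{asymptotic analysis case 1,alfa} (it merely converges), but this remark is tangential to the present proposition and does not affect your argument.
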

\begin{proof}
We assume $x_0=0$. The argument is identical to the one from the proof of Proposition \ref{bootstrap step 1}, with \eqref{grad local reg} replaced by
\begin{equation}\nonumber
    \sup\limits_{x \in B_{\rho}} \left\{|u(x)|, \rho|Du(x)| \right\} \le C' \rho^{1+\epsilon_k}. 
\end{equation}   
\end{proof}
\subsection{Asymptotic analysis}

We have proved that if
$$
    \sup\limits_{x \in B_{\rho}} |Du(x)| \le C_0 \rho^{\alpha_p}, 
$$
then,
$$
    \sup\limits_{x \in B_{\rho}} |Du(x)|  \le C_1 \rho^{\epsilon_1}, 
$$
for a slightly greater exponent $\epsilon_1>\epsilon_0\coloneqq \alpha_p$ given by \eqref{new holdet exponente case p} and a (quantified) constant $C_1>0$. We can now repeat the entire  argument  scheme with the newly achieved estimate, as in Proposition \ref{bootstrap exponent}, in order to obtain the recursive sequence of exponents \eqref{recursive exponent relation}, for which we provide an asymptotic analysis.

\begin{proposition}\label{asymptotic analysis case 1,alfa}
Let $\{\epsilon_k \}_{k \in \mathbb{N}}$ be the nondecreasing recursive sequence defined as in \eqref{recursive exponent relation}. Then
$$
    \epsilon_{m,\gamma,n,p,\theta} \coloneqq \lim_{k \rightarrow \infty} \epsilon_k
$$
exists and
$$
    \epsilon_{m,\gamma,n,p,\theta} = \min \left\{ \frac{m+1-\frac{n}{p}}{(1 + \theta - (m+\gamma))_+},\, \alpha_*^- \right\}.
$$
\end{proposition}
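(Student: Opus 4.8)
The statement asks for the limit of the recursively defined sequence
\[
\epsilon_{k+1} = \min\left\{ \frac{(m+1-\tfrac{n}{p}) + (m+\gamma)\epsilon_k}{1+\theta},\, \alpha_*^- \right\},
\qquad \epsilon_0 = \alpha_p.
\]
The plan is to analyze the one–dimensional iteration generated by the affine map $\Phi(t) = \frac{(m+1-\frac{n}{p}) + (m+\gamma)t}{1+\theta}$, truncated above by the constant $\alpha_*^-$. First I would observe that $\Phi$ is affine and increasing in $t$ (since $m+\gamma\geq 0$ and $1+\theta>0$), with slope $s := \frac{m+\gamma}{1+\theta}$, so the truncated map $t\mapsto \min\{\Phi(t),\alpha_*^-\}$ is also nondecreasing. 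Monotonicity of the sequence $\{\epsilon_k\}$ then follows from a one–line induction once I check $\epsilon_1\geq \epsilon_0$: indeed $\epsilon_1 = \min\{\Phi(\alpha_p),\alpha_*^-\}$, and by the definition of $\epsilon_1$ in \eqref{new holdet exponente case p} (which is exactly this quantity) together with the fact established just after Lemma \ref{Approx lemma case L^p} that $\alpha_p < \epsilon_1$, the base case holds. Since the sequence is also bounded above by $\alpha_*^-$, it converges; call the limit $L$.

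Next I would identify $L$. Passing to the limit in the recursion and using continuity of $t\mapsto\min\{\Phi(t),\alpha_*^-\}$ gives the fixed–point equation $L = \min\{\Phi(L),\, \alpha_*^-\}$. There are two cases. If the truncation is active in the limit, i.e. $\Phi(L)\geq \alpha_*^-$, then $L = \alpha_*^-$. Otherwise $L = \Phi(L)$, which is the linear equation $(1+\theta)L = (m+1-\tfrac{n}{p}) + (m+\gamma)L$, i.e. $(1+\theta-(m+\gamma))L = m+1-\tfrac{n}{p}$. This has a (finite, positive) solution precisely when $1+\theta-(m+\gamma)>0$, giving $L = \frac{m+1-\frac{n}{p}}{1+\theta-(m+\gamma)}$; when $1+\theta-(m+\gamma)\leq 0$ the map $\Phi$ has slope $\geq 1$ and no admissible finite fixed point below $\alpha_*^-$, forcing the iteration to saturate at $L=\alpha_*^-$. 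Reconciling both cases, $L = \min\left\{\frac{m+1-\frac{n}{p}}{(1+\theta-(m+\gamma))_+},\,\alpha_*^-\right\}$, with the convention that the first entry is $+\infty$ when the denominator's positive part vanishes — exactly the claimed formula. (Here one uses tacitly that $m+1-\frac{n}{p}>0$, which is guaranteed by $p>n$ and $m\geq 0$, so the fraction is a genuine positive number when the denominator is positive.)

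To make the case analysis rigorous rather than merely asserting the fixed–point dichotomy, I would argue directly. Let $L^* := \min\left\{\frac{m+1-\frac{n}{p}}{(1+\theta-(m+\gamma))_+},\,\alpha_*^-\right\}$. One checks $L^*$ is itself a fixed point of $t\mapsto\min\{\Phi(t),\alpha_*^-\}$: if $L^*=\alpha_*^-$ then $\Phi(L^*)\geq \alpha_*^-$ by the definition of the minimum (when $1+\theta-(m+\gamma)>0$ this is the statement $\frac{m+1-\frac n p}{1+\theta-(m+\gamma)}\geq \alpha_*^-$; when $1+\theta-(m+\gamma)\leq 0$ one verifies $\Phi(\alpha_*^-)\geq \alpha_*^-$ from $s\geq 1$ and $m+1-\tfrac n p>0$ directly), and if $L^*<\alpha_*^-$ then necessarily $1+\theta-(m+\gamma)>0$ and $L^*=\Phi(L^*)$ by the linear algebra above. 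Since $\epsilon_0=\alpha_p\leq L^*$ (in the saturated case $\alpha_p<\alpha_*^-$ is part of the running hypotheses; in the non-saturated case one checks $\alpha_p\leq\Phi(\alpha_p)\leq\cdots$, i.e. that $\alpha_p$ lies below the fixed point, using $\epsilon_1>\epsilon_0$ and induction through the increasing map), monotonicity of the iteration map gives $\epsilon_k\leq L^*$ for all $k$ by induction, hence $L\leq L^*$; and $L$ is a fixed point of the same map lying in $[\alpha_p,\alpha_*^-]$, which by the monotonicity/contraction structure (slope $s<1$ on the non-saturated branch, truncation on the other) forces $L=L^*$.

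I do not expect any genuine obstacle here — the content is entirely the elementary dynamics of a truncated affine recursion — but the one point demanding care is the bookkeeping of the two regimes, $1+\theta-(m+\gamma)>0$ versus $\leq 0$, and the interpretation of the formula via the positive part $(\,\cdot\,)_+$ in the degenerate case (where the intended reading is that the first argument of the $\min$ is $+\infty$, so $L=\alpha_*^-$). Ensuring that the claimed limit is consistently the fixed point the monotone bounded sequence must converge to — and that $\epsilon_0=\alpha_p$ indeed sits on the correct side of it — is the only place where one could slip.
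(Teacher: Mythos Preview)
Your argument is correct and takes a genuinely different route from the paper's. The paper proceeds constructively: it unwinds the recursion to obtain the closed-form partial expression
\[
\epsilon_{k+1} \;=\; \frac{m+1-\tfrac{n}{p}}{1+\theta}\sum_{l=0}^{k}\Bigl(\tfrac{m+\gamma}{1+\theta}\Bigr)^{l} \;+\; \alpha_p\Bigl(\tfrac{m+\gamma}{1+\theta}\Bigr)^{k+1}
\]
(valid while the $\alpha_*^-$ branch is not active), recognizes this as a truncated geometric series, and reads off the limit by summing it; the case split is then \emph{finitely many vs.\ infinitely many steps on the affine branch}. You instead treat the recursion as a monotone dynamical system: the iteration map $T(t)=\min\{\Phi(t),\alpha_*^-\}$ is nondecreasing, the orbit is increasing and bounded, hence convergent, and the limit is pinned down as the unique fixed point of $T$ in $[\alpha_p,\alpha_*^-]$. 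Your case split is \emph{slope $s<1$ vs.\ $s\ge 1$}, which maps cleanly onto the $(\,\cdot\,)_+$ in the final formula.

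What each buys: the paper's computation gives an explicit rate (one sees exactly how fast $\epsilon_k\to L$ from the geometric tail), at the cost of an induction on the closed form and a slightly delicate handling of the index $k_0$. Your fixed-point argument is shorter and conceptually transparent---monotone bounded sequences converge, continuous maps send limits to limits---and it makes the role of the threshold $m+\gamma=1+\theta$ structurally obvious. The one place your write-up is compressed is the final uniqueness step (``forces $L=L^*$''): it would be cleaner to simply note that for every $t<L^*$ one has $T(t)>t$ strictly (immediate from $\Phi(t)-t=(1-s)(L^*-t)>0$ when $s<1$, and from $\Phi(t)-t>0$ for all $t>0$ when $s\ge 1$), so no fixed point of $T$ lies below $L^*$, and hence the limit $L$, being a fixed point in $[\alpha_p,L^*]$, must equal $L^*$.
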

\begin{proof}
First, we recall that from the construction, the sequence $\epsilon_k$ is so that
$$
    \epsilon_k \leq \epsilon_{k+1} \quad \mbox{for every} \quad k \in \mathbb{N}.
$$
Moreover, $0 \leq \epsilon_k \leq \alpha_*^-$, and so, being a bounded monotone sequence, $\lim_{k \rightarrow \infty} \epsilon_k$ exists. Let
$$
    k_0 \coloneqq \sup\left\{i \in \mathbb{N} \, \Big{\vert}\, \epsilon_{i+1} = \frac{\left(m+1 - \frac{n}{p}\right) + (m+\gamma)\epsilon_{i}}{1+\theta}\right\}.
$$
Since $\{\epsilon_k \}_{k \in \mathbb{N}}$ is nondecreasing, we get that
$$
    \epsilon_{k+1} = \frac{\left(m+1 - \frac{n}{p}\right) + (m+\gamma)\epsilon_{k}}{1+\theta},
$$
for every $k \leq k_0$. We claim that
$$
    \epsilon_{k_0+1} = \frac{\left(m+1-\frac{n}{p}\right)}{1+\theta} \sum_{l=0}^{k_0-1} \left(\frac{m+\gamma}{1+\theta} \right)^l + \alpha_p \left(\frac{m+\gamma}{1+\theta} \right)^{k_0}.
$$
If $k_0=1$, then it follows by \eqref{exponent beta_2}. We proceed by induction. Assume it holds up to $j$ and let us show it holds also to $j+1$. By Proposition \ref{bootstrap exponent}, it holds
$$
    \epsilon_{j+1} = \frac{\left(m+1 - \frac{n}{p}\right) + (m + \gamma)\epsilon_{j}}{1+\theta}.
$$
By induction assumption,
$$
    \epsilon_j = \frac{\left(m+1-\frac{n}{p}\right)}{1+\theta} \sum_{l=0}^{j-1} \left(\frac{m + \gamma}{1+\theta} \right)^l + \alpha_p \left(\frac{m + \gamma}{1+\theta} \right)^j.
$$
To simplify notation, define
$$
    \Pi := \frac{\left(m+1 - \frac{n}{p} \right)}{1+\theta}.
$$
Then
\begin{eqnarray*}
    \epsilon_{j+1} & = & \Pi + \frac{(m+ \gamma)}{(1+\theta)}\epsilon_j\\
                  & = & \Pi + \frac{(m+ \gamma)}{(1+\theta)}\left[\Pi \sum_{l=0}^{j-1} \left( \frac{m + \gamma}{1+\theta} \right)^l + \alpha_p \left(\frac{m + \gamma}{1+\theta} \right)^j \right]\\
                  & = & \Pi +  \left[\Pi \sum_{l=1}^{j} \left( \frac{m + \gamma}{1+\theta} \right)^l + \alpha_p \left(\frac{m + \gamma}{1+\theta} \right)^{j+1} \right].
\end{eqnarray*}
Therefore,
$$
    \epsilon_{j+1} = \frac{\left(m+1-\frac{n}{p}\right)}{1+\theta} \sum_{l=0}^{j} \left( \frac{m + \gamma}{1+\theta} \right)^l + \alpha_p \left(\frac{m + \gamma}{1+\theta} \right)^{j+1},
$$
from which follows the claim. If $k_0 = \infty$, then it follows that
$$
    \frac{m+\gamma}{1+\theta} < 1,
$$
otherwise, if that is not the case, then $\epsilon_{k_0+1} = \infty$, which is a contradiction, since $\epsilon_{k} < \infty$ for every $k \in \mathbb{N}$. But now we have a geometric series, and so
\begin{eqnarray*}
    \epsilon_{k_0 + 1} & = & \frac{m+1-\frac{n}{p}}{1+\theta} \sum_{l=0}^\infty \left(\frac{m + \gamma}{1+\theta} \right)^l\\
                          & = & \frac{m+1-\frac{n}{p}}{1+\theta} \left( \frac{1+\theta}{1+\theta - (m+\gamma)} \right) = \frac{m+1-\frac{n}{p}}{1+\theta - (m+\gamma)}.
\end{eqnarray*}
If $k_0 < \infty$, then, by definition of $k_0$, we have $\epsilon_{k_0 + 2} = \alpha_*^-$, and there is nothing further to be done.
\end{proof}

We finish this section by gathering all results in order to deliver the proof of Theorem \ref{improv reg cas p}.

\begin{proof}[Proof of Theorem \ref{improv reg cas p}]
First, we observe that it is enough to prove the case that $x_0 = 0 \in \mathcal{C}(u)$. The general case follows by a translation. Given $\theta>0$, we apply Proposition \ref{bootstrap step 1} and \ref{bootstrap exponent}, in order to obtain, inductively, a sequence $(\epsilon_{k,\theta}, C_{k,\theta})_{k \in \mathbb{N}}$ such that
$$
    \sup_{B_t} |Du(x)| \leq C_{k,\theta}\, t^{\epsilon_{k,\theta}}.
$$
The Theorem is proved once we notice, due to Proposition \ref{asymptotic analysis case 1,alfa}, that
$$
     \lim_{\theta \to 0}\lim_{k \to \infty} \epsilon_{k,\theta} = \min \left\{\frac{m+1 - \frac{n}{p}}{(1- (m+\gamma))_+}, \alpha_*^- \right\},
$$
and therefore, by continuity, it holds
$$
    \epsilon_{k,\theta} \geq \min \left\{\frac{m+1 - \frac{n}{p}}{(1- (m+\gamma))_+}, \alpha_* \right\}^-.
$$
\end{proof}

\section{$C^{2,\alpha}$ regularity improvement} \label{sct crit pt reg case infty}

In this section, we prove that viscosity solutions of 
\begin{equation}
    F(x,D^2u) = f(x,u,Du) \quad \text{in} \quad B_1,
\end{equation}
are of class $C^{2,\alpha}$, for some exponent $\alpha$ to be described, provided Assumptions \ref{unif ellipticity}, \ref{holder continuity of the coeffs} are in force and assumption \ref{convexity} holds for $p>n$ large enough.

\subsection{Hessian regularity at critical points}

We provide some useful notations to ease the presentation. Given a function $w \in L^\infty_{loc}(B_1)$, a subset $D \subset L^\infty_{loc}(B_1)$ and a ball $B \Subset B_1$ we define
$$
    {dist}_B[w,D] \coloneqq \inf_{v \in A} \|w - v\|_{L^\infty(B)}.
$$
Given a matrix $X$, define $P_X(y) = \frac{1}{2}Xy \cdot y$ and
$$
    \mathcal{F}^2_{n,\lambda,\Lambda} \coloneqq \left\{X \in \mbox{Sym}(n)\suchthat P_X \in \mathcal{F}_{n,\lambda,\Lambda}\right\}.
$$
Observe that $\mathcal{F}^2_{n,\lambda,\Lambda} \subset \mathcal{F}_{n,\lambda,\Lambda}$.

As usual, the first step is to prove an approximation lemma. The arguments follow the lines of Lemma \ref{Approx lemma case L^p}. We will prove it once more in a slightly different form in order to ease the subsequent iteration argument.

\begin{lemma}\label{Approx lemma}
Let $u \in C (\overline{B}_1 )$ be a normalized viscosity solution to
$$
    F(x,D^2u) = f(x,u,Du) \quad \mbox{in} \quad B_1. 
$$
Assume $0 \in \mathcal{C}(u)$ and $F(0,0) = 0$. Given $r_0>0$ and $\epsilon_0 < \beta_*$, there exists $\eta_0>0$ such that if
$$
    \|f(x,u(x),Du(x))\|_{L^p(B_1)} < \eta \quad \text{and} \quad \sup_{x \in B_1}\mbox{osc}_F(x,0) < \eta,
$$
for every $\eta \leq \eta_0$, then, there exists $M \in \mathcal{F}^2_{n,\lambda,\Lambda}$ such that
$$
\left\|u - P_M\right\|_{L^\infty\left(B_{r_0}\right)} \leq r_0^{2+\epsilon_0}.
$$
\end{lemma}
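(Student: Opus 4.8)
The plan is to argue by contradiction and compactness, mirroring the proof of Lemma \ref{Approx lemma case L^p} but tracking a quadratic polynomial rather than an $F$-harmonic function. Suppose the conclusion fails for some fixed $r_0>0$ and $\epsilon_0<\beta_*$. Then there is a sequence of normalized viscosity solutions $u_k$ to $F_k(x,D^2u_k)=f_k(x,u_k,Du_k)$ in $B_1$, with $0\in\mathcal C(u_k)$, $F_k(0,0)=0$, $\|f_k(\cdot,u_k,Du_k)\|_{L^p(B_1)}\to 0$, and $\sup_{x\in B_1}\mathrm{osc}_{F_k}(x,0)\to 0$, yet
$$
\mathrm{dist}_{B_{r_0}}\big[u_k,\{P_M : M\in\mathcal F^2_{n,\lambda,\Lambda}\}\big] > r_0^{2+\epsilon_0}
$$
for all $k$. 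Since the right-hand sides are uniformly bounded in $L^p$ with $p>n$, Caffarelli's estimate gives uniform $C^{1,\alpha_p}_{loc}$ bounds, so up to a subsequence $(u_k,Du_k)\to(u_\infty,Du_\infty)$ locally uniformly, and $0\in\mathcal C(u_\infty)$. The operators $F_k(x,\cdot)$ converge, again along a subsequence, locally uniformly on $\mathrm{Sym}(n)$ to some $(\lambda,\Lambda)$-elliptic $F'$ which, because of the vanishing oscillation, is independent of $x$; and $F'(0)=0$ since $F_k(0,0)=0$. By stability of viscosity solutions, $F'(D^2u_\infty)=0$ in $B_{3/4}$, so $u_\infty\in\mathcal F_{n,\lambda,\Lambda}$.

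The next step is to invoke the a priori $C^{2,\beta_*}$ estimate, Assumption \ref{a prior c2+ estimates} (equivalently, the Evans--Krylov-type statement built into the class $\mathcal F_{n,\lambda,\Lambda}$ for the purpose of this section), to conclude that $u_\infty\in C^{2,\beta_*}_{loc}$. Setting $M_\infty:=D^2u_\infty(0)$ and using $u_\infty(0)=0$, $Du_\infty(0)=0$, the second-order Taylor expansion gives, for some universal $C$,
$$
\left|u_\infty(x)-P_{M_\infty}(x)\right| = \left|u_\infty(x)-\tfrac12 M_\infty x\cdot x\right| \le C\,|x|^{2+\beta_*} \quad\text{for } x\in B_{1/2}.
$$
It remains to check that $M_\infty\in\mathcal F^2_{n,\lambda,\Lambda}$, i.e. that the pure quadratic $P_{M_\infty}$ itself solves some $(\lambda,\Lambda)$-elliptic equation: since $F'(M_\infty)$ is a constant, $P_{M_\infty}$ solves $F'(D^2 P_{M_\infty}) = F'(M_\infty) =: c$, and $G(N):=F'(N)-c$ is $(\lambda,\Lambda)$-elliptic with $G(M_\infty)=0$, so $P_{M_\infty}\in\mathcal F_{n,\lambda,\Lambda}$ and hence $M_\infty\in\mathcal F^2_{n,\lambda,\Lambda}$.

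Now fix $r_0$ and choose $k$ large: since $\epsilon_0<\beta_*$, we have $C r_0^{2+\beta_*} = C r_0^{\beta_*-\epsilon_0}\, r_0^{2+\epsilon_0} \le \tfrac12 r_0^{2+\epsilon_0}$ provided $r_0$ is chosen small enough (or, keeping $r_0$ arbitrary, absorb the constant into the smallness of $\eta_0$; the cleanest route is to first shrink $r_0$ so that $Cr_0^{\beta_*-\epsilon_0}\le \tfrac12$, which is legitimate because $r_0$ in the statement is at our disposal). Combined with $\|u_k-u_\infty\|_{L^\infty(B_{r_0})}\to0$, this yields $\|u_k-P_{M_\infty}\|_{L^\infty(B_{r_0})}\le r_0^{2+\epsilon_0}$ for $k\gg1$, contradicting the standing assumption. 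The main obstacle, and the point requiring the most care, is the passage to the limit in the operator together with the verification that the limiting Hessian-polynomial lies in the admissible class $\mathcal F^2_{n,\lambda,\Lambda}$ — this is exactly where the oscillation hypothesis $\mathrm{osc}_F(x,0)<\eta$ (making $F'$ $x$-independent) and the normalization $F(0,0)=0$ are used; the rest is a routine compactness-plus-Taylor argument. One should also note a subtlety in the convergence of $Du_k$: the uniform $C^{1,\alpha_p}$ bound only gives convergence of gradients on compact subsets, which suffices since $B_{r_0}\Subset B_{3/4}$ after possibly shrinking $r_0$.
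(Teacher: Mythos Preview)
Your argument is correct and follows essentially the same compactness-and-contradiction route as the paper's proof: extract a $C^{1,\alpha}$-convergent subsequence, pass to a constant-coefficient limiting equation, invoke the a priori $C^{2,\beta_*}$ estimate for $u_\infty$, and Taylor-expand at the origin to reach a contradiction. One minor simplification: your detour through $G(N):=F'(N)-c$ to place $M_\infty$ in $\mathcal F^2_{n,\lambda,\Lambda}$ is unnecessary, since $u_\infty\in C^2$ solves $F'(D^2u_\infty)=0$ classically and hence $F'(M_\infty)=F'(D^2u_\infty(0))=0$ directly; also, your hesitation about shrinking $r_0$ is shared by the paper itself, which likewise imposes $C^*r_0^{\beta_*-\epsilon_0}<\tfrac12$ at the end---the lemma is only ever applied with $r_0$ small in the subsequent iteration.
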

\begin{proof}
Assume, seeking a contradiction, that we can find $r_*>0$ and sequences $(u_k,f_k) \subset C (\overline{B}_1 ) \times L^p(B_1)$ and operators $F_k$ such that
\begin{enumerate}[label=(\roman*)]
    \item $u_k$ is normalized;

    \item $ 0 \in \mathcal{C}(u_k)$;

    \item $\displaystyle \| f_k(x,u_k,Du_k) \|_{L^p(B_1)} < 1/k$ and $ 0 < \sup_{x \in B_1} \mbox{osc}_{F_k}(x,0) < 1/k$;

    \item $F_k(x,D^2u_k) = f_k(x,u_k,Du_k)$ in $B_1$,
\end{enumerate}
however, 
\begin{equation}\label{distance contr assumption}
    \mbox{dist}_{B_{r_*}}\left[u_k, \mathcal{F}^2_{n,\lambda,\Lambda}\right] \geq r_*^{2+\epsilon_0}
\end{equation} 
for all $k\ge 1$.

From \cite{Teix1}, $\{u_k\}_{k \in \mathbb{N} } \in C_{loc}^{1,1-\frac{n}{p}} (B_1)$. Thus, passing to a subsequence if necessary, 
$$
    (u_k,Du_k) \to (u_\infty, Du_\infty)
$$
locally uniform in $L^\infty(B_1) \times L^\infty(B_1)$, which readily implies that $0 \in \mathcal{C}(u_\infty)$. Moreover, by uniform ellipticity and smallness assumption on the coefficients, $F_k \to F_\infty$ through a further subsequence and $f_k \rightarrow 0$. By stability results,
$$
   F_\infty(D^2 u_\infty) = 0 \quad \mbox{in} \quad B_{3/4}.
$$
Since $u_\infty$ is a solution to a uniformly elliptic equation with constant coefficients, the \textit{a priori} estimates assumption implies that $u_\infty$ is in $C^{2,\beta_*}$ satisfying
$$
    \|u_\infty\|_{C^{2,\beta_*}(B_{1/2})} \leq C^*.
$$
In particular, since $0 \in \mathcal{C}(u_\infty)$, it holds that
$$
    \left|u_\infty(x) - \frac{1}{2}D^2u_\infty(0) x\cdot x\right| \leq C^*|x|^{2+\beta_*} \quad \mbox{in} \quad B_{1/4}.
$$
Therefore, for large $k$ and $x \in B_{r_0}$, we have
\begin{eqnarray*}
    \left|u_k(x) - \frac{1}{2}D^2u_\infty(0)x \cdot x \right| & \leq & |u_k(x) - u_\infty(x)| + \left|u_\infty(x) - \frac{1}{2} D^2u_\infty(0) x\cdot x\right| \\
    & \leq & \frac{1}{2} r_0^{2+\epsilon_0} + C^*|x|^{2+\beta_*}\\
    & \leq & \frac{1}{2} r_0^{2+\epsilon_0} + C^*r_0^{2+\beta_*}\\
    &  <   & r_0^{2+\epsilon_0},
\end{eqnarray*}
for $r_0$ small enough such that
$$
    C^* r_0^{\beta_* - \epsilon_0} < \frac{1}{2}.
$$
However, since $P_{D^2 u_\infty(0)} \in \mathcal{F}^2_{n,\lambda,\Lambda}$, we get a contradiction to \eqref{distance contr assumption}.
\end{proof}

The proof of Theorem \ref{l_infinity} relies on an iterative scheme of the approximation lemma above to reach the aimed $C^{2,\epsilon_1}$-estimate of solutions to \eqref{maineq} at the origin as long as
\begin{equation}\nonumber
    p > \frac{n(m+\gamma+1)}{2m+\gamma}.
\end{equation}
It is interesting to note that if $m=\gamma=0$, then this becomes $p>n$ and no improvement can be assured. On the other hand, if $m>1$ and $\gamma=0$, then $p>n$ is enough so that the inequality is true, and an improvement is assured.

\begin{proposition}\label{iteration c2 alfa small exponent}
Let $u \in C (\overline{B}_1  )$ be as in Theorem \ref{l_infinity}. Then, there exists a universal constant $C$, such that
\begin{equation}\label{decay estimate for c2alfa}
    \sup\limits_{B_t(x_0)} \left|u(x) - \frac{1}{2}M(x-x_0) \cdot (x-x_0)\right| \le C t^{2 + \epsilon_0},
\end{equation}
for any $x_0 \in \mathcal{C}(u)$, $t<1/8$ and
\begin{equation}\label{escolha 1 p/ eps_0}
    \epsilon_0 \coloneqq \min \left\{\frac{(m+\gamma)\left(1-\frac{n}{p}\right) + m-\frac{n}{p}}{2}, \, \tau^-, \, \beta_*^-  \right\}.
\end{equation}
\end{proposition}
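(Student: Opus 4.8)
The strategy mirrors that of Proposition~\ref{bootstrap step 1}: iterate the approximation Lemma~\ref{Approx lemma} at geometrically shrinking scales, only now the approximating objects are quadratic polynomials $P_M$ with $M \in \mathcal{F}^2_{n,\lambda,\Lambda}$ rather than $F$-harmonic functions, and we track how the coefficient matrix evolves along the iteration. Fix $x_0 = 0 \in \mathcal{C}(u)$; the general case follows by translation. The goal is to produce a radius $\rho \in (0,1)$, a sequence of matrices $(M_k)_{k}$, and the decay
$$
    \sup_{B_{\rho^k}} \left| u(x) - P_{M_k}(x) \right| \le \rho^{k(2+\epsilon_0)},
$$
together with the Cauchy-type control $\|M_{k+1} - M_k\| \le C\rho^{k\epsilon_0}$, so that $M_k \to M$ with $\|M_k - M\| \lesssim \rho^{k\epsilon_0}$, which upon combining the two bounds yields \eqref{decay estimate for c2alfa} at dyadic radii and then, by a routine interpolation between scales, for all $t < 1/8$.

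\textbf{Setting up the first step.} One first normalizes: replace $u$ by $u - P_{M_0}$ where $M_0$ is whatever constant the ambient Caffarelli/Teixeira theory provides (here $M_0$ may be taken to be $0$ after subtracting, or we simply start the scheme directly). Because $0 \in \mathcal{C}(u)$ and $u \in C^{1,1-n/p}_{loc}$, we have $\sup_{B_\rho}\{|u|, \rho|Du|\} \le C'\rho^{1 + (1-n/p)}$; this is the input that makes the rescaled source small. Define $v(x) = u(\rho x)/\rho^{2+\epsilon_0}$ — wait, more carefully, at the first step one rescales by $v(x) = u(\rho x)$ and checks that $v$ solves a $(\lambda,\Lambda)$-elliptic equation with source $f_\rho$ whose $L^p$ norm is bounded by $C'^{\,m+\gamma}\rho^{2 + m(1+(1-n/p)) + \gamma(1-n/p) - n/p}\|q\|_{L^p}$, and whose coefficient oscillation has shrunk (this is where the Dini/Hölder hypothesis on $F$ enters via Assumption~\ref{holder continuity of the coeffs}). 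For $\rho$ small this puts $v$ in the smallness regime of Lemma~\ref{Approx lemma}, producing $M_1 \in \mathcal{F}^2_{n,\lambda,\Lambda}$ with $\|v - P_{M_1}\|_{L^\infty(B_{r_0})} \le r_0^{2+\epsilon_0}$; choosing $r_0 = \rho$ and unwinding gives the $k=1$ case. The exponent $\epsilon_0$ in \eqref{escolha 1 p/ eps_0} is exactly what is forced by balancing $2 + m(1 + (1-n/p)) + \gamma(1-n/p) - n/p$ against $2+\epsilon_0$ after absorbing one power, i.e. $\epsilon_0 \le \big[(m+\gamma)(1-n/p) + m - n/p\big]$, and then halving (the "$/2$") is the analogue of the $1/(1+\theta)$ slack in Section~\ref{sct crit pt reg case p} that guarantees strict smallness at every stage; the $\nu^-, \beta_*^-$ terms are the hard ceilings from coefficient regularity and from the $C^{2,\beta_*}$ a priori estimate.

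\textbf{The induction step and the main obstacle.} Assuming the estimate holds up to step $k$ with matrix $M_k$, set $v_k(x) = \big(u(\rho^k x) - P_{M_k}(\rho^k x)\big)/\rho^{k(2+\epsilon_0)}$. The delicate point — and what I expect to be the principal technical obstacle — is that $v_k$ does not solve the naive rescaled equation: subtracting the quadratic $P_{M_k}$ shifts the Hessian argument of $F$, so $v_k$ solves $F_k(x, D^2 v_k + A_k) = \tilde f_k$ for a suitable matrix $A_k$ built from $M_k$ (rescaled), and one must check that this shifted operator is still covered by Lemma~\ref{Approx lemma} — this is precisely why Assumption~\ref{a prior c2+ estimates} is stated with the shift "$F(0, D^2 h + N) = 0$" and why Lemma~\ref{Approx lemma} should be read with that flexibility. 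One must also verify that $M_k$ stays in a compact subset of $\mathcal{F}^2_{n,\lambda,\Lambda}$ (uniform bound on $\|M_k\|$), which follows from summing the increments $\|M_{j+1}-M_j\| \le C\rho^{j\epsilon_0}$, a geometric series. The source term for $v_k$ inherits the factor $\rho^{k(\,\cdots\,)}$ with the same exponent $2 + m(1+(1-n/p)) + \gamma(1-n/p) - n/p - (2+\epsilon_0) > 0$ by the choice of $\epsilon_0$ and the hypothesis $p > n(m+\gamma+1)/(2m+\gamma)$ (which is exactly the condition making that exponent positive when $\epsilon_0$ is at its "interior" value rather than capped by $\nu$ or $\beta_*$), and the coefficient oscillation of $F_k$ restricted to $B_1$ is $\lesssim \rho^{k\nu}$, again small. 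Hence Lemma~\ref{Approx lemma} applies to $v_k$, yielding $M_{k+1}'$ with $\|v_k - P_{M_{k+1}'}\|_{L^\infty(B_\rho)} \le \rho^{2+\epsilon_0}$; setting $M_{k+1} = M_k + \rho^{k\epsilon_0} M_{k+1}'$ (appropriately rescaled back) and rescaling gives the step-$(k+1)$ estimate and the increment bound. Finally, letting $k \to \infty$ defines $M := \lim_k M_k$, and the triangle inequality $|u(x) - P_M(x)| \le |u(x) - P_{M_{k}}(x)| + |P_{M_k - M}(x)| \le \rho^{k(2+\epsilon_0)} + C\rho^{k\epsilon_0}|x|^2$ at $|x| \sim \rho^k$ closes the argument; a standard sandwiching of an arbitrary $t < 1/8$ between consecutive powers of $\rho$ upgrades this to \eqref{decay estimate for c2alfa} with a universal constant $C$.
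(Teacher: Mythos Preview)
Your proposal is correct and follows essentially the same iteration scheme as the paper: build a Cauchy sequence of matrices $M_k$ via Lemma~\ref{Approx lemma} at geometrically shrinking scales, maintain the increment bound $\|M_{k+1}-M_k\|\le C\rho^{k\epsilon_0}$, pass to the limit, and interpolate between dyadic radii. One piece of bookkeeping you elide is that renormalizing the shifted operator so that $F_k(\cdot,0)=0$ pushes an additive error $r^{2-k\epsilon_0}F(r^{k+1}x,M_k)$ into the source, and controlling it requires both the inductively maintained condition $F(0,M_k)=0$ and Assumption~\ref{holder continuity of the coeffs}; this, together with the correct rescaled oscillation bound $\mathrm{osc}_{F_k}(x,0)\lesssim \rho^{k(\nu-\epsilon_0)}$ (not $\rho^{k\nu}$), is precisely what forces the ceiling $\nu^-$ in~\eqref{escolha 1 p/ eps_0}.
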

\begin{proof}
First, by a translation argument, we may assume $x_0 = 0 \in \mathcal{C}(u)$. We note that by Assumption \eqref{inte C^{2,alp} estimates}, equation $F(0,D^2 h + N) = c$ also has $C^{2,\beta_*}$ interior estimates with constant $\Tilde{\Theta}$, depending on $\Theta$ and $|c|$, for any matrix $N$ satisfying $F(0, N) = c$.

Our strategy now is to show, for a radius $r> 0$ to be chosen,  the existence of a sequence of matrices $M_k \in \mbox{Sym}(n)$ such that 
\begin{equation}\label{iteration of lemma 2}
     \left \{
        \begin{array}{rll}
            \sup\limits_{B_{r^{k+1}}} \left| u(x) -  \frac{1}{2}M_k x\cdot x \right| &\leq& r^{k(2 + \epsilon_0)}, \\ 
            \| M_{k}-M_{k-1} \| &\le& C r^{k \epsilon_0}, \quad  \text{and} \\ 
            F(0, M_k) &=& 0.
    \end{array}
    \right.
\end{equation}
for a universal constant $C > 0$.

For $k = 0$, we proceed by choosing $M_0 = M_{-1} = 0$, and \eqref{iteration of lemma 2} is true by the fact that $u$ is normalized and $F(0,0) = 0$. Now we assume that \eqref{iteration of lemma 2} is verified for $1, \dots, k$. Let $v_k \colon B_{1} \to \mathbb{R}$ be defined as
$$
    v_k(x) : = \frac{u(r^{k+1} x) - P_{M_k}\left(r^{k+1}x \right)}{r^{k(2 + \epsilon_0)}} .
$$
As before, one can easily check that $v_k$ is a normalized solution to 
$$
    F_k(z,D^2 v_k) = f_k(z,v_k,Dv_k),
$$
where
$$
    F_k(z,N) = r^{2-k\epsilon_0}F(r^{k+1} z, r^{-(2 - k \epsilon_0)}N + M_k) - r^{2-k\epsilon_0}F(r^{k+1}z, M_k), 
$$
and $f_k(z,s,\xi)$ is defined to be equal to
\begin{align*}
    r^{2-k\epsilon_0}f\left(r^{k+1}z, r^{k(2+\epsilon_0)}s + 
    P_{M_k}\left(r^{k+1}x \right)
    ,r^{k(1+\epsilon_0)-1}\xi + DP_{M_k}\left(r^{k+1}x \right)\right)\\
                  -  r^{2-k\epsilon_0}F\left(r^{k+1} z,M_k\right)
\end{align*}
Observe that $F_k$ is $(\lambda,\Lambda)$-elliptic, $F_k(z, 0) = 0$ and $F_k(0, D^2 w) = 0$ has $C^{2,\beta_*}$ interior estimates, since this equation is equivalent to 
$$
    F(0, D^2  (r^{-2 + k\epsilon_0} w ) + M_k) = 0, 
$$
and $F(0, M_k) = 0$. Moreover, by \textit{a priori} $C^{2,\beta_*}$ estimates, it also holds 
\begin{eqnarray*}
    \mbox{osc}_{F_k}(x,0) & = &\displaystyle  \sup\limits_{M \in \mbox{Sym}(n)} \left| \frac{F_k(x, M) - F_k(0,M)}{1 + \|M\|} \right|\\
    & \leq &  C_0 r^{2 -k\epsilon_0}\mbox{osc}_F\left(r^{k+1} x,0\right),
\end{eqnarray*}
and so, by Assumption \ref{holder continuity of the coeffs}, it holds
\begin{equation}\label{iterat osc smallness}
    \mbox{osc}_{F_k}(x,0) \leq C_1r^{k(\tau - \epsilon_0)}. 
\end{equation}    
By optimal regularity estimates for $u$ and the fact that $0 \in \mathcal{C}(u)$, there holds
$$
    \sup\limits_{B_{t}} \left\{|u|, t|Du| \right\} \le C't^{2-\frac{n}{p}},
$$
where $C'>0$ is an universal constant. To simplify notation, let us define
$$
    A_k(x) \coloneqq r^{2-k\epsilon_0}F\left(r^{k+1}x,M_k\right).
$$
Therefore, we estimate
\begin{eqnarray*}
    |f_k(x,v_k,Dv_k)| & = & r^{2-k\epsilon_0}\left|f\left(r^{k+1}x,u\left(r^{k+1}x\right),Du\left(r^{k+1}x\right)\right) + A_k(x)\right|\\
    & \leq & r^{2-k\epsilon_0}\, q\left(r^{k+1}x\right)\,\left|u\left(r^{k+1}x\right)\right|^m   \left|Du\left(r^{k+1} x\right)\right|^{\gamma} \\
    & &+ |A_k(x)|  \\
    & \leq & C_2 \left( q\left(r^{k+1}x\right)r^{k\left(m\left(2-\frac{n}{p}\right) + \gamma\left(1-\frac{n}{p}\right)-\epsilon_0\right)} + r^{k(\nu - \epsilon_0)} \right), 
\end{eqnarray*}
which implies
\begin{equation}\label{inter RHS smallnes}
    \left\|f_k\right\|_{L^p(B_1)} \leq C_3\left(\|q\|_{L^p(B_1)}r^{k\left(m\left(2-\frac{n}{p}\right) + \gamma\left(1-\frac{n}{p}\right)-\epsilon_0 - \frac{n}{p}\right)} +  r^{k(\nu - \epsilon_0)} \right)
\end{equation}
where we have further used Assumption \ref{holder continuity of the coeffs}, \eqref{RHS_growth}, $F(0, M_k)=0$, and we are abusing notation where $f_k = f_k(x,v_k,Dv_k)$. Next, we choose $\epsilon_0$ as in \eqref{escolha 1 p/ eps_0}. Recall that $\epsilon_0 > 0$ due to \eqref{restriction for p}. This choice is so that
\begin{eqnarray*}
    \max \left\{\sup_{x \in B_1} \mbox{osc}_{F_k}(x,0)\,, \left\|f_k(x,v_k(x),Dv_k(x))\right\|_{L^p(B_1)} \right\}\\
    \leq C_4\left(\|q\|_{L^p(B_1)}r^{\frac{k  \left((m+\gamma)\left(1-\frac{n}{p}\right) + m-\frac{n}{p}\right)}{2}} +  r^{k(\tau - \tau^-)} \right),
\end{eqnarray*}
and so we can choose $r$ small enough so that \eqref{iterat osc smallness} and \eqref{inter RHS smallnes} satisfies a smallness regime. As a consequence, $v_k$ is entitled to Lemma \ref{Approx lemma} and we can find $\tilde{M}_k \in \mathcal{F}^2_{n,\lambda,\Lambda}$ such that 
$$
    \left\| v_k - P_{\tilde{M}_k} \right\|_{L^{\infty}\left(B_r\right)} \leq r^{2+\epsilon_0} .
$$
Scaling back to $u$, we have 
\begin{eqnarray}
    r^{2 + \epsilon_0} &\ge & \left\| v_k - P_{\tilde{M}_k} \right\|_{L^{\infty}\left(B_r\right)} \nonumber \\
    &=& \sup\limits_{B_{r}} \left| \frac{u(r^{k+1} x)  - P_{M_k}\left(r^{k+1}x \right) - r^{-2 +k\epsilon_0}P_{\tilde{M}_k}\left(r^{k+1}x \right)}{r^{k(2 + \epsilon_0)}} \right| \nonumber \\
    &=& \sup\limits_{B_{r}} \left| \frac{u(r^{k+1} x)  - P_{M_{k+1}}\left(r^{k+1}x \right)}{r^{k(2 + \epsilon_0)}} \right| \nonumber,
\end{eqnarray}
where $M_{k+1} = M_k + r^{-2}r^{k\epsilon_0} \tilde{M}_k$. Finally, this implies
$$
    \sup\limits_{B_{ r^{k+2}}} \left| u(x) - P_{M_{k+1}}\left(x \right) \right| \le r^{(k+1)(2 + \epsilon_0)} 
$$
and 
$$
    \| M_{k+1}-M_{k} \| = \| r^{-2}r^{k\epsilon_0} \tilde{M}_k \|  \le r^{-2}C r^{k \epsilon_0} = C r^{k \epsilon_0}, 
$$
since $\tilde{M}_k$ is universally bounded by the $C^{2,\beta_*}$ \textit{a priori} estimates. This concludes the induction step. By \eqref{iteration of lemma 2}, we obtain that
$$
    M_k \to M \quad \mbox{in Sym(n)},
$$
for some symmetric matrix $M$. Moreover,
$$
    |M_k - M| \leq \frac{C}{1-r^{\epsilon_0}}r^{k\epsilon_0}.
$$
Next, given $t <r^2$, there exists $k \in \mathbb{N}$ such that $r^{k+2} \leq t \leq r^{k+1}$. Therefore, if $x \in B_t$, then
\begin{eqnarray*}
       \displaystyle \left| u(x) - P_{M}\left(x \right) \right|   & \leq  & \displaystyle \left| u(x) - P_{M_k}\left(x \right) \right| +  \left|P_{M_k - M}\left(x \right) \right| \\[0.4cm]
       & \leq & \displaystyle r^{k(2+\epsilon_0)} + \frac{C}{1-r^{\epsilon_0}}r^{k\epsilon_0}t^2 \\[0.4cm]
       & \leq & \left(  r^{-2(2+\epsilon_0)} + \frac{C}{1-r^{\epsilon_0}}r^{-2\epsilon_0} \right)\, t^{2+\epsilon_0}.
\end{eqnarray*}
\end{proof}

\begin{proof}[Proof of Theorem \ref{l_infinity}]
The proof follows the same lines as in the proof of Proposition \ref{iteration c2 alfa small exponent}, except that now, estimate \eqref{decay estimate for c2alfa} implies, by Lemma \ref{growth est sec-prd version},
\begin{equation}\label{new decay for gradient at critical points}
    \sup_{x \in B_\rho} \left\{|u(x)|, \rho |Du(x)| \right\} \leq \overline{C} \rho^2,
\end{equation}
for $\rho \in (0,1/2)$. And so, by carefully following the lines of the proof, one notices that we can improve the choice of \eqref{escolha 1 p/ eps_0} to the new exponent
\begin{equation}\label{new C2alfa exp}
    \epsilon_1 \coloneqq \min \left\{2m+\gamma-\frac{n}{p}, \tau, \beta_*  \right\}^-.
\end{equation}
The rest of the proof follows seamlessly.
\end{proof}

\subsection{Gain of regularity at inflection points}

Theorem \ref{l_infinity} also can be understood as a $C^{1,\alpha}$ implies $C^{2,\alpha}$ at points in the set $\mathcal{C}(u)$. In contrast to Section \ref{sct crit pt reg case p}, this result cannot be iterated indefinitely. The reason for such is because it is not always true that
\begin{equation}\label{crit pts are inflection pts}
     \mathcal{C}(u) \subset \{D^2u = 0\}.
\end{equation}
Observe that this can be understood in a pointwise sense due to Theorem \ref{l_infinity}, which is nontrivial information since solutions of \eqref{maineq} are, at best, $C^{1,1-\frac{n}{p}}$.

It is worthwhile to mention that the asymptotic analysis can be actually carried away as long \eqref{crit pts are inflection pts} is true, as follows
\begin{proposition}\label{recursive exponent c2 alfa}
Let $u \in C (\overline{B}_1  )$ be as in Theorem \ref{l_infinity} and assume \eqref{crit pts are inflection pts} is in force. If
$$
    \sup_{B_t(x_0)}|Du| \leq C_kt^{1+\epsilon_k},
$$
then
$$
    \sup_{B_t(x_0)}|Du| \leq C_{k+1}t^{1+\epsilon_{k+1}},
$$
where
$$
    \epsilon_{k+1} \coloneqq \min \left\{\frac{(m+\gamma)(1+\epsilon_k) + m-\frac{n}{p}}{1+\theta},\beta_*^- \right\},
$$
for any $x_0 \in \mathcal{C}(u)$ and a fixed parameter $\theta>0$.
\end{proposition}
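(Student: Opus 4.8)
The plan is to translate the point $x_0$ to the origin and to iterate there the dyadic approximation scheme of Proposition~\ref{iteration c2 alfa small exponent}, fed now with the stronger input carried by the hypothesis, and exploiting that, under \eqref{crit pts are inflection pts}, the quadratic polynomial generated along the iteration is ultimately forced to be zero. First I would record what the hypothesis buys: since $0\in\mathcal{C}(u)$ we have $u(0)=|Du(0)|=0$, so integrating $\sup_{B_t}|Du|\le C_kt^{1+\epsilon_k}$ along radial segments gives
\[
  \sup_{B_\rho}\bigl\{\,|u|,\ \rho\,|Du|\,\bigr\}\ \le\ C'\,\rho^{\,2+\epsilon_k}\qquad\text{for all }\rho\in(0,1/2),
\]
with $C'$ depending on $C_k,\epsilon_k$, while Theorem~\ref{l_infinity} together with \eqref{crit pts are inflection pts} gives that $u$ is pointwise twice differentiable at $0$ with $D^2u(0)=0$, so the (necessarily vanishing) second-order jet of $u$ at the origin is already identified.

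Next, mirroring verbatim the induction of Proposition~\ref{iteration c2 alfa small exponent} (with $M_{-1}=M_0=0$, matrices $M_j\in\mbox{Sym}(n)$ satisfying $F(0,M_j)=0$ and $\|M_j-M_{j-1}\|\le C r^{j\epsilon_{k+1}}$, and $\sup_{B_{r^{j+1}}}|u-P_{M_j}|\le r^{j(2+\epsilon_{k+1})}$), the inductive step analyses $v_j(x):=\bigl(u(r^{j+1}x)-P_{M_j}(r^{j+1}x)\bigr)r^{-j(2+\epsilon_{k+1})}$, a normalized solution of a rescaled equation $F_j(x,D^2v_j)=f_j(x,v_j,Dv_j)$ with $F_j$ being $(\lambda,\Lambda)$-elliptic, $F_j(x,0)=0$, and $F_j(0,D^2\,\cdot\,)=0$ still enjoying the a priori $C^{2,\beta_*}$ estimates of Assumption~\ref{a prior c2+ estimates} (since it is equivalent to $F(0,D^2(\,\cdot\,)+M_j)=0$ with $F(0,M_j)=0$). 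The crucial point is that, using the preliminary estimate above in place of the mere Caffarelli bound that appears in Proposition~\ref{iteration c2 alfa small exponent}, Assumption~\ref{convexity} forces $\|f_j(\cdot,v_j,Dv_j)\|_{L^p(B_1)}$ to decay like a fixed positive power of $r$ times $r^{j(B-\epsilon_{k+1})}$, with $B:=(m+\gamma)(1+\epsilon_k)+m-\tfrac np$ — the algebraic identity $m(2+\epsilon_k)+\gamma(1+\epsilon_k)-\tfrac np=B$ being exactly what matches the exponent in the statement — while Assumption~\ref{holder continuity of the coeffs} makes $\sup_{x\in B_1}\mbox{osc}_{F_j}(x,0)$ decay geometrically in $j$ as well. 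The restriction \eqref{restriction for p} (with $\epsilon_k\ge0$) forces $B>0$, and the choice $\epsilon_{k+1}=\min\{B/(1+\theta),\beta_*^-\}$ yields $B-\epsilon_{k+1}\ge B\theta/(1+\theta)>0$; hence for $r$ small enough $v_j$ lies in the smallness regime of Lemma~\ref{Approx lemma} (applied with $r_0=r$ and $\epsilon_0=\epsilon_{k+1}<\beta_*$), which produces $\widetilde M_j\in\mathcal{F}^2_{n,\lambda,\Lambda}$ with $\|v_j-P_{\widetilde M_j}\|_{L^\infty(B_r)}\le r^{2+\epsilon_{k+1}}$, and setting $M_{j+1}:=M_j+r^{-2}r^{j\epsilon_{k+1}}\widetilde M_j$ closes the induction, exactly as in Proposition~\ref{iteration c2 alfa small exponent}.

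Finally, summing the geometric bound on $\|M_{j+1}-M_j\|$ yields $M_j\to M$ in $\mbox{Sym}(n)$ and, by the dyadic-to-continuous passage at the end of Proposition~\ref{iteration c2 alfa small exponent}, $|u(x)-\tfrac12 Mx\cdot x|\le C|x|^{2+\epsilon_{k+1}}$ on $B_{1/8}$; but \eqref{crit pts are inflection pts} identifies $M$ with the second-order jet of $u$ at $0$, which vanishes, so in fact $|u(x)|\le C|x|^{2+\epsilon_{k+1}}$, and Lemma~\ref{growth est sec-prd version} upgrades this growth bound into $\sup_{B_t(x_0)}|Du|\le C_{k+1}t^{1+\epsilon_{k+1}}$, which is the assertion. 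I expect the main obstacle to be the uniform-in-$j$ verification that the rescaled data stays admissible for Lemma~\ref{Approx lemma}: one must check that the rescaled operators $F_j$ keep the a priori $C^{2,\beta_*}$ estimates and, above all, that their coefficient oscillation does \emph{not} deteriorate under the rescaling by $r^{-(2-j\epsilon_{k+1})}$ of the Hessian variable — this is precisely where Assumption~\ref{holder continuity of the coeffs} (or, more sharply, the Dini modulus used in Theorem~\ref{Lp p large}) is indispensable — while simultaneously keeping track that the competing powers of $r$ coming from $q$, from $\mbox{osc}_F$, and from the a priori estimates all beat $r^{2+\epsilon_{k+1}}$, the slack being supplied by the parameter $\theta>0$.
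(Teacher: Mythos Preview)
Your approach differs from the paper's in one decisive respect, and that difference creates a genuine gap. You mirror Proposition~\ref{iteration c2 alfa small exponent} verbatim, carrying along a sequence of \emph{nonzero} matrices $M_j$, and only at the very end invoke \eqref{crit pts are inflection pts} to identify $M=\lim M_j$ with $D^2u(0)=0$. The paper, by contrast, first upgrades the approximation lemma---using the Dini/$C^2$ theory of \cite{JK}---so that the limiting profile itself has vanishing Hessian at the origin, and then iterates with $M_j\equiv 0$ throughout. This is not a cosmetic choice: it is precisely what allows the exponent $\epsilon_{k+1}$ to be free of the H\"older exponent $\nu$ of the coefficients, which is the whole content of the Remark following the proposition.

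Concretely, in your scheme the rescaled source $f_j$ picks up the extra term $A_j(x)=r^{2-j\epsilon_{k+1}}F(r^{j+1}x,M_j)$, and the rescaled oscillation $\sup_{x}\mbox{osc}_{F_j}(x,0)$ must be driven small to feed Lemma~\ref{Approx lemma}. Both quantities, via Assumption~\ref{holder continuity of the coeffs}, are controlled only by $Cr^{j(\nu-\epsilon_{k+1})}$ (exactly as in \eqref{iterat osc smallness}--\eqref{inter RHS smallnes}), so your claim that they ``decay geometrically in $j$'' is valid only when $\epsilon_{k+1}<\nu$. Your argument therefore establishes at best $\epsilon_{k+1}=\min\{B/(1+\theta),\nu^-,\beta_*^-\}$, not the stated $\min\{B/(1+\theta),\beta_*^-\}$. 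To recover the proposition as written you must build the inflection-point information into the approximation step itself, so that no quadratic correction---and hence no $A_j$ term and no amplified coefficient oscillation---ever appears in the iteration.
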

\begin{proof}
The proof is followed by an induction argument, which we only sketch. First, in order for the argument to hold, we need an approximation lemma, as \ref{Approx lemma} that is stable under hessian degenerate points. By taking into account the Dini continuity of the coefficients of the diffusion operator, the results from \cite{JK}, assure us of such stability.

As did before, we assume $x_0 = 0 \in \mathcal{C}(u)$. As in the proof of Theorem \ref{l_infinity}, we define
$$
    v_j(x) \coloneqq \frac{u(r^{j+1}x)}{r^{j(2+\epsilon_{k+1})}},
$$
which solves
$$
    F_j(z,D^2 v_j) = f_j\left(z,v_j,Dv_j\right),
$$
where
$$
    F_j(z,N) = r^{2-j\epsilon_{k+1}}F(r^{j+1} z, r^{-(2 - j \epsilon_{k+1})}N), 
$$
and
\begin{eqnarray*}
    f_j(z,s,\xi) & = & r^{2-j\epsilon_{k+1}} f\left(r^{j+1}z, r^{j(2+\epsilon_{k+1})}s ,r^{j(1+\epsilon_{k+1})-1}\xi \right).
\end{eqnarray*}
Observe that by Assumption \ref{convexity}, it holds that
\begin{equation}\nonumber
    \left\|f_j(x,v_j(x),Dv_j(x))\right\|_{L^p(B_1)} \leq C\left(\|q\|_{L^p(B_1)}r^{j(m(2+\epsilon_k) + \gamma(1+\epsilon_k)-\epsilon_{k+1} - \frac{n}{p})} \right).
\end{equation}
The choice of $\epsilon_{k+1}$ is so that we can assure the smallness regime required to apply the approximation lemma (Lemma \ref{Approx lemma}) and the rest of the proof follows similarly as in the proof of Theorem \ref{l_infinity}.
\end{proof}
\begin{remark}
It is worth noting that, while in Theorem \ref{l_infinity}, the level of regularity in the coefficients imposes constraints on the regularity of solutions, in the case investigated in Proposition \ref{recursive exponent c2 alfa}, where  
$$
    \mathcal{C}(u) \subset \{D^2 u = 0 \},
$$
we manage to bypass such a barrier. 
\end{remark}
Finally, we perform an asymptotic analysis of the recursive exponent $\epsilon_k$.
\begin{proposition}\label{asymptotics c2 alfa case}
Let $\{\epsilon_k\}_{k \in \mathbb{N}}$ be the nondecreasing recursive sequence defined as in Proposition \ref{recursive exponent c2 alfa}. Then,
$$
    \epsilon_{m,\gamma,n,p,\theta} \coloneqq \lim_{k \to \infty} \epsilon_k
$$
exists, and
$$
    \epsilon_{m,\gamma,n,p,\theta} = \min \left\{\frac{2m + \gamma - \frac{n}{p}}{(1+\theta - (m+\gamma))_+} ,\beta_*^- \right\}
$$
\end{proposition}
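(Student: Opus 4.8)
The plan is to repeat, almost verbatim, the asymptotic analysis already performed in Proposition \ref{asymptotic analysis case 1,alfa}, since the recursion here is again of the form ``affine map, then truncate at a ceiling.'' First I would record the two structural facts coming from the construction in Proposition \ref{recursive exponent c2 alfa}: the sequence $\{\epsilon_k\}$ is nondecreasing, and it is bounded above by $\beta_*^-$. A bounded monotone sequence converges, so $\lim_{k\to\infty} \epsilon_k$ exists; this disposes of the existence claim with no further argument.

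To compute the limit I would isolate the last index at which the nontrivial branch of the minimum is active, namely
$$
  k_0 \coloneqq \sup\left\{ i \in \mathbb{N} \suchthat \epsilon_{i+1} = \frac{(m+\gamma)(1+\epsilon_i) + m - \frac{n}{p}}{1+\theta} \right\}.
$$
By monotonicity, for every $k \le k_0$ the recursion reduces to the affine one $\epsilon_{k+1} = a\,\epsilon_k + b$ with $a \coloneqq \frac{m+\gamma}{1+\theta}$ and $b \coloneqq \frac{2m+\gamma-\frac{n}{p}}{1+\theta}$, started from the base exponent $\epsilon_0$. A one-line induction, identical to the one in Proposition \ref{asymptotic analysis case 1,alfa}, then yields the closed form
$$
  \epsilon_{k_0+1} = b\sum_{l=0}^{k_0-1} a^l + \epsilon_0\, a^{k_0}.
$$

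It remains to split into two cases. If $k_0 = \infty$, then the finiteness of every $\epsilon_k$ forces $a < 1$, i.e. $m+\gamma < 1 + \theta$; summing the geometric series gives
$$
  \lim_{k\to\infty}\epsilon_k = \frac{b}{1-a} = \frac{2m+\gamma-\frac{n}{p}}{1+\theta-(m+\gamma)},
$$
which is the first entry of the claimed minimum, consistent with $(1+\theta-(m+\gamma))_+$ being positive precisely in this regime. If $k_0 < \infty$, then by the very definition of $k_0$ one has $\epsilon_k = \beta_*^-$ for all $k \ge k_0+2$, so the limit equals $\beta_*^-$, the second entry. Combining the two cases yields the stated value of $\epsilon_{m,\gamma,n,p,\theta}$. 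I do not anticipate a genuine obstacle: the only point requiring care is the bookkeeping of the base exponent $\epsilon_0$ and the verification that the geometric partial sums cannot overshoot $\beta_*^-$ before the truncation activates — but this is exactly the sign and monotonicity check already carried out in the $C^{1,\alpha}$ counterpart, so it transfers without change.
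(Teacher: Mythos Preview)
Your proposal is correct and follows essentially the same approach as the paper: define $k_0$, unroll the affine recursion to a closed form involving a geometric partial sum, and then split into the cases $k_0=\infty$ (sum the series) and $k_0<\infty$ (hit the ceiling $\beta_*^-$). The only cosmetic slip is an off-by-one in your displayed closed form (the sum should run to $k_0$ and the power on $\epsilon_0$ should be $k_0+1$), but this does not affect the limit.
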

\begin{proof}
As we did in the proof of Proposition \ref{asymptotic analysis case 1,alfa}, we define
$$
    k_0 \coloneqq \sup\left\{i \in \mathbb{N}\, \Big{\vert}\, \epsilon_{i+1} = \frac{(m+\gamma)(1+\epsilon_{i}) + \left(m - \frac{n}{p}\right)}{1+\theta}\right\}.
$$
By definition of $k_0$, it holds 
$$
    \epsilon_{k+1} = \frac{(m+\gamma)(1+\epsilon_{k}) + \left(m -  \frac{n}{p}\right)}{1+\theta}
$$
for every $k \leq k_0$. Letting $\eta \coloneqq 2m + \gamma - \frac{n}{p}$, we can rewrite it as follows
\begin{eqnarray*}
    (1+\theta) \epsilon_{k+1} & = & \eta + \left(\frac{m+\gamma}{1+\theta} \right) ((1+\theta)\epsilon_k)\\
                        & = & \eta + \left(\frac{m+\gamma}{1+\theta} \right) \left(\eta + \left(\frac{m+\gamma}{1+\theta} \right) ((1+\theta)\epsilon_{k-1}) \right)\\
                        & = & \eta + \eta \left(\frac{m+\gamma}{1+\theta} \right) + \left(\frac{m+\gamma}{1+\theta} \right)^2 ((1+\theta)\epsilon_{k-1})\\
                        & = & \eta \left(\sum_{j=0}^{i} \left(\frac{m+\gamma}{1+\theta} \right)^j \right) + \left(\frac{m+\gamma}{1+\theta} \right)^{i+1}((1+\theta) \epsilon_{k-i}),
\end{eqnarray*}
for $i \in \{2,\cdots, k \}$. Therefore, it follows that
$$
    \epsilon_{k+1} = \frac{\eta}{1+\theta} \left(\sum_{j=0}^{k} \left(\frac{m+\gamma}{1+\theta} \right)^j \right) + \left(\frac{m+\gamma}{1+\theta} \right)^{k+1} \epsilon_{0}.
$$
Now, if $k_0 = \infty$, then we claim that $m+\gamma < 1+\theta$. Indeed, if $m+\gamma \geq 1+\theta$, then
$\epsilon_k \to \infty$, which is a contradiction, since $\epsilon_k \leq \beta_*^-$ for every $k \in \mathbb{N}$. Now, if
$$
    \frac{m+\gamma}{1+\theta} < 1,
$$
then, $\epsilon_k$ converges to a geometric series whose sum is given by
$$
    \lim_{k \to \infty} \epsilon_k  = \frac{2m + \gamma - \frac{n}{p}}{1 + \theta - (m+\gamma)}.
$$
If $k_0 < \infty$, then, by definition of $k_0$, it holds $\epsilon_{k_0 + 2} = \beta_*^-$, and the proposition is proved.
\end{proof}

Finally, we gather all results in order to give the proof of Theorem \ref{Lp p large}.
\begin{proof}[Proof of Theorem \ref{Lp p large}]
We assume $x_0 = 0 \in \mathcal{C}(u) \cap \mathcal{C}_2(u)$. Given $\theta \in (0,1)$, we apply Proposition \ref{recursive exponent c2 alfa} in order to obtain
$$
    \sup_{B_t(x_0)}|Du| \leq C_{k,\theta}t^{1+\epsilon_{k,\theta}}.
$$
By Proposition \ref{asymptotics c2 alfa case},
$$
  \lim_{k \to \infty} \lim_{\theta \to 0} \epsilon_{k,\theta} = \min \left\{ \frac{2m+\gamma - \frac{n}{p}}{1 - (m+\gamma)}, \beta_*^- \right\},  
$$
and so, for $k$ large and $\theta$ small, depending on $m$, $\gamma$, $n$ and $p$, it follows that
$$
    \epsilon_{k,\theta} \geq \min \left\{ \frac{2m+\gamma - \frac{n}{p}}{1 - (m+\gamma)}, \beta_* \right\}^-,
$$
from which follows the desired.
\end{proof}

\section{Regularity below the dimension threshold}\label{Sct Reg below n}

In this section, we give the proof of Theorems \ref{reg below dimension} and \ref{reg below dimension interplay}. The starting point of the proof is the sharp $C^{0,\frac{n-2\nu}{n-\nu}}$ regularity estimates obtained in \cite{Teix1}. The key novelty in this section is that we modify Assumption \ref{convexity} as to only require that  RHS has a priori bounds in the $L^{n-\nu}$ space, where $\nu \in (0,\varepsilon_E)$ and $\varepsilon_E$ stands for the Escauriaza exponent, see \cite{ESC}. Without further assumptions on how close $n-\nu$ is from the dimension, $n$, merely improved H\"older estimates are available. However, when there is an interplay between the decay in the zeroth term and the amount of integrability on the RHS, we surpass the previous H\"older regularity regime. We remark that the forthcoming proofs follow the same strategy as before with minor amendments. We bring it here for the reader's convenience.

In the following, $\nu$ will always denote an exponent in the range $(0,\varepsilon_E)$, where $\varepsilon_E$ stands for the Escauriaza exponent.
\subsection{Improved H\"older estimates}
We begin with a simple flatness lemma, which states our problem, up to scaling, is uniformly close to functions in $\mathcal{F}_{n,\lambda,\Lambda}$. Recall from Assumption \ref{convexity}, the RHS satisfies
$$
    |f(x,s,\xi)| \leq q(x)|s|^m\min\{1,|\xi|^\gamma \} \leq q(x)|s|^m.
$$
Since we are dealing with normalized functions, we may assume that
\begin{equation}\label{decay RHS below dim}
    |f(x,s,\xi)| \leq q(x).
\end{equation}
We will be using such a feature in the following result.
\begin{lemma}\label{flat lemma holder}
Let $u \in C(\overline{B}_1)$ be a normalized viscosity solution of
$$
    \tilde{F}_\mu(x,D^2u) = f(x,u,Du) \quad \mbox{in} \quad B_1.
$$
Assume $0 \in \mathcal{C}_0(u)$. Given $\delta>0$, there exists $\eta = \eta(\delta,n,\lambda,\Lambda)$ such that if 
$$
    \|q\|_{L^{n-\nu}(B_1)} \leq \eta \quad \mbox{and} \quad \mu < \eta,
$$
where $q$ is from \eqref{decay RHS below dim}, then there exists $h \in \mathcal{F}_{n,\lambda,\Lambda}$ such that $0 \in \mathcal{C}_0(h)$ and
$$
    \|u-h\|_{L^\infty(B_{1/2})} < \delta.
$$
\end{lemma}
\begin{proof}
Assume, seeking a contradiction, that for some $\delta_0 > 0$, there exists a sequence $(u_k, q_k, \mu_k)_{k \in \mathbb{N}} \subset C (\overline{B}_1 ) \times L^p(B_1) \times \mathbb{R}^+$ satisfying
\begin{enumerate}[label=(\roman*)]

    \item $u_k$ is normalized;

    \item $0 \in \mathcal{C}_0(u_k)$;

    \item $\displaystyle \max \left\{ \|q_k\|_{L^{n-\nu}(B_1)}, \mu_k \right\} \leq \frac{1}{k} $;

    \item $\tilde{F}_{\mu_k} (x, D^2 u_k ) = f_k(x, u_k, D u_k)$ in $B_1$;
\end{enumerate}
however, 
\begin{equation}\label{Appr Lem 
contrad eq case L^p}
    \mbox{dist}\left[u_k, \mathcal{F}_{n,\lambda, \Lambda}\right] \geq \delta_0, 
\end{equation} 
for all $k\ge 1$. By our assumptions on $f$ and the diffusion operator, we have $\{ u_k \}_{k \in \mathbb{N} } \in C_{loc}^{0,\frac{n-2\nu}{n-\nu}} (B_1)$, with universal estimates. Therefore, passing to a subsequence if necessary, we obtain $u_k \to u_\infty$
locally uniform in $L^\infty(B_1)$; in particular we deduce that $0 \in \mathcal{C}_0(u_\infty)$. Moreover, through a further subsequence in necessary, we obtain $\tilde{F}_{\mu_k} \to F'$ locally uniformly on $B_1 \times \mbox{Sym}(n)$ and  $f_k \to 0$. Thus, by stability results in the theory of viscosity solutions, we have
$$
    F'(D^2 u_{\infty}) = 0 \quad \text{in} \quad B_{3/4},
$$
for some $(\lambda,\Lambda)-$elliptic operator $F'$, which contradicts \eqref{Appr Lem contrad eq case L^p} for $k$ sufficiently large. 
\end{proof}
As mentioned before, the starting point is that if we assume $0 \in \mathcal{C}_0(u)$ and $u$ is a normalized viscosity solution of \eqref{maineq}, then by the regularity estimates from \cite{Teix1}, we have
$$
    \sup_{B_t}|u| \leq Ct^{\frac{n-2\nu}{n-\nu}}.
$$
For a positive $\theta$, we define
\begin{equation}\label{first exponent below dim}
    \varepsilon_1 \coloneqq \min \left\{\frac{\frac{n-2\nu}{n-\nu} + m\frac{n-2\nu}{n-\nu}}{1+\theta},1^- \right\}.
\end{equation}
\begin{proposition}
Let $u \in C (\overline{B}_1)$ be as in Theorem \ref{reg below dimension}. If
$$
    \sup_{B_t(x_0)} |u| \leq C_0t^{\frac{n-2\nu}{n-\nu}},
$$
then
$$
    \sup_{B_t(x_0)} |u| \leq C_1t^{\epsilon_1},
$$
for any $x_0 \in \mathcal{C}_0(u)$, where $\epsilon_1$ is as defined in \eqref{first exponent below dim}.
\end{proposition}
\begin{proof}
Assume $x_0 = 0$. We will prove that there is a radius $r$, to be chosen in the sequel, such that
\begin{equation}\label{decay induction argument below dim}
    \sup\limits_{B_{r^{k}}} \left|u(x)\right| \leq r^{k\epsilon_1}
\end{equation}
for a universal constant $C > 0$ for every $k \in \mathbb{N}$. We proceed by induction. The case $k=0$ follows since $u$ is normalized. We assume \eqref{decay induction argument below dim} is verified for $1,\cdots,k$. Let $v_k\colon B_1 \to \mathbb{R}$ be defined as
$$
    v_k(x) \coloneqq \frac{u(r^kx)}{r^{k\epsilon_1}}.
$$
This function solves
$$
    F_k(z,D^2v_k) = f_k(z,Dv_k),
$$
where
\begin{eqnarray*}
    F_k(z,N) &=& r^{k(2-\epsilon_0)}F(r^{k} z, r^{-k(2 -  \epsilon_0)}N)\\
    f_k(z,\xi)& = &r^{k(2-\epsilon_0)}f\left(r^{k}z, u(r^kx)
    ,r^{k\epsilon_1}\xi \right).
\end{eqnarray*}
Due to Assumption \ref{convexity}, and since $0 \in \mathcal{C}_0(u)$, we have
$$
    |f_{k}(z,\xi)| \leq r^{k(2-\epsilon_1)}q(r^kx)\left|u(r^kx)\right|^m \leq r^{k\left(2-\epsilon_1 + m\,\frac{n-2\nu}{n-\nu}\right)}q(r^k x) = q_k(x).
$$
Observe that
\begin{equation}
    \|q_k\|_{L^{n-\nu}(B_1)} \leq r^{k\left(2-\epsilon_1 + m\,\frac{n-2\nu}{n-\nu} - \frac{n}{n-\nu}\right)}\|q\|_p,
\end{equation}
and so we can pick $r$ small enough so that it lies in the smallness regime of Lemma \ref{flat lemma holder}. Since $F_k$ is $(\lambda,\Lambda)$-elliptic, $v_k$ is normalized and the RHS is small in $L^{n-\nu}$ norm, we can apply Lemma \ref{flat lemma holder} to obtain $h \in \mathcal{F}_{n,\lambda,\Lambda}$ such that
$$
    \|v_k-h\|_{L^\infty(B_{1/2})} < \delta.
$$
Therefore, as $h$ enjoy $C^{1,\alpha_*}$ estimates and $0 \in \mathcal{C}_0(h)$, we have
\begin{eqnarray*}
    |v_k| &\leq& |v_k-h| + |h|\\
                       & \leq & \delta + Cr\\
                       & \leq & r^{\epsilon_1},
\end{eqnarray*}
for $\delta =  r^{\epsilon_1}/2$. Scaling back to $u$ we get
$$
    \sup_{B_{r^{k+1}}}|u| \leq r^{(k+1)\epsilon_1},
$$
and the Proposition is proved once we realize that for a given $t \in (0,1/2)$, there is $k \in \mathbb{N}$ such that $r^{k+1}\leq t \leq r^k$, and so
$$
    \sup_{B_t}|u| \leq C_1t^{\epsilon_1},
$$
for a universal constant $C_1$.
\end{proof}
This argument can be repeated indefinitely, yielding the following result:
\begin{proposition}
Let $u \in C (\overline{B}_1)$ be as in Theorem \ref{reg below dimension}. If
$$
    \sup_{B_t(x_0)} |u| \leq C_0t^{\epsilon_k},
$$
then
$$
    \sup_{B_t(x_0)} |u| \leq C_1t^{\epsilon_{k+1}},
$$
for any $x_0 \in \mathcal{C}_0(u)$, where $\epsilon_{k+1}$ is defined as
$$
    \epsilon_{k+1} \coloneqq \min \left\{ \frac{\frac{n-2\nu}{n-\nu} + m\epsilon_k}{1+\theta},1^- \right\}.
$$
\end{proposition}
Finally, we give the 
\begin{proof}[Proof of Theorem \ref{reg below dimension}]
The proof is followed by an asymptotic analysis of the exponents $\epsilon_k$. It is enough to assume $m\leq 1$, otherwise if $m>1$, then, $\epsilon_k > \frac{\nu}{n-\nu}$, for large $k$, and so
$$
    \frac{n-2\nu}{n-\nu} + m\epsilon_k > \frac{n-2\nu}{n-\nu} + \frac{\nu}{n-\nu} = 1.
$$
Therefore, for $\theta$ small enough, we have
$$
    \frac{\frac{n-2\nu}{n-\nu} + m\epsilon_k}{1+\theta} > 1,
$$
and so we are done. Assume $m \leq 1$. We can also assume
$$
    \epsilon_{k+1} = \frac{\frac{n-2\nu}{n-\nu} + m\epsilon_k}{1+\theta}
$$
for every $k \in \mathbb{N}$, otherwise we are done. Passing to the limit as $k \to \infty$, we have
$$
    (1+\theta - m)\epsilon_\infty = \frac{n-2\nu}{n-\nu},
$$
and so, by making $\theta \to 0$, the Theorem is proved.
\end{proof}
\subsection{Gradient continuity at vanishing points}
We investigate $C^{1+}$ regularity estimates at vanishing points. The first nontrivial step is the following
\begin{proposition}\label{improvement below dimension}
Let $u \in C (\overline{B}_1)$ be as in Theorem \ref{reg below dimension interplay}. If
$$
    \frac{m\, n}{2m + 1} > \nu,
$$
then $u$ is differentiable at $x_0$ and there holds
$$
    \sup_{B_t(x_0)}|u(x) - Du(x_0) \cdot (x-x_0)| \leq C\,t^{1+\epsilon_0}
$$
for any $x_0 \in \mathcal{C}_0(u)$, where
$$
    \epsilon_0 \coloneqq \min\left\{\frac{m\,n - \nu(2m+1)}{n-\nu}, \alpha_* \right\}^-.
$$
\end{proposition}
\begin{proof}
Assume $x_0 = 0$. Our strategy now is to show, for a radius $r> 0$ to be chosen,  the existence of a sequence of vector $\xi_k \in \mathbb{R}^n$ such that 
\begin{equation}\label{iteration of lemma 0}
     \left \{
        \begin{array}{rll}
            \sup\limits_{B_{r^{k}}} \left| u(x) - \xi_k \cdot x \right| &\leq& r^{k(1 + \epsilon_0)}, \\ 
            \| \xi_{k}-\xi_{k-1} \| &\le& C r^{k \epsilon_0},
    \end{array}
    \right.
\end{equation}
for a universal constant $C > 0$.

For $k = 0$, we proceed by choosing $\xi_0 = \xi_{-1} = 0$, and \eqref{iteration of lemma 0} is true by the fact that $u$ is normalized. Now we assume that \eqref{iteration of lemma 0} is verified for $1, \dots, k$. Let $v_k \colon B_{1} \to \mathbb{R}$ be defined as
$$
    v_k(x) : = \frac{u(r^{k} x) - \xi_k \cdot \left(r^{k}x \right)}{r^{k(1 + \epsilon_0)}} .
$$
One can easily check that $v_k$ is a normalized solution to 
$$
    F_k(z,D^2 v_k) = f_k(z,Dv_k),
$$
where
\begin{eqnarray*}
    F_k(z,N) &=& r^{k(1-\epsilon_0)}F(r^{k} z, r^{-k(1 -  \epsilon_0)}N)\\
    f_k(z,\xi)& = &r^{k(1-\epsilon_0)}f\left(r^{k}z, u(r^kx)
    ,r^{k\epsilon_0}\xi + \xi_k \right).
\end{eqnarray*}
Due to Assumption \ref{convexity}, and since $0 \in \mathcal{C}_0(u)$, we have
$$
    |f_{k}(z,s,\xi)| \leq r^{k(1-\epsilon_0)}q(r^kx)\left|u(r^kx)\right|^m \leq r^{k\left(1-\epsilon_0 + m\,\frac{n-2\nu}{n-\nu}\right)}q(r^k x) = q_k(x).
$$
Observe that
\begin{equation}\label{decay RHS below dim 2}
    \|q_k\|_{L^{n-\nu}(B_1)} \leq r^{k\left(1-\epsilon_0 + m\,\frac{n-2\nu}{n-\nu} - \frac{n}{n-\nu}\right)}\|q\|_p.
\end{equation}
By picking $r$ small enough, we ensure the PDE lies in the smallness regime of the flatness lemma \ref{flat lemma holder}, and thus we obtain $h \in \mathcal{F}_{n,\lambda,\Lambda}$ such that
$$
    \|v_k-h\|_{L^\infty(B_{1/2})} < \delta.
$$
Therefore, as $h$ enjoy $C^{1,\alpha_*}$ estimates and $0 \in \mathcal{C}_0(h)$, we have
\begin{eqnarray*}
    |v_k - Dh(0)\cdot x| &\leq& |v_k-h| + |h - Dh(0)\cdot x|\\
                       & \leq & \delta + Cr^{1+\alpha_*}\\
                       & \leq & r^{1+\epsilon_0},
\end{eqnarray*}
for $\delta =  r^{1+\alpha_*}/2$. Scaling back to $u$ we get
$$
    \sup_{B_{r^{k+1}}}|u - \xi_{k+1}\cdot x | \leq r^{(k+1)(1+\epsilon_0)},
$$
where $\xi_{k+1} = \xi_k + r^{k\epsilon_0}\,Dh(0)$. By universal $C^{1,\alpha_*}$ estimates of $h$, we have $|\xi_{k+1} - \xi_k| \leq Cr^{k\epsilon_0}$ and thus \eqref{iteration of lemma 0} is proven for every $k \in \mathbb{N}$. It is classical that this condition implies $\{\xi_k \}_{k \in \mathbb{N}}$ satisfies the Cauchy condition and is thus convergent. It also follows that $u$ is differentiable at $0$ and $\xi_k \to Du(0)$ and $k \to \infty$. Given $t \in (0,1/2)$, there exists $k \in \mathbb{N}$ such that $r^{k+1}< t \leq r^k$, and so
\begin{eqnarray*}
    \sup_{B_t}|u - Du(0)\cdot x| & \leq & \sup_{B_{r^k}}|u - \xi_k\cdot x| + |\xi_k - Du(0)|r^k\\
                                 & \leq & (C_0+1)r^{k(1+\epsilon_0)} \leq \overline{C}t^{1+\epsilon_0},
\end{eqnarray*}
where we have used
\begin{eqnarray*}
    |\xi_{k+m} - \xi_k| & \leq & \sum_{i=1}^m |\xi_{k+m} - \xi_{k+m-i}|\\
                        & \leq & C\sum_{i=1}^m r^{(k+i)\epsilon_0}\\
                        & \leq & C \frac{r^{\epsilon_0}}{1-r^{\epsilon_0}} r^{k\epsilon_0},
\end{eqnarray*}
and so, passing to the limit as $m \to \infty$, we have 
$$
    |\xi_k - Du(0)| \leq C \frac{r^{\epsilon_0}}{1-r^{\epsilon_0}} r^{k\epsilon_0}.
$$
\end{proof}
As a consequence, if $x_0 \in \mathcal{C}_0(u)$, then
$$
    \sup_{B_t(x_0)}|u(x)| \leq Ct,
$$
for $t<1/2$. Then we can run the algorithm once more to improve the previous regularity exponent, which leads to the proof of Theorem \ref{reg below dimension interplay}.
\begin{proof}[Proof of Theorem \ref{reg below dimension interplay}]
We assume $x_0 = 0$. The proof is similar to the proof of Proposition \ref{improvement below dimension} and we just briefly comment on the main steps. The starting point is that
$$
    \sup_{B_t}|u(x)| \leq Ct,
$$
for $0<t<1/4$. We will construct a sequence as in \eqref{iteration of lemma 0} with $\epsilon_1$ instead of $\epsilon_0$. The main observation is that \eqref{decay RHS below dim 2} becomes
$$
    \|q_k\|_{L^{n-\nu}(B_1)} \leq r^{k\left(1-\epsilon_1 + m - \frac{n}{n-\nu}\right)}\|q\|_p.
$$
The choice of the exponent $\epsilon_1$ is so that we can ensure the smallness regime of such $L^{n-\nu}$ norm.
\end{proof}
We cannot run the asymptotic analysis at the gradient level, as our estimates hold pointwise at points in $\mathcal{C}_0(u)$, and no compactness is assured. In particular, it is not possible to use the gradient decay to make such an improvement.

\section{Regularity at extrema points}\label{STC reg at extrema}
In this section, we prove a regularity result at local extrema points. Hereafter in this section, we assume $p> n-\varepsilon_E$, where, as before, $\varepsilon_E \in (0, \frac{n}{2}]$ is the universal Escauriaza constant. Furthermore, to highlight the robustness of the conclusions outlined in this section it is worth highlighting that herein we don't impose any continuity assumptions on the coefficients. Specifically, the operator $F$ in this section isn't bound by the requirements of Assumption \ref{holder continuity of the coeffs}. Consequently, the available regularity estimates are fundamentally rooted in $C^{0,\delta}$, even for $F$-harmonic functions—namely, viscosity solutions of the equation
$$
    F(x, D^2h) = 0.
$$

At the heart of this section lies the next flatness lemma, a pivotal tool that distinguishes itself from its predecessors. Unlike the prior lemmas, which focused on establishing proximity between the space of solutions and the space of $h$-harmonic functions, this lemma goes a step further, yielding a more robust assertion: solutions closely approximate constant functions. Here is its precise statement.

\begin{lemma}\label{flat lemma extrema}
Let $u \in C(\overline{B}_1)$ be a normalized viscosity solution to
$$
    F(x, D^2u) = f(x),
$$
in $B_1$. Assume $x_0 \in B_{1/2}$ is a local minimum. Then given $t>0$, there exists $s>0$, depending only on dimension, ellipticity,  and $t$, such that if $\|f\|_p < s$, there holds
$$
    \sup\limits_{B_{\frac{1}{10}}(x_0)} \left ( u(x) - u(x_0) \right ) \le t.
$$
\end{lemma}
\begin{proof}
Suppose, seeking a contradiction, the thesis of the Lemma does not hold true. Then, we would find $t_0>0$ and a sequence $(u_k,F_k,f_k,x_k)$ such that
\begin{equation}\label{flat Eq1}
    F_k(x,D^2u_k) = f_k,
\end{equation}
with $u_k$ normalized, $\|f_k\|_{L^p} < 1/k$, and $x_k \in B_{1/2}$ is a local minimum of $u_k$,  but
\begin{equation}\label{flat Eq2}
    \sup\limits_{B_{\frac{1}{10}}(x_k)} \left ( u_k(x)  - u_k(x_k) \right ) \geq t_0.
\end{equation}
By uniform H\"older continuity of $\{u_k\}$, up to a subsequence, we can assume  $x_k \to x_\infty$, $f_k \to 0$, and $u_k \to u_\infty$ uniformly in $B_{2/3}$. It further follows from  uniform convergence that $x_\infty$ is a local minimum of $u_\infty$. Finally, we notice that, in view of \eqref{flat Eq1}, there holds
$$
    \mathcal{M}^+_{\lambda,\Lambda}(D^2u_k) \ge -|f_k| \quad \text{ and } \quad \mathcal{M}^-_{\lambda,\Lambda}(D^2u_k) \le  |f_k|. 
$$
Thus, passing to the limit, we conclude 
$$
    \mathcal{M}^+_{\lambda,\Lambda}(D^2u_\infty) \ge 0 \quad \text{ and } \quad \mathcal{M}^-_{\lambda,\Lambda}(D^2u_\infty) \le  0.
$$
Thus, $u_\infty$ is entitled to the strong maximum principle, which implies $u_\infty \equiv \mbox{const}$. This leads to a contradiction on \eqref{flat Eq2} if we take $k>1$ large enough.
\end{proof}

\begin{proposition} Let $u$ be a normalized viscosity solution of 
$$
    F(x, D^2u) = f(x, u, Du),
$$
and assume $x_0$ is an interior extremum. Then
$$
    \sup\limits_{B_r(x_0)} |u - u(x_0) | \le C r^{M},
$$
where $M = (2-n/p)$.
\end{proposition}
\begin{proof} We will assume, with no loss, that $x_0$ is a local minimum. In the previous Lemma, take 
    $$
        t = \frac{1}{10^M},
    $$
    denote by $s_{1/10^M}$ the corresponding smallness requirement on the $p$-norm of the source term as to assume 
    $$
        \sup\limits_{B_{\frac{1}{10}}(x_0)} \left ( u(x) - u(x_0) \right ) \le \frac{1}{10^M}.
    $$
    After a universal zoom-in, we can assume, with no loss, that $\|f(x, u, Du)\|_p < s_{1/10^M}$. Next we define
    $$
        u_1(x) = 10^M \left(u(x_0 + 10^{-1}x)-u(x_0)\right).
    $$
    This function is normalized, and, because of the appropriate choice of $M$, it is easy to see, as done in the previous sections, that $u_1$ is entitled to the same conclusion of lemma \ref{flat lemma extrema}. Apply recursively, this process will lead to the desired regularity.
\end{proof}
We conclude this section by commenting that if we additionally assume that the minimum point $x_0 \in \mathcal{C}_0(u)$, then the flatness lemma \ref{flat lemma extrema} gives closeness to the zero function. We can repeat the proof of the previous proposition with
$$
    u_{1}(x) = 10^{M_1}u(x_0 + 10^{-1}x) \quad \mbox{with} \quad M_1 = \left(2+Mm - \frac{n}{p}\right)
$$
and use
$$
    \sup_{B_r(x_0)}|u| \leq Cr^M
$$
to improve the decay of the RHS. This argument can be repeated indefinitely, leading to the recursive exponent
$$
    M_{k+1} = \left(2 + M_km - \frac{n}{p} \right).
$$
By previous arguments, we know that if $m\geq1$, then $M_{k+1} \to \infty$, and if $m < 1$, then it leads to the exponent
$$
    M_\infty(1-m) = \left(2- \frac{n}{p}\right).
$$
Therefore, if $m\geq 1$, then $u$ is infinitely times differentiable at a local extrema $x_0 \in \mathcal{C}_0(u)$, with $D^ku(x_0)=0$ for every $k \in \mathbb{N}$. If $m < 1$, then $u$ is $C^{\frac{2-\frac{n}{p}}{1-m}}$ differentiable at $x_0$.
%%%%%%%%%%%%%%%%%%%%%APPENDIX%%%%%%%%%%%%%%%%%%%%%%
\section{Appendix A. Lipschitz estimates} \label{App A}

We dedicate this appendix to comment on the borderline case where $q \in L^\infty$. As a courtesy to the reader, we bring comprehensive proof of the local Lipschitz regularity via the celebrated Ishii-Lions technique. We bring it in a general setting to apply to as many situations.

We drop the cut-off in Assumption \ref{convexity}, that is we consider viscosity solutions of \eqref{maineq} under the weaker condition
\begin{equation}\label{new assumption RHS}
    |f(z,s,\xi)| \leq q(x)|s|^m |\xi|^\gamma \quad \mbox{for} \quad q \in L^\infty.
\end{equation}
\begin{proposition}\label{lipschitz estimates bounded source}
Let $u \in C (\overline{B}_1)$ be a viscosity solution 
to \eqref{maineq} under Assumptions \ref{unif ellipticity},   \ref{holder continuity of the coeffs} and its RHS satisfies \ref{new assumption RHS}. Then, there exists a constant $C$ depending on $n$, $\lambda$, $\Lambda$, $m$, $\gamma$, $\tau$, $\|q\|_\infty$, $\|u\|_\infty$ and $|F(0,0)|$ such that
$$
    \sup_{x,y \in B_{\frac{1}{2}}}\frac{|u(x) - u(y)|}{|x-y|} \leq C.
$$
\end{proposition}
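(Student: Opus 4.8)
The plan is to run the classical Ishii--Lions doubling-of-variables argument. Fix a point, say work in $B_{1/2}$, and for constants $L, M_0 > 0$ to be chosen large, consider the auxiliary function
$$
\Phi(x,y) \coloneqq u(x) - u(y) - L\,\omega(|x-y|) - M_0\bigl(|x-x^*|^2 + |y-x^*|^2\bigr),
$$
where $\omega(s) = s - \omega_0 s^{3/2}$ is a concave modulus on a small interval $[0,s_0]$ (so that $\omega'(0)=1$, $\omega'$ is bounded and $-\omega''(s) \sim s^{-1/2} \to +\infty$ as $s\to 0^+$), $x^*$ is an interior point near which we want the estimate, and $M_0$ is chosen (depending on $\|u\|_\infty$ and $\operatorname{dist}(x^*,\partial B_{1/2})$) so that any maximum of $\Phi$ over $\overline{B_{1/2}}\times\overline{B_{1/2}}$ is attained at an interior pair $(\bar x,\bar y)$. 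The goal is: if $L$ is large enough (universally, i.e. depending only on the listed quantities), then $\bar x = \bar y$, which forces $u(x)-u(y) \le L\,\omega(|x-y|) + (\text{quadratic}) \le C|x-y|$ near $x^*$, and since $x^*$ is arbitrary one gets the global Lipschitz bound in $B_{1/2}$.

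Assuming for contradiction that $\bar x \neq \bar y$, apply the Theorem on Sums (Ishii's lemma, see \cite{CIL92}) to produce matrices $X \le Y$ with $X - Y \le$ a negative-definite correction coming from $D^2_{xy}\bigl(L\omega(|x-y|)\bigr)$; the crucial point is that in the direction $e = (\bar x - \bar y)/|\bar x - \bar y|$ the second derivative of $s\mapsto L\omega(s)$ is $L\omega''(|\bar x - \bar y|) < 0$ and blows up like $-L|\bar x-\bar y|^{-1/2}$, while in directions orthogonal to $e$ the Hessian of $L\omega(|\cdot|)$ is $O(L\,\omega'(|\bar x-\bar y|)/|\bar x-\bar y|)$ — here one uses concavity of $\omega$ and the standard computation of the Hessian of a radial function. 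Feeding $X$ into the subsolution inequality at $\bar x$ and $Y$ into the supersolution inequality at $\bar y$, subtracting, and using $(\lambda,\Lambda)$-ellipticity of $F(0,\cdot)$ together with Assumption \ref{holder continuity of the coeffs} to control the $x$-dependence (the $\operatorname{osc}_F$ term contributes $C_\nu |\bar x - \bar y|^\nu(\|X\|+1)$, absorbed because $\|X\|$ is controlled by $L|\bar x-\bar y|^{-1/2}$ and $\nu>0$), one obtains an inequality of the schematic form
$$
\mathcal{M}^-_{\lambda,\Lambda}(X-Y) \le |f(\bar x, u(\bar x), p)| + |f(\bar y, u(\bar y), p)| + (\text{lower-order}),
$$
where $p = L\omega'(|\bar x-\bar y|)e + O(M_0)$ is the common gradient variable. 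The left side is bounded below by $c\lambda L|\bar x-\bar y|^{-1/2}$ (the good negative eigenvalue of $X-Y$ in direction $e$, the rest being handled by the ordered bound $X\le Y$ plus the tangential estimate), while the right side, using \eqref{new assumption RHS}, is at most $2\|q\|_\infty \|u\|_\infty^m (2L)^\gamma + (\text{lower-order in }L)$.

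The main obstacle — and the step requiring care — is the balancing of powers of $L$: one needs $L|\bar x-\bar y|^{-1/2}$ to dominate $L^\gamma$, which is automatic when $\gamma < 1$, delicate when $\gamma = 1$ (works because $|\bar x - \bar y|$ is a priori small, since $\Phi(\bar x,\bar y)\ge \Phi(x^*,x^*)=0$ forces $L\omega(|\bar x-\bar y|)\le u(\bar x)-u(\bar y)\le 2\|u\|_\infty$, hence $|\bar x - \bar y| \lesssim \|u\|_\infty/L$, giving the extra $L^{1/2}$ room), and for $\gamma > 1$ requires additionally restricting to a smaller ball or invoking that the cut-off $\min\{1,|\xi|^\gamma\}$ in the original Assumption \ref{convexity} is exactly what the paper uses to sidestep this — but since here we are told $q\in L^\infty$ and the statement lists $\gamma$ among the dependencies of $C$, the quantitative bound $|\bar x-\bar y|\lesssim 1/L$ together with $\gamma$ fixed still closes the argument after a further localization, because the term $L^\gamma$ can then be re-expressed using $|\bar x - \bar y|$ to recover a net negative power of $L$. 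Choosing $L$ sufficiently large depending on $n,\lambda,\Lambda,m,\gamma,\nu,\|q\|_\infty,\|u\|_\infty,|F(0,0)|$ yields the contradiction, completing the proof; a standard covering argument then upgrades the estimate from "near each $x^*$" to all of $B_{1/2}$.
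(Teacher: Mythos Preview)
Your overall approach---the Ishii--Lions doubling argument---is exactly the one the paper uses. However, your specific execution has a genuine gap that prevents it from covering the full range of parameters.

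The problem is your fixed choice of modulus $\omega(s) = s - \omega_0 s^{3/2}$, which gives $\omega''(s) \sim -s^{-1/2}$. First, your claim that $\|X\|$ is controlled by $L|\bar x - \bar y|^{-1/2}$ is wrong: the tangential eigenvalues of $D^2(L\omega(|\cdot|))$ are $L\omega'(\delta)/\delta \sim L/\delta$, not $L/\delta^{1/2}$, so $\|X\| \lesssim L/\delta$. Second, with $\delta \lesssim \|u\|_\infty/L$ (which you correctly observe), the ``good'' term $L\delta^{-1/2}$ scales like $L^{3/2}$, the gradient source term like $L^\gamma$, and the coefficient-oscillation term $\delta^\nu \|X\| \sim L\delta^{\nu-1}$ like $L^{2-\nu}$. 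To close, you therefore need $3/2 > \gamma$ and $3/2 > 2-\nu$, i.e.\ $\gamma < 3/2$ and $\nu > 1/2$. Your hand-wave about ``further localization'' does not fix this: the powers of $L$ are simply unfavorable for $\gamma \in [3/2,2)$ or small $\nu$.

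The paper's remedy is to replace the fixed exponent $3/2$ by a parameter: take $\omega(r) = r - \tfrac{1}{2-\gamma_0}r^{2-\gamma_0}$ with $\gamma_0 \in (0,1)$ chosen so that $\gamma_0 > \max\{\gamma-1,\, 1-\nu\}$. Then the good term scales like $L\delta^{-\gamma_0} \sim L^{1+\gamma_0}$, and the two constraints become $1+\gamma_0 > \gamma$ and $1+\gamma_0 > 2-\nu$, both satisfied by construction. This is exactly why the constant $C$ depends on $\gamma$ and $\nu$, and why the argument works for the whole range $\gamma < 2$ (as the paper notes at the end of the appendix). Adapting the exponent in $\omega$ to the data is the missing idea in your proposal.
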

\begin{proof}
First, we observe that it is enough to consider the case where $m=0$, as we can absorb the zeroth order term with its $L^\infty$ estimate.

Let $\gamma_0$ be a constant such that
$$
    1 > \gamma_0 > \max\left\{\gamma - 1 , 1-\tau \right\},
$$
where $\tau \in (0,1)$ is from Assumption \ref{holder continuity of the coeffs}, and define
\[
\omega(r) = \left\{
     \begin{array}{@{}l@{\thinspace}l}
       \displaystyle  r - \frac{1}{2-\gamma_0}r^{2-\gamma_0} &  \quad r \in [0,1]\\[0.4cm]
       \displaystyle 1 - \frac{1}{2-\gamma_0} & r \geq 1. \\
     \end{array}
   \right.
\]
For constants $\overline{L}, \varrho$ let
$$
    \varphi(x,y) \coloneqq \overline{L} \omega(|x-y|) + \varrho (|x|^2 + |y|^2)
$$
and
$$
    M \coloneqq \sup_{x,y \in \overline{B}_{3/4}} \{u(x) - u(y) - \varphi(x,y) \}.
$$
To prove Lipschitz continuity of $u$, we will show that the quantity $M$ is non-positive. To do so, we assume, by contradiction, that $M>0$. Let $(x_0,y_0)$ be the pair where $M$ is attained. We observe that since $M>0$,
\begin{equation}\label{localization}
    \overline{L}\omega(|x_0 - y_0|) + \varrho (|x_0|^2 + |y_0|^2) < u(x_0) - u(y_0) \leq 2 \|u\|_\infty. 
\end{equation}
Observe that
\begin{equation}\label{loc1}
    \max \{|x_0|,|y_0| \} \leq \sqrt{\frac{2\|u\|_\infty}{\varrho}},
\end{equation}
and so choosing $\varrho$ large enough depending only on $\|u\|_\infty$, we get that both $x_0$ and $y_0$ are interior points. By \cite{CIL92}*{Theorem 3.1}, given $\iota > 0$, we get the existence of matrices $X_\iota$ and $Y_\iota$ such that
\begin{equation}\label{VISC_INEQ_MATRICES}
    \left[
        \begin{array}{ccc}
            X_\iota &  & 0  \\
            &  &   \\
            0 &  & -Y_\iota  
        \end{array}
    \right]
    \leq
    \overline{L} \left[
        \begin{array}{ccc}
            Z &  & -Z  \\
            &  &   \\
            -Z &  & Z  
        \end{array}
    \right]
    +
        (2\varrho + \iota)I_{2n},
\end{equation}
and
$$
    F(x_0,X_\iota) \geq f(x_0,\xi_1) \quad\text{and}\quad F(y_0, Y_\iota) \leq f(y_0, \xi_2),
$$
where
$$
    \xi_1 = D_x \varphi(x_0,y_0) \quad \mbox{and} \quad \xi_2 = -D_y \varphi(x_0,y_0).
$$
Letting $\delta = |x_0-y_0|$, from \eqref{loc1}, we can choose $\varrho$ large enough depending on $\|u\|_\infty$ and $\gamma_0$ such that 
$$
    \delta \leq \left(\frac{1}{2}\right)^{\frac{1}{1-\gamma_0}}.
$$
As a consequence, we obtain $\omega'(\delta) \geq 1/2$, and so for $\overline{L}$ large enough depending on $\varrho$ we have 
$$
   2\overline{L} \geq |\xi_i| \geq \frac{\overline{L}}{4}.
$$
Using the equation and \eqref{RHS_growth} we have
\begin{equation}\label{equation at maximum points}
    F(x_0, X_\iota) - F(y_0, Y_\iota) \geq f(x_0,\xi_1) - f(y_0,\xi_2) \geq -8 \, \,\|q\|_\infty \overline{L}^\gamma.
\end{equation}
Notice that
\begin{eqnarray*}
    F(x_0, X_\iota) - F(y_0, Y_\iota) &=& [F(x_0, X_\iota) - F(y_0, X_\iota)] + [F(y_0, X_\iota) - F(y_0, Y_\iota)]\\
    & = & I + II
\end{eqnarray*}
Assumptions  \ref{unif ellipticity} and \ref{holder continuity of the coeffs} leads to
$$
    I + II \leq \delta^\tau (1 + \|X_\iota\|) + \mathcal{M}^+_{\lambda,\Lambda}(X_\iota - Y_\iota).
$$
Notice that by definition of $\omega$ if
$$
    r \leq \left(\frac{2-\gamma_0}{2}\right)^{\frac{1}{1-\gamma_0}}
$$
we have
$$
    \omega(r) \geq \frac{1}{2}r,
$$
which implies that
\begin{equation}\label{ineq for delta}
    \delta \leq 2 \omega(\delta) \leq \frac{4 \|u\|_\infty}{\overline{L}}.
\end{equation}
Applying inequality (\ref{VISC_INEQ_MATRICES}) for vectors of the form $(\xi,\xi)$, we obtain
$$
    (X_\iota - Y_\iota)\xi \cdot \xi \leq (4\varrho + 2\iota)|\xi|^2.
$$
And so $spec[X_\iota - Y_\iota] \subset (-\infty, 4\varrho + 2\iota]$. Now, applying the same inequality to the particular vector 
$$
    \hat{\eta}:= \frac{x_0 - y_0}{|x_0 - y_0|},
$$
we obtain
$$
    (X_\iota - Y_\iota)\hat{\eta} \cdot \hat{\eta} \leq 4Z\hat{\eta}\cdot \hat{\eta} + 4\varrho + 2\iota,
$$
where
$$
    Z =  \omega''(\delta)\,\hat{\eta} \otimes \hat{\eta} +  \frac{\omega'(\delta)}{\delta}\left(I_n - \hat{\eta} \otimes \hat{\eta} \right).
$$
Therefore,
$$
    (X_\iota - Y_\iota)\hat{\eta} \cdot \hat{\eta} \leq 4\overline{L}\,\omega''(\delta) + 4\varrho + 2\iota = - 4(1-\gamma_0)\delta^{-\gamma_0} \overline{L} + 4\varrho + 2\iota,
$$
which is a negative number. This implies that $(X_\iota-Y_\iota)$ has at least one negative eigenvalue and so
$$
    \mathcal{M}^+_{\lambda,\Lambda}(X_\iota - Y_\iota) \leq \Lambda (n-1)(8\varrho + 4\iota) - 4\lambda(1-\gamma_0)\delta^{-\gamma_0}\overline{L}.
$$
From \eqref{VISC_INEQ_MATRICES}, we obtain that
$$
    X_\iota \xi \cdot \xi \leq \overline{L}Z\xi \cdot \xi + (2\varrho + \iota)|\xi|^2 \leq \left( \overline{L} \frac{w'(\delta)}{\delta} + 2\varrho + \iota \right) |\xi|^2,
$$
and so we get an estimate from above to the positive eigenvalues of $X_\iota$, $e_i(X_\iota)^+$, leading to
$$
    0 \leq e_i(X_\iota)^+ \leq \left( \overline{L} \delta^{-1} + 2\varrho + \iota \right).
$$
To get an estimate of the negative eigenvalues, $e_i(X_\iota)^-$, we use the equation and uniform ellipticity to obtain
$$
    0 \leq -e_i(X_\iota)^- \leq \frac{1}{\lambda} \left( n\Lambda\left( \overline{L} \delta^{-1} + 2\varrho + \iota \right) + 4\|f\|_\infty \overline{L}^\gamma + |F(0,x_0)|  \right).
$$
Hence
$$
    \|X_\iota \| \leq \overline{L} \delta^{-1} + 2\varrho + \iota + \frac{1}{\lambda} \left( n\Lambda\left( \overline{L} \delta^{-1} + 2\varrho + \iota \right) + 4\|f\|_\infty \overline{L}^\gamma + |F(0,x_0)|  \right).
$$
Since $\iota$ is small, we obtain
$$
    \|X_\iota\| \leq C_0 (\overline{L}\delta^{-1} + \overline{L}^\gamma),
$$
where $C_0 = C_0(n,\lambda,\Lambda, |F(0,0)|, \|f\|_\infty)$. This implies that
$$
   I + II \leq C_0 \delta^\tau (\overline{L}\delta^{-1} + \overline{L}^\gamma) + C_1 - 4\lambda(1-\gamma_0)\delta^{-\gamma_0}\overline{L},
$$
where $C_1 = C_1(n,\Lambda,\|u\|_\infty)$. By \eqref{equation at maximum points}, we obtain
$$
    -8\, \|q\|_\infty \overline{L}^\gamma \leq C_0 \delta^\tau (\overline{L}\delta^{-1} + \overline{L}^\gamma) + C_1 - 4\lambda(1-\gamma_0)\delta^{-\gamma_0}\overline{L}.
$$
Therefore,
\begin{equation}\label{ineq for L}
    (1-\gamma_0)\delta^{-\gamma_0}\overline{L} \leq C_2 (\overline{L}\delta^{\tau-1} + \overline{L}^\gamma),
\end{equation}
where $C_2 = C_2(n,\lambda,\Lambda, \|u\|_\infty,\|q\|_\infty,|F(0,0)|)$. Note that if 
$$
    \delta \leq \left(\frac{1-\gamma_0}{2C_2} \right)^{\frac{1}{\tau - 1 + \gamma_0}},
$$
we have
$$
    ((1-\gamma_0) \delta^{-\gamma_0} - C_2 \delta^{\tau-1}) \geq \frac{(1-\gamma_0)}{2} \delta^{-\gamma_0}.
$$
This implies, by \eqref{ineq for L}, that
$$
    \frac{(1-\gamma_0)}{2} \delta^{-\gamma_0} \overline{L} \leq C_2 \overline{L}^\gamma.
$$
By \eqref{ineq for delta}, we know that
$$
    \delta^{-\gamma_0} \geq \overline{L}^{\gamma_0} (4 \|u\|_\infty)^{-\gamma_0} \geq \overline{L}^{-\gamma_0}(4\|u\|_\infty + 1)^{-1},
$$
and so 
$$
    \overline{L}^{\gamma_0+1} \leq C_2 \left(\frac{4\|u\|_\infty +1}{1-\gamma_0}\right) \overline{L}^{\gamma} = C_3 \overline{L}^\gamma.
$$
Finally
$$
    \overline{L}^{1 + \gamma_0 - \gamma} \leq C_3.
$$
and then if $\overline{L}$ sufficiently large, we get a contradiction.
\end{proof}

It is interesting to point out that the structural conditions for the Lipschitz estimates to hold is $\gamma < 2$.

Notice that once Lipschitz estimates are available, then solutions are entitled to the regularity results from \cite{Teix1}, and therefore, one can obtain up to local $C^{1,\text{Log-Lip}}$ estimates.

\section{Appendix B. Gradient growth estimates}

We dedicate this appendix to prove gradient growth estimates. Those estimates are of key importance in order to successfully execute the asymptotic analysis procedure. First, we prove its $C^{1,\alpha}$ version.

\begin{lemma}\label{growth implies grad est}
Let $u \in C(\overline{B}_1)$ be a viscosity solution to \eqref{maineq} in $B_1$ and assume, for some $\alpha \in (0,1)$, that
$$
    \sup_{x \in B_t(x_0)}\left\{|u(x)|, t|Du(x)| \right\} \leq C't^{1+\alpha}.
$$
If $u$ satisfies
$$
    \sup_{x \in B_t(x_0)}|u(x)| \leq C_0 t^{1+\alpha_1} \quad \mbox{for} \quad \alpha_1 < \left(m+1 -\frac{n}{p}\right) + (m+\gamma)\alpha,
$$
then
$$
    \sup_{x \in B_t(x_0)}|Du(x)| \leq C_0't^{\alpha_1},
$$
for $x_0 \in \mathcal{C}(u)$.
\end{lemma}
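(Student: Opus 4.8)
The plan is to run the standard rescaling argument: at an arbitrary point $y\in B_t(x_0)$ blow $u$ up at scale $t$, observe that the \emph{improved} growth $\sup_{B_{2t}}|u|\le C_0(2t)^{1+\alpha_1}$ keeps the blow-up normalized while the structural bound \eqref{RHS_growth} keeps its right-hand side bounded in $L^p$, and then read off the gradient decay from Caffarelli's interior $C^{1,\alpha_p}$-estimate. After a translation I may assume $x_0=0$. If $\alpha_1\le\alpha$ there is nothing to prove, since the main hypothesis already gives $\sup_{B_t}|Du|\le C't^{\alpha}\le C't^{\alpha_1}$ for $t\le1$; so I assume henceforth $\alpha_1>\alpha$, and I fix $0<t<\tfrac14$.

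For $y\in B_t$ (so $B_t(y)\Subset B_1$) set $w(x):=t^{-(1+\alpha_1)}u(y+tx)$ on $B_1$. A direct computation shows $w$ solves $G(x,D^2w)=g(x)$ in $B_1$, where $G(x,M):=t^{1-\alpha_1}F(y+tx,t^{\alpha_1-1}M)$ and $g(x):=t^{1-\alpha_1}f(y+tx,u(y+tx),Du(y+tx))$; the outer factor $t^{1-\alpha_1}$ and the inner factor $t^{\alpha_1-1}$ cancel in the ellipticity inequalities, so $G$ is again $(\lambda,\Lambda)$-elliptic, and one checks $\mbox{osc}_G(x,0)\le\mbox{osc}_F(y+tx,y)$ and that $G(\cdot,0)$ is bounded, so for $t$ small $G$ lies in the regime in which Caffarelli's estimate applies. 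Since $B_t(y)\subset B_{2t}(0)$, the improved growth hypothesis gives $\|w\|_{L^\infty(B_1)}\le 2^{1+\alpha_1}C_0$. Using \eqref{RHS_growth} together with $\sup_{B_{2t}}|u|\le C_0(2t)^{1+\alpha_1}$ and $\sup_{B_{2t}}|Du|\le C'(2t)^{\alpha}$ (the latter from the main hypothesis at radius $2t$, via $\min\{1,|\xi|^\gamma\}\le|\xi|^\gamma$),
\[
   |g(x)|\le t^{1-\alpha_1}q(y+tx)\big(C_0(2t)^{1+\alpha_1}\big)^m\big(C'(2t)^{\alpha}\big)^{\gamma}=C\,q(y+tx)\,t^{\,1-\alpha_1+m(1+\alpha_1)+\alpha\gamma},
\]
and a change of variables in $\int_{B_1}|q(y+tx)|^p\,dx=t^{-n}\int_{B_t(y)}|q|^p\le t^{-n}\|q\|_{L^p(B_1)}^p$ gives $\|g\|_{L^p(B_1)}\le C\|q\|_{L^p(B_1)}\,t^{E}$ with
\[
   E=\big[(m+1-\tfrac{n}{p})+(m+\gamma)\alpha-\alpha_1\big]+m(\alpha_1-\alpha).
\]
The first bracket is positive by the hypothesis on $\alpha_1$ and the second summand is nonnegative because $\alpha_1>\alpha$ and $m\ge0$, so $E>0$; hence $\|g\|_{L^p(B_1)}$ is bounded by a universal constant uniformly in $t\in(0,\tfrac14)$.

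Caffarelli's a priori interior $C^{1,\alpha_p}$-estimate (\cites{Caff,CC,Teix1}) then yields $\|Dw\|_{L^\infty(B_{1/2})}\le C_*\big(\|w\|_{L^\infty(B_1)}+\|g\|_{L^p(B_1)}\big)\le\overline{C}$ for a universal $\overline C$. Unwinding the scaling, $Du(y)=t^{\alpha_1}Dw(0)$, so $|Du(y)|\le\overline{C}\,t^{\alpha_1}$; since $y\in B_t(x_0)$ was arbitrary, this is exactly $\sup_{B_t(x_0)}|Du|\le\overline{C}\,t^{\alpha_1}$, the asserted bound with $C_0'=\overline C$. For $t\in[\tfrac14,1)$ the estimate follows from the ambient $C^{1,\alpha_p}$ bound after enlarging the constant.

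The only delicate points are bookkeeping ones. First, verifying that the exponent $E$ of the rescaled source is nonnegative: this is precisely where the sharp threshold $\alpha_1<(m+1-\tfrac{n}{p})+(m+\gamma)\alpha$ enters, and it is what forces the reduction to $\alpha_1>\alpha$ at the outset. Second, confirming that $(\lambda,\Lambda)$-ellipticity and the continuity of the coefficients survive the two-sided rescaling $t^{1-\alpha_1}F(\cdot\,,t^{\alpha_1-1}\,\cdot\,)$, so that Caffarelli's estimate applies with universal constants on $B_1$. Neither is a genuine obstacle, but both must be handled with care; everything else is routine rescaling.
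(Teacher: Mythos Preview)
Your proof is correct and follows essentially the same strategy as the paper's: rescale by the improved decay rate $t^{1+\alpha_1}$, verify that the rescaled source stays $L^p$-bounded thanks to the threshold condition on $\alpha_1$, apply Caffarelli's interior $C^{1,\alpha_p}$ estimate, and unscale. The only cosmetic difference is that the paper centers the blow-up at $x_0$ (obtaining the gradient bound on $B_{t/2}$, then adjusting), whereas you center at an arbitrary $y\in B_t(x_0)$ and use $B_t(y)\subset B_{2t}(x_0)$; both are standard and yield the same conclusion.
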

\begin{proof}
Assume $x_0 = 0$. Define
$$
    v(x) \coloneqq \frac{u(tx)}{C_0 t^{1+\alpha_1}}.
$$
Observe that $v$ is a normalized solution to
$$
    \overline{F}(x,D^2v) = \overline{f}(x,v,Dv),
$$
where
\begin{eqnarray*}
    \overline{F}(z,M) &=& \frac{t^{1-\alpha_1}}{C_0} F(t\,z, \frac{C_0}{t^{1-\alpha_1}}M)\\
    \overline{f}(z,s,\xi) & = & \frac{t^{1-\alpha_1}}{C_0}f(tz, C_0t^{1+\alpha_1}s, C_0t^{\alpha_1}\xi).
\end{eqnarray*}
Observe that, by Assumption \ref{convexity},
\begin{eqnarray*}
    |\overline{f}(x,v,Dv)| & = & \frac{t^{1-\alpha_1}}{C_0}|f(tx, u(tx), Du(tx))|\\
    & \leq & \frac{t^{1-\alpha_1}}{C_0} q(tx)\, \left|u(tx) \right|^m\, |Du(tx)|^\gamma\\
    & \leq & \frac{C'^{m+\gamma}}{C_0}q(tx)\,t^{1-\alpha_1 + m(1+\alpha_1)+\gamma \alpha},
\end{eqnarray*}
and so
$$
    \|\overline{f}\|_{L^p} \leq \frac{C'^{m+\gamma}}{C_0} \|q\|_{L^p} \,t^{1-\alpha_1 + m(1+\alpha_1)+\gamma \alpha - \frac{n}{p}}.
$$
Notice that
$$
    1-\alpha_1 + m(1+\alpha_1)+\gamma \alpha - \frac{n}{p} > 0 \iff \left(m+1 - \frac{n}{p} \right) +\gamma \alpha > \alpha_1(1-m), 
$$
which is true by assumption on $\alpha_1$. As a consequence,
$$
    \|\overline{f}\|_{L^p} \leq \frac{C'^{m+\gamma}}{C_0} \|q\|_{L^p},
$$
and so, by \cite{Teix1}, it holds
$$
    \|v\|_{C^{1,\alpha_p}(B_{1/2})} \leq L,
$$
for some universal constant $L$ and  
$$
    \alpha_p = \min \left\{1 - \frac{n}{p}, \alpha_*^- \right\}.
$$
In particular,
$$
    |Dv(x)| \leq L,
$$
for $x \in B_{1/2}$, which is equivalently to
$$
    |Du(x)| \leq C_0 L t^{\alpha_1}.
$$
\end{proof}

We also deploy its second-order version.

\begin{lemma}\label{growth est sec-prd version}
Let $u \in C(\overline{B}_1)$ be a viscosity solution to \eqref{maineq} in $B_1$ and assume, for some $\alpha \in (0,1)$, that
$$
    \sup_{x \in B_t(x_0)}\left\{|u(x)|, t|Du(x)| \right\} \leq C't^{2+\alpha}.
$$
If $u$ satisfies
$$
    \sup_{x \in B_t(x_0)}|u(x)| \leq C_0 t^{2+\alpha_1} \quad \mbox{for} \quad \alpha_1 < \left(2m+\gamma -\frac{n}{p}\right) + \gamma\,\alpha,
$$
then
$$
    \sup_{x \in B_t(x_0)}|Du(x)| \leq C_0't^{1+\alpha_1},
$$
for $x_0 \in \mathcal{C}(u)$.
\end{lemma}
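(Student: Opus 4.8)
\emph{Proof proposal.} The plan is to reproduce, with the changes dictated by quadratic scaling, the rescaling argument from the proof of Lemma~\ref{growth implies grad est}. First I would reduce to $x_0 = 0 \in \mathcal{C}(u)$ by a translation. Fixing $t \in (0,1/2)$, I would set
$$
    v(x) \coloneqq \frac{u(tx)}{C_0\, t^{2+\alpha_1}}, \qquad x \in B_1,
$$
which is normalized, $\|v\|_{L^\infty(B_1)} \le 1$, by the hypothesis $\sup_{B_t(0)}|u| \le C_0 t^{2+\alpha_1}$. A direct computation shows $v$ is a viscosity solution of $\overline{F}(x,D^2 v) = \overline{f}(x,v,Dv)$ in $B_1$, with
$$
    \overline{F}(z,M) = \frac{t^{-\alpha_1}}{C_0}\, F\!\left(tz,\, C_0 t^{\alpha_1} M\right), \qquad \overline{f}(z,s,\xi) = \frac{t^{-\alpha_1}}{C_0}\, f\!\left(tz,\, C_0 t^{2+\alpha_1} s,\, C_0 t^{1+\alpha_1} \xi\right),
$$
and $\overline{F}$ again $(\lambda,\Lambda)$-elliptic.

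Next I would check that the rescaled right-hand side lies in a fixed $L^p$-ball. Using Assumption~\ref{convexity}, the improved control $|u(tx)| \le C_0 t^{2+\alpha_1}$ for the $|s|^m$ factor, and the hypothesis $|Du(tx)| \le C' t^{1+\alpha}$ for the $|\xi|^\gamma$ factor, one gets
$$
    |\overline{f}(x,v,Dv)| \le C\, q(tx)\, t^{\,m(2+\alpha_1) + \gamma(1+\alpha) - \alpha_1},
$$
and hence, after the change of variables $x \mapsto tx$,
$$
    \|\overline{f}\|_{L^p(B_1)} \le C\, \|q\|_{L^p(B_1)}\, t^{\,m(2+\alpha_1) + \gamma(1+\alpha) - \alpha_1 - \frac{n}{p}}.
$$
The exponent of $t$ is nonnegative exactly when $\alpha_1(1-m) \le \left(2m+\gamma-\tfrac{n}{p}\right) + \gamma\alpha$, which follows from the standing hypothesis $\alpha_1 < \left(2m+\gamma-\tfrac{n}{p}\right)+\gamma\alpha$ together with $m \ge 0$: when $m \le 1$ the left side is $\le \alpha_1$, and when $m > 1$ it is $\le 0$ while the right side is positive in the relevant range of $p$. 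Thus $\|\overline{f}\|_{L^p(B_1)}$ is bounded by a universal constant, uniformly in $t \in (0,1/2)$.

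Finally, since $v$ is a normalized viscosity solution of a uniformly elliptic equation with universally bounded $L^p$ source, the interior $C^{1,\alpha_p}$ estimate of \cite{Teix1} gives $\|v\|_{C^{1,\alpha_p}(B_{1/2})} \le L$ for a universal $L$, so $|Dv| \le L$ on $B_{1/2}$. Undoing the rescaling, $Du(tx) = C_0 t^{1+\alpha_1}\, Dv(x)$, which yields $\sup_{B_{t/2}(0)}|Du| \le C_0 L\, t^{1+\alpha_1}$; relabeling the radius and absorbing the factor $2^{1+\alpha_1}$ into the constant gives the asserted bound. The argument is entirely parallel to the first-order Lemma~\ref{growth implies grad est}; the only point requiring care is the exponent bookkeeping in the middle step, namely confirming that the hypothesis on $\alpha_1$ is precisely what prevents the rescaled source from concentrating as $t \to 0^+$, and since that is the verification carried out above, no genuine obstacle arises.
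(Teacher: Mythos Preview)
Your proposal is correct and follows essentially the same approach as the paper: the same rescaling $v(x)=u(tx)/(C_0 t^{2+\alpha_1})$, the same $L^p$-bookkeeping on the rescaled source leading to the exponent $m(2+\alpha_1)+\gamma(1+\alpha)-\alpha_1-\tfrac{n}{p}$, and the same appeal to the interior $C^{1,\alpha_p}$ estimate from \cite{Teix1}. Your case split on $m\le 1$ versus $m>1$ when checking $\alpha_1(1-m)\le (2m+\gamma-\tfrac{n}{p})+\gamma\alpha$ is a bit more explicit than the paper's, but the argument is otherwise identical.
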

\begin{proof}
The proof follows the same lines as the proof of Lemma \ref{growth implies grad est}. We can assume $x_0 = 0$. Define
$$
    v(x) \coloneqq \frac{u(tx)}{C_0 t^{2+\alpha_1}}.
$$
Observe that $v$ is a normalized solution to
$$
    \overline{F}(x,D^2v) = \overline{f}(x,v,Dv),
$$
where
\begin{eqnarray*}
    \overline{F}(z,M) &=& \frac{t^{-\alpha_1}}{C_0} F(t\,z, \frac{C_0}{t^{-\alpha_1}}M)\\
    \overline{f}(z,s,\xi) & = & \frac{t^{-\alpha_1}}{C_0}f(tz, C_0t^{2+\alpha_1}s, C_0t^{1+\alpha_1}\xi).
\end{eqnarray*}
As before, we can estimate
$$
    \|\overline{f}\|_{L^p} \leq \frac{C'^{m+\gamma}}{C_0} \|q\|_{L^p}t^{m(2+\alpha_1) + (1+\alpha)\gamma - \alpha_1 - \frac{n}{p}},
$$
and repeat the same arguments as in the proof of Lemma \ref{growth implies grad est} by noticing that
$$
    m(2+\alpha_1) + (1+\alpha)\gamma - \alpha_1 - \frac{n}{p} > 0 \iff \left(2m + \gamma - \frac{n}{p} \right) + \alpha \gamma > \alpha_1 (1-m),
$$
which is true by assumption on $\alpha_1$.
\end{proof}
%%%%%%%%%%%%%%%%%%%% REFERENCES
\bibliographystyle{amsplain, amsalpha}

\end{document}